\def\foo#1\endgraf\unskip#2\foo{\def\row@to@buffer{#1\endgraf\unskip\unskip#2}}
\newcommand{\N}{\mathbb{N}}
\newcommand{\Z}{\mathbb Z}
\newcommand{\C}{\mathbb C}
\newcommand{\Sim}{\mathsf{Sim}}
\renewcommand{\leq}{\leqslant}
\renewcommand{\geq}{\geqslant}
\newcommand{\diman}{\mathsf{diman}}
\renewcommand{\dim}{\mathsf{dim}}
\newcommand{\W}{\mathsf{W}}
\newcommand{\I}{\mathsf{I}}
\renewcommand{\P}{\mathsf{P}}
\newcommand{\GP}{\mathsf{GP}}
\newcommand{\GA}{\mathsf{GA}}
\newcommand{\an}{\mathsf{an}}
\newcommand{\R}{\mathbb{R}}
\renewcommand{\det}{\operatorname{\mathsf{det}}}
\renewcommand{\mod}{\operatorname{\,\mathsf{mod}}\,}
\numberwithin{equation}{section}
\newtheorem{thm}[equation]{Theorem}
\newtheorem*{thm*}{Theorem}
\newtheorem{prop}[equation]{Proposition}
\newtheorem*{prop*}{Proposition}
\newtheorem{cor}[equation]{Corollary}
\newtheorem*{cor*}{Corollary}
\newtheorem{conj}[equation]{Conjecture}
\newtheorem*{conj*}{Conjecture}
\newtheorem{lem}[equation]{Lemma}
\newtheorem*{lem*}{Lemma}
\theoremstyle{definition}
\newtheorem{de}[equation]{Definition}
\newtheorem*{de*}{Definition}
\newtheorem{rem}[equation]{Remark}
\newtheorem*{rem*}{Remark}
\renewenvironment{proof}{\par\noindent {\em Proof: }}{\hfill$\Box$\medskip}
\theoremstyle{plain}
\newcommand{\Pfister}[1]{\langle\!\langle #1\rangle\!\rangle}
\newcommand{\laurent}[1]{(\!(#1)\!)}
\newcommand{\qf}[1]{\langle #1\rangle}
\newcommand{\bigperp}{%
  \mathop{\mathpalette\bigp@rp\relax}%
  \displaylimits
}
\newcommand{\bigp@rp}[2]{%
  \vcenter{
    \m@th\hbox{\scalebox{\ifx#1\displaystyle2.1\else1.5\fi}{$#1\perp$}}
  }%
}
\title{On Generalised Albert Forms over Discretely Valued Fields}
\date{\today}
\author{Nico Lorenz}
\address{Fakult\"at f\"ur Mathematik, Ruhr-Universit\"at Bochum, Universit\"atsstra\ss e 150, 44801 Bochum, Deutschland}
\email{nico.lorenz@ruhr-uni-bochum.de}
\begin{document}

\begin{abstract}\noindent
    For a discrete valuation ring $R$ with quotient field $K$ and residue field $F$ both of characteristic not 2, we study low-dimensional quadratic forms with Witt class in the $n$-th power of the fundamental ideal of $F$ resp. $K$ and point out connections between forms over these fields.
    We analyse the minimal number of Pfister forms such that a given form is Witt equivalent to the sum of these and study forms congruent modulo a higher power of the fundamental ideal towards similarity.
\noindent

\medskip\noindent
{\sc{Classification (MSC 2020): 11E81, 12J10} }

\medskip\noindent
{\sc{Keywords: Witt rings, Quadratic Forms, Pfister Forms, Generalised Albert Forms, Discrete Valuation}} 

\end{abstract}
\maketitle

\section{Introduction}

Let $F$ be a field of characteristic not 2. 
The investigation of low dimensional quadratic forms $\varphi$ whose Witt class $[\varphi]$ lies in $\I^n(F)$, the $n$-th power of the fundamental ideal $\I(F)$ is a longstanding problem in the algebraic theory of quadratic forms.

The first breakthrough was attended by J. K. Arason and A. Pfister with the proof of their \emph{Hauptsatz}, which gives a lower bound on the dimension of such forms.

\begin{thm}[Arason-Pfister Hauptsatz, {\cite[Hauptsatz]{ArasonPfister71}}]\label{thm:APH}
    Let $\varphi$ be an anisotropic quadratic form of positive dimension with $[\varphi]\in\I^n(F)$.
    Then $\dim(\varphi)\geq 2^n$ and we have equality if and only if $\varphi$ is similar to a Pfister form.
\end{thm}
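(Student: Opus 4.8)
The plan is to deduce the Hauptsatz from Pfister's structure theory of Pfister forms together with the Cassels--Pfister subform theorem, using the elementary fact that $\I^n(F)$ is generated as an abelian group by the Witt classes $[\qf{c}\otimes\pi]$ with $c\in\mg F$ and $\pi$ an $n$-fold Pfister form (assume $n\geq 1$, the case $n=0$ being immediate). One direction of the equality assertion is trivial and I would dispose of it first: if $\varphi$ is similar to an anisotropic $n$-fold Pfister form $\pi$, then $\varphi$ is anisotropic, $\dim(\varphi)=\dim(\pi)=2^n$, and $[\varphi]\in\I^n(F)$ since $\I^n(F)$ is an ideal containing $[\pi]$. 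It thus remains to prove \textbf{(a)}: every anisotropic $\varphi$ with $0\neq[\varphi]\in\I^n(F)$ satisfies $\dim(\varphi)\geq 2^n$; and \textbf{(b)}: if moreover $\dim(\varphi)=2^n$, then $\varphi$ is similar to an $n$-fold Pfister form.

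For \textbf{(a)} I would induct on the least number $m$ of terms in an expression $[\varphi]=\sum_{i=1}^m[\qf{c_i}\otimes\pi_i]$ of the above type; after discarding the hyperbolic $\pi_i$ all $\pi_i$ may be taken anisotropic, and $m\geq 1$ because $\varphi\neq 0$. If $m=1$ then $\varphi\cong\qf{c_1}\otimes\pi_1$ has dimension $2^n$. If $m\geq 2$, pass to $L=F(\pi_1)$, the function field of the projective quadric defined by $\pi_1$. Since an isotropic Pfister form is hyperbolic, $(\pi_1)_L$ is hyperbolic, so $[\varphi_L]=\sum_{i=2}^m[\qf{c_i}\otimes(\pi_i)_L]$ is a sum of at most $m-1$ terms of this type over $L$. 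If $\varphi_L$ is not hyperbolic, write $\psi$ for its anisotropic part; then $\psi\neq 0$, $[\psi]\in\I^n(L)$, and $\psi$ is expressible with fewer than $m$ such terms, so $\dim(\psi)\geq 2^n$ by the induction hypothesis, whence $\dim(\varphi)\geq\dim(\psi)\geq 2^n$. If $\varphi_L$ is hyperbolic, then, $\pi_1$ being anisotropic over $F$ while $\varphi$ becomes hyperbolic over $F(\pi_1)$, the Cassels--Pfister subform theorem shows that $\qf{a}\otimes\pi_1$ is a subform of $\varphi$ for some $a\in\mg F$, so once more $\dim(\varphi)\geq\dim(\pi_1)=2^n$.

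For \textbf{(b)}, assume $\dim(\varphi)=2^n$ and, after replacing $\varphi$ by a similar form, that $1\in D(\varphi)$. Given $a\in D(\varphi)$, the form $\varphi\perp(-\qf{a}\otimes\varphi)$ is isotropic, since it represents $a+(-a)=0$; it has dimension $2^{n+1}$, and its Witt class equals $[\varphi]\cdot[\qf{1,-a}]$, which lies in $\I^{n+1}(F)$. Hence its anisotropic part lies in $\I^{n+1}(F)$ and has dimension $<2^{n+1}$, so it vanishes by part \textbf{(a)} applied at level $n+1$; therefore $\varphi\cong\qf{a}\otimes\varphi$. Thus $D(\varphi)\subseteq G(\varphi)$, and as $1\in D(\varphi)$ the reverse inclusion holds too, so $\varphi$ is round. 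An anisotropic round form whose dimension is a power of $2$ is a Pfister form, so $\varphi$, and hence the original form, is similar to an $n$-fold Pfister form.

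The technical heart of the argument is the Cassels--Pfister subform theorem; granting it, together with the basic structural facts it rests on---that Pfister forms are round and that an isotropic Pfister form is hyperbolic---the remainder is a short induction on the number of Pfister terms plus routine bookkeeping, together with the classical identification of round forms of $2$-power dimension with Pfister forms. I therefore expect the only genuine obstacle, in making this self-contained, to be the proof of the subform theorem itself. (The original argument of Arason and Pfister in \cite{ArasonPfister71} instead obtains the dimension bound via iterated Laurent series extensions and Springer's theorem on complete discretely valued fields, which avoids function fields of quadrics at the cost of heavier valuation-theoretic input.)
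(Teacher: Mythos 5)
This theorem is quoted in the paper from Arason--Pfister without proof, so there is no in-paper argument to compare against; I will therefore assess your proposal on its own terms. Your part \textbf{(a)} is the standard modern function-field proof (induction on the number of Pfister generators, passing to $F(\pi_1)$, and invoking the subform theorem in the hyperbolic case) and is correct as written, provided you state the induction hypothesis uniformly over all fields, since the base field changes to $L=F(\pi_1)$ in the inductive step. Your closing historical remark is also accurate: the original argument proceeds via iterated Laurent series and Springer's theorem rather than function fields of quadrics.

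Part \textbf{(b)}, however, has a genuine gap. The derivation that $\varphi$ is round (i.e.\ $D(\varphi)=G(\varphi)$) from part (a) applied at level $n+1$ is correct, but the lemma you then invoke --- that an anisotropic round form of dimension $2^n$ is a Pfister form --- is false. For a counterexample take $F=\qq$ and $q\cong\qf{1,1,1,1,1,1,14,14}$. This form is positive definite, hence anisotropic, of dimension $8$ with square determinant; by the Hasse--Minkowski classification one checks $aq\cong q$ for every $a\in\qq_{>0}$, while $D(q)=\qq_{>0}$ because $q$ is universal at every finite place, so $q$ is round and represents $1$. Yet $[q]-[\qf{1,\ldots,1}]=-[\Pfister{-1,14}]$ is a nonzero class in $\I^2(\qq)\setminus\I^3(\qq)$ (the quaternion algebra $(-1,14)$ is ramified at $2$ and $7$), so $[q]\notin\I^3(\qq)$ and $q$ is not similar to any $3$-fold Pfister form. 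Roundness over the base field alone therefore cannot suffice; it is the extra hypothesis $[\varphi]\in\I^n(F)$, which you stop using at exactly this point, that must carry the argument home.

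The repair is small. Your roundness argument applies verbatim over every field extension $K/F$: if $\varphi_K$ is anisotropic it is round by the same computation, and if $\varphi_K$ is isotropic then part (a) forces it to be hyperbolic, hence trivially round. A form representing $1$ that is round over all extensions is multiplicative, and Pfister's theorem on multiplicative forms then yields that $\varphi$ is a Pfister form. Alternatively, and more economically, apply part (a) over $K=F(\varphi)$ to conclude that $\varphi_{F(\varphi)}$ is hyperbolic, and invoke the standard characterisation (itself a consequence of the subform and norm theorems you are already relying on) of anisotropic forms that become hyperbolic over their own function field as precisely the forms in $\bigcup_m\GP_m(F)$. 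Either route completes the proof with tools no heavier than the Cassels--Pfister machinery you have already granted yourself.
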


In order to find other possible dimensions of anisotropic forms with Witt class in $\I^n(F)$, the concept of \emph{linkage}, first introduced in \cite{MR0283004}, is very fruitful.
It determines the dimensions of forms that are Witt equivalent to a sum of two $n$-fold Pfister forms.
For later use, we cite a more general version that includes Pfister forms of different foldness.

\begin{thm}{\cite[Lemma 3.2]{HoffmannTwistedPfister}}\label{thm:LinkTwoPfisterForms}
	Let $\sigma\in \P_n(F)$ and $\pi\in \P_m(F)$ be anisotropic Pfister forms for some $m,n\in\N$ with $1\leq m\leq n$ and $a,b\in F^\ast$. 
    For the Witt index $i_W$, we then have
	\[i:=i_W(a\sigma\perp b\pi)\in\{0\}\cup\{2^r\mid 0\leq r\leq m\}.\]
	Further, we have $i\geq1$ if and only if there is some $x\in F^\ast$ with 
	\[(a\sigma\perp b\pi)_\text{an}\cong x(\sigma\perp-\pi)_\text{an}.\]
	If $i=2^r\geq1$ then there exist $\alpha\in \P_r(F), \sigma_1\in \P_{n-r}(F)$ and $\pi_1\in \P_{m-r}(F)$ such that we have
	\[\sigma\cong\alpha\otimes\sigma_1\text{   and   }\pi\cong\alpha\otimes\pi_1.\]
	In this case, $r$ is called the \emph{linkage number} of $\sigma$ and $\pi$ and $\alpha$ is called a \emph{link} of $\sigma$ and $\pi$. 
    If we have $n=m$ and $r\geq n-1$, we say $\sigma$ and $\pi$ are \emph{linked}.
\end{thm}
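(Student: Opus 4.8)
The plan is to reduce the whole statement, via roundness, to the computation of the single number $i_W(\sigma\perp-\pi)$, and then to determine this number by extracting a maximal common Pfister divisor of $\sigma$ and $\pi$. Throughout, the only external inputs I would use are that anisotropic Pfister forms are round, the pure subform theorem, and the value-set identity $D(\alpha\otimes\psi)=D(\alpha)\cdot D(\psi)$ for a Pfister form $\alpha$.

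If $a\sigma\perp b\pi$ is anisotropic there is nothing to prove, so assume it is isotropic. As $a\sigma$ and $b\pi$ are anisotropic, there is a common value $c\in D(a\sigma)\cap D(-b\pi)$; roundness makes $D(\sigma)$ and $D(\pi)$ into groups, so $c\sigma\cong a\sigma$ and $-c\pi\cong b\pi$, whence $a\sigma\perp b\pi\cong c(\sigma\perp-\pi)$. This is one implication in the second assertion (with $x=c$); for the converse I would note that $\sigma\perp-\pi$ contains $\qf{1}\perp\qf{-1}\cong\hh$, so $\dim(\sigma\perp-\pi)_{\an}<2^n+2^m=\dim(a\sigma\perp b\pi)$, and then an isometry $(a\sigma\perp b\pi)_{\an}\cong x(\sigma\perp-\pi)_{\an}$ forces $a\sigma\perp b\pi$ to be isotropic. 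Since the Witt index is a similarity invariant, in the isotropic case $i=i_W(\sigma\perp-\pi)$; so it remains to prove that $i_W(\sigma\perp-\pi)=2^r$, where $r$ is the linkage number — which exists because $\qf{1}$ is a common divisor, and satisfies $r\leq m$ because any common Pfister divisor divides $\pi\in\P_m(F)$.

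Now fix a common Pfister divisor $\alpha\in\P_r(F)$ of maximal fold and write $\sigma\cong\alpha\otimes\sigma_1$, $\pi\cong\alpha\otimes\pi_1$ with $\sigma_1\in\P_{n-r}(F)$, $\pi_1\in\P_{m-r}(F)$. If $\sigma_1$ and $\pi_1$ had a common $1$-fold Pfister divisor $\Pfister{d}$, then $\alpha\otimes\Pfister{d}$ — which is anisotropic, being a divisor of $\sigma$, hence a genuine $(r+1)$-fold Pfister form — would be a common divisor of $\sigma$ and $\pi$, contradicting maximality of $r$; so $\sigma_1$ and $\pi_1$ have no common $1$-fold divisor. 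By the pure subform theorem this is equivalent to $\sigma_1'\perp-\pi_1'$ being anisotropic, where $\sigma_1',\pi_1'$ are the pure parts, i.e.\ to $i_W(\sigma_1\perp-\pi_1)=1$. Hence
\[\sigma\perp-\pi\;\cong\;\alpha\otimes(\sigma_1\perp-\pi_1)\;\cong\;(\alpha\otimes\hh)\perp\bigl(\alpha\otimes(\sigma_1'\perp-\pi_1')\bigr),\]
and since $\alpha\otimes\hh$ is hyperbolic of Witt index $2^r$, everything comes down to showing that $\alpha\otimes(\sigma_1'\perp-\pi_1')$ is anisotropic.

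This last step is the crux, and I expect it to be the only genuinely non-formal part; the rest is bookkeeping. The forms $\alpha\otimes\sigma_1'$ and $\alpha\otimes\pi_1'$ are subforms of the anisotropic forms $\sigma$ and $\pi$, so are anisotropic, and if $\alpha\otimes(\sigma_1'\perp-\pi_1')$ were isotropic they would share a value $e$. Using $D(\alpha\otimes\psi)=D(\alpha)\cdot D(\psi)$ I would write $e=uv=u'v'$ with $u,u'\in D(\alpha)$, $v\in D(\sigma_1')$, $v'\in D(\pi_1')$. The pure subform theorem then gives $\Pfister{-v}\mid\sigma_1$ and $\Pfister{-v'}\mid\pi_1$, hence $\alpha\otimes\Pfister{-v}\mid\sigma$ and $\alpha\otimes\Pfister{-v'}\mid\pi$; and roundness of $\alpha$, together with $u,u'\in D(\alpha)$, yields $\alpha\otimes\Pfister{-v}\cong\alpha\otimes\Pfister{-e}\cong\alpha\otimes\Pfister{-v'}$. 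So $\alpha\otimes\Pfister{-e}$ is a Pfister form of dimension $2^{r+1}$ that divides both $\sigma$ and $\pi$; being a divisor of $\sigma$ it is anisotropic, hence of fold exactly $r+1$, and this contradicts the maximality of $r$. Therefore $\alpha\otimes(\sigma_1'\perp-\pi_1')$ is anisotropic, $i_W(\sigma\perp-\pi)=2^r$, and $\alpha,\sigma_1,\pi_1$ furnish the required factorisation. The point to be careful about is exactly the last one: that $\alpha\otimes\Pfister{-e}$ genuinely is an anisotropic $(r+1)$-fold Pfister form, so that it really does violate the maximality defining $r$.
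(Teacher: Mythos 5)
Your overall architecture is the standard one for this result: reduce via roundness to computing $i_W(\sigma\perp-\pi)$, extract a maximal common Pfister divisor $\alpha\in\P_r(F)$, split off $\alpha\otimes\hh$, and show that the remainder $\alpha\otimes(\sigma_1'\perp-\pi_1')$ is anisotropic. The first two assertions are proved correctly, and the surrounding bookkeeping (including the implicit use that a Pfister divisor of an anisotropic Pfister form admits a Pfister complement) is fine. However, the step you yourself single out as the crux rests on a false lemma: the value-set identity $D(\alpha\otimes\psi)=D(\alpha)\cdot D(\psi)$ fails even for anisotropic Pfister forms $\alpha$. Over $\mathbb{Q}$ take $\alpha=\psi=\qf{1,1}$: then $7\in D(\qf{1,1,1,1})$ since $7=4+1+1+1$, whereas $D(\qf{1,1})\cdot D(\qf{1,1})$ consists only of sums of two squares by the two-square identity, and $7$ is not a sum of two rational squares. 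Only the inclusion $D(\alpha)\cdot D(\psi)\supseteq$ is formal; the one you use is not available.

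What your argument actually needs is the implication: $e\in D(\alpha\otimes\sigma_1')$ forces $\alpha\otimes\Pfister{-e}$ to divide $\sigma\cong\alpha\otimes\sigma_1$. This implication is true, but it is precisely the non-formal content of the linkage theorem --- the \emph{relative} pure subform theorem of Elman and Lam (\emph{Pfister forms and $K$-theory of fields}, \S 2), which is also the engine behind the source cited in the statement. It does not follow from roundness together with the ordinary pure subform theorem applied to $\sigma_1$, because an element of $D(\alpha\otimes\sigma_1')$ need not factor as a value of $\alpha$ times a value of $\sigma_1'$. So there is a genuine gap exactly where you predicted the difficulty would lie; to close it you must either prove the relative pure subform theorem directly or cite it, rather than derive it from the (false) multiplicativity of value sets.
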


\Cref{thm:LinkTwoPfisterForms} shows that anisotropic forms with Witt class in $\I^n(F)$ that are Witt equvialent to a sum of two $n$-fold Pfister forms have dimension $2^{n+1}$ or $2^{n+1}-2^k$ for some $k\in\{1,\ldots, n+1\}$.
It is natural to ask whether these are all dimensions that can be realized for $\I^n$-forms of dimension $\leq 2^{n+1}$.
In particular, in addition to the first gap $(0, 2^n)$ provided by the Arason-Pfister Hauptsatz \ref{thm:APH}, the second gap $(2^n, 2^n+2^{n-1})$ was subject to research for a long time. 
While the cases for $n\leq2$ are clear, Pfister showed that this is a true gap for $n=3$ already in \cite[Satz 14]{Pfister}.
This was extended to $n=4$ by D. Hoffmann in \cite[Main Theorem]{HoffmannI4}, then for all $n$ for fields of characteristic 0 by A. Vishik in \cite{MR1788358}.
It could be verified for all characteristics $\neq 2$ by N. Karpenko in \cite{MR2058676} that there are no anisotropic quadratic forms of dimension between $2^n$ and $2^{n}+2^{n-1}$ whose Witt class lies in $\I^n(F)$.

Finally, the complete classification of possible dimensions of forms in $\I^n(F)$ was given again by N. Karpenko, showing that the ones prescibed by \Cref{thm:LinkTwoPfisterForms} are exactly the ones that occur:

\begin{thm}[Gap Theorem {\cite{KarpenkoHoles}}]\label{thm:Gaps}
    Let $\varphi$ an anisotropic quadratic form with $\dim(\varphi)<2^{n+1}$ and $[\varphi]\in\I^n(F)$.
    Then there is some $k\in\{1,\ldots, n+1\}$ with $\dim(\varphi)=2^{n+1}-2^k$.
\end{thm}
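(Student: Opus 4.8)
The plan is to argue by a minimal counterexample: first I would use generic splitting to convert the statement into a constraint on the first Witt indices (equivalently, the splitting patterns) of forms in $\I^n$, and then invoke the Chow–theoretic machinery for quadrics that supplies that constraint. The cases $n\le 2$ are classical --- in $\I^2(F)$ the only anisotropic forms of dimension below $8$ are the $4$-dimensional ones of trivial discriminant and the $6$-dimensional Albert forms, which are exactly the dimensions $0,4,6$ --- and the first gap $(0,2^n)$ is precisely the Arason--Pfister Hauptsatz, \Cref{thm:APH}. So fix $n\ge 3$ and suppose, towards a contradiction, that there is an anisotropic $\varphi$ over some field $F$ with $[\varphi]\in\I^n(F)$, $2^n<\dim\varphi<2^{n+1}$, and $\dim\varphi\neq 2^{n+1}-2^k$ for every $k$; among all such examples, over all fields, choose one with $\dim\varphi$ minimal. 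The admissible dimensions strictly between $2^n$ and $2^{n+1}$ are exactly the numbers $2^{n+1}-2^k$ with $1\le k\le n-1$, so $\dim\varphi$ lies in one of the gaps between two consecutive admissible values, or in the top interval $(2^{n+1}-2,\,2^{n+1})$.

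Now pass to the function field $L=F(\varphi)$ of the projective quadric of $\varphi$ and put $\varphi_1=(\varphi_L)_{\mathrm{an}}$, so that $[\varphi_1]\in\I^n(L)$ and $\dim\varphi_1=\dim\varphi-2i_1(\varphi)$, where $i_1(\varphi)=i_W(\varphi_L)$ is the first Witt index. If $\varphi_1$ is hyperbolic, the Hauptsatz forces $\varphi$ to be similar to an $n$-fold Pfister form, contradicting $\dim\varphi>2^n$. Otherwise $\varphi_1$ is anisotropic of positive dimension, hence $2^n\le\dim\varphi_1<\dim\varphi<2^{n+1}$ (the lower bound is again the Hauptsatz, now over $L$), and by minimality $\dim\varphi_1$ must be admissible. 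Two consequences follow. First, since every admissible dimension $\ge 2^n$ is even, $\dim\varphi$ is even; this already disposes of every odd dimension in the forbidden range, with no further input. Second, for the remaining (even) dimensions, $2i_1(\varphi)=\dim\varphi-\dim\varphi_1$ is exactly the distance from $\dim\varphi$ to an admissible value, and iterating the argument up the generic splitting tower pins the whole splitting pattern of $\varphi$ down to a rigid prescribed shape. The proof thus reduces to the assertion that \emph{no anisotropic form in $\I^n$ can have first Witt index --- equivalently, splitting pattern --- of the prescribed type}.

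This last reduction is \textbf{the main obstacle}, and it is exactly the point where the elementary theory of quadratic forms is insufficient and the algebraic geometry of quadrics becomes indispensable. The required control of $i_1(\varphi)$ for $\varphi\in\I^n$ comes from Karpenko's theorem on the first Witt index of an anisotropic form, sharpened by Vishik's results on the possible splitting patterns of forms in $\I^n$. These in turn rest on the structure of the torsion in the modulo-$2$ Chow groups $\mathrm{Ch}^*(X_\varphi)$ of the quadric of $\varphi$ and of the associated maximal orthogonal Grassmannian, on the action of the Steenrod operations on those groups, on Rost's degree formula, and on Vishik's analysis of the motivic decompositions of quadrics; the conclusion one extracts is that a form in $\I^n$ whose Witt tower would have to pass through a dimension strictly between two consecutive admissible values must carry non-trivial $F$-rational cycles on $X_\varphi$ incompatible with $\varphi$ being anisotropic --- the contradiction sought. (In particular this argument subsumes the earlier partial results of Pfister, Hoffmann, Vishik and Karpenko on the second gap $(2^n,2^n+2^{n-1})$.)

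Granting this geometric input, only bookkeeping remains: one checks that the equation $2i_1(\varphi)=\dim\varphi-2^{n+1}+2^{k'}$ coming from $\dim\varphi_1=2^{n+1}-2^{k'}$ (together with its analogues higher up the splitting tower) admits no solution compatible with the permitted range of $i_1$, treating separately the boundary cases and, for the dimensions closest to $2^{n+1}$, invoking \Cref{thm:LinkTwoPfisterForms} to see directly which anisotropic sums of two $n$-fold Pfister forms occur and that no extra dimension slips through. The delicate part is genuinely the Chow-theoretic bound on $i_1$; everything around it is soft.
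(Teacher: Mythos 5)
This theorem is not proved in the paper at all: it is quoted verbatim from Karpenko's article \cite{KarpenkoHoles}, whose proof occupies a substantial paper of its own. Judged as a stand-alone attempt, your proposal has a genuine gap. The minimal-counterexample reduction through the generic splitting tower is sound as far as it goes, but it accomplishes very little: the parity observation is vacuous, since $[\varphi]\in\I(F)$ already forces $\dim\varphi$ to be even, and the remaining content of the reduction is simply a restatement of the theorem as a constraint on the first Witt indices of anisotropic forms in $\I^n(F)$. That restated constraint \emph{is} the theorem, and you do not prove it; you assert that it ``comes from Karpenko's theorem on the first Witt index, sharpened by Vishik's results on splitting patterns.'' Neither input suffices. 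Karpenko's first-Witt-index theorem bounds $i_1(\varphi)$ in terms of $\dim\varphi$ alone and is blind to the hypothesis $[\varphi]\in\I^n(F)$, while the earlier results of Pfister, Hoffmann, Vishik and Karpenko cited in the introduction close only the second gap $(2^n,2^n+2^{n-1})$ (Vishik's general argument moreover being restricted to characteristic $0$). Combining such inputs into the full statement in all characteristics $\neq 2$ is exactly the achievement of \cite{KarpenkoHoles}, via rational cycles on the highest orthogonal Grassmannian and Steenrod operations on its Chow group modulo $2$; the sentence in which you say these cycles are ``incompatible with $\varphi$ being anisotropic'' is the entire proof, left as a black box.

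So the honest summary is: your outline correctly locates where the difficulty sits and matches the broad shape of Karpenko's argument, but everything you actually carry out is soft and essentially free, and the theorem itself is cited rather than derived. For the purposes of this paper that is in fact the intended treatment --- the result is used as an external black box, e.g.\ in \Cref{prop:GAclassification} and \Cref{thm:GAUp} --- but a proof attempt must either reproduce the Chow-theoretic core or explicitly declare the result as an imported theorem rather than dress the citation as a deduction.
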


After having classified all the possible dimensions, it is natural to ask for further structural descriptions of such forms.
Again, this was already done by A. Pfister for $n=3$ resp. D. Hoffmann for $n=4$.

\begin{thm}[{\cite[Satz 14, Zusatz]{Pfister}}]
	Let $\varphi$ be a quadratic form with $\dim(\varphi)=12$ and $[\varphi]\in \I^3(F)$. 
    Then there are $x\in F^\ast$ and an Albert form $\alpha$ over $F$ with $\varphi\cong\Pfister x\otimes\alpha$.
    In particular, $\varphi$ is Witt equivalent to a scalar multiple of a sum of two Pfister forms.
\end{thm}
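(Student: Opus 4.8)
First I would reduce to the case that $\varphi$ is anisotropic. If $\varphi$ is isotropic, then $[\varphi_{\an}]\in\I^3(F)$ with $\dim\varphi_{\an}\leq 10$, so \Cref{thm:Gaps} forces $\dim\varphi_{\an}\in\{0,8\}$. If $\dim\varphi_{\an}=0$ then $\varphi$ is hyperbolic and $\varphi\cong\Pfister 1\otimes(3\hh)$, where $3\hh$ is a (split) Albert form; if $\dim\varphi_{\an}=8$ then \Cref{thm:APH} gives $c\in F^\ast$ and $\pi\in\P_3(F)$ with $\varphi_{\an}\cong c\pi$, and writing $\pi\cong\Pfister a\otimes\rho$ with $\rho\in\P_2(F)$ one checks that $\varphi\cong c\pi\perp 2\hh\cong\Pfister a\otimes(c\rho\perp\hh)$ and that $c\rho\perp\hh$ is a $6$-dimensional form of trivial signed discriminant, i.e.\ an Albert form. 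So assume from now on that $\varphi$ is anisotropic of dimension $12$; note that $[\varphi]\notin\I^4(F)$ by \Cref{thm:APH}, since $\dim\varphi<2^4$.

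The plan is to study $\varphi$ via its generic splitting and then descend. Put $K=F(\varphi)$. Over $K$ the form $\varphi$ becomes isotropic, so $[\varphi_K]\in\I^3(K)$ with $\dim(\varphi_K)_{\an}\leq 10$, and \Cref{thm:Gaps} again gives $\dim(\varphi_K)_{\an}\in\{0,8\}$. The value $0$ is impossible, since an anisotropic form that becomes hyperbolic over its own function field is similar to a Pfister form, hence of $2$-power dimension. Thus $\dim(\varphi_K)_{\an}=8$, and by the equality case of \Cref{thm:APH} there are $c\in K^\ast$ and $\tau\in\P_3(K)$ with $(\varphi_K)_{\an}\cong c\tau$; that is, $\varphi$ has height $2$ and degree $3$. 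The core of the argument is then to descend this to $F$: one wants $3$-fold Pfister forms $\sigma,\pi\in\P_3(F)$ and $b\in F^\ast$ with $[\varphi]=b\bigl([\sigma]-[\pi]\bigr)$ in $\W(F)$, equivalently $\varphi\cong b(\sigma\perp-\pi)_{\an}$. I would attempt this either through the classification of anisotropic forms of height $2$, using the Witt-index restrictions of \Cref{thm:LinkTwoPfisterForms} to pin down the possible shapes of the higher kernel forms of $\varphi$, or cohomologically, by arguing that the Arason invariant $e_3(\varphi)\in H^3(F,\mu_2)$ — which restricts over $K$ to the single symbol $e_3(\tau)$ — is already over $F$ a sum of two symbols with a common slot. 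This descent is the step I expect to be the real obstacle; Pfister's original route instead manipulates $[\varphi]$ directly in $\W(F)$ using roundness and multiplicativity of Pfister forms.

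Granting a presentation $\varphi\cong b(\sigma\perp-\pi)_{\an}$ with $\sigma,\pi\in\P_3(F)$, the conclusion is immediate from \Cref{thm:LinkTwoPfisterForms}. Since $\varphi$ is anisotropic of dimension $12$ we get $\dim(\sigma\perp-\pi)_{\an}=12$, so $i_W(\sigma\perp-\pi)=2$, whence the linkage number of $\sigma$ and $\pi$ is $1$: there are $x\in F^\ast$ and $\rho_1,\rho_2\in\P_2(F)$ with $\sigma\cong\Pfister x\otimes\rho_1$, $\pi\cong\Pfister x\otimes\rho_2$. Then $\sigma\perp-\pi\cong\Pfister x\otimes(\rho_1\perp-\rho_2)$ and $[\rho_1\perp-\rho_2]=[\rho_1]-[\rho_2]\in\I^2(F)$. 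As $\rho_1\perp-\rho_2$ is isotropic, $\alpha_0:=(\rho_1\perp-\rho_2)_{\an}$ has dimension at most $6$; but $(\Pfister x\otimes\alpha_0)_{\an}=(\sigma\perp-\pi)_{\an}$ has dimension $12$, forcing $\dim\alpha_0=6$ and $\Pfister x\otimes\alpha_0$ anisotropic. Hence $\alpha_0$ is a $6$-dimensional form with Witt class in $\I^2(F)$, i.e.\ an Albert form, and $\varphi\cong b\,(\Pfister x\otimes\alpha_0)=\Pfister x\otimes\alpha$ with $\alpha:=b\alpha_0$ again an Albert form. Finally $\alpha$ is Witt equivalent to $b\rho_1\perp-b\rho_2$, so $\varphi$ is Witt equivalent to $b\sigma\perp-b\pi$, a scalar multiple of a sum of two Pfister forms, as claimed.
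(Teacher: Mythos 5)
Your reduction to the anisotropic case and your concluding linkage computation are both correct, but the argument has a genuine gap exactly where you flag it: the descent producing $\sigma,\pi\in\P_3(F)$ and $b\in F^\ast$ with $[\varphi]=b([\sigma]-[\pi])$ is never carried out, and this is not a technical detail but the entire content of the theorem. By \Cref{prop:GAclassification} (applied with $n=3$), the existence of such a presentation is \emph{equivalent} to the conclusion $\varphi\cong\Pfister{x}\otimes\alpha$; your final paragraph in effect re-proves the implication \ref{prop:GAclassification1}$\Rightarrow$\ref{prop:GAclassification2} of that proposition, so what you have established is a reformulation ($\GP_3(\varphi)\le 2$, equivalently divisibility of $\varphi$ by a binary form, equivalently containing a subform in $\GP_2(F)$) rather than the theorem itself. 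The generic splitting analysis does not supply the missing step: knowing that $(\varphi_K)_{\an}\cong c\tau$ with $\tau\in\P_3(K)$ for $K=F(\varphi)$ shows that $\varphi$ has height $2$ and degree $3$, but the leading form lives over the function field and none of the results quoted in this paper provides a descent to $F$. Of the two routes you sketch, the classification of height-$2$, degree-$3$ forms of dimension $12$ is precisely the statement to be proved, and the cohomological route presupposes that $e_3(\varphi)$ is a sum of two symbols with a common slot, which is again the theorem in disguise.

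Note also that the paper offers no proof of this statement; it is quoted from Pfister's Satz 14 and its Zusatz, so there is no in-text argument to compare yours against. Pfister's own proof is, as you suspect, a direct manipulation in $\W(F)$ using the multiplicativity and roundness of Pfister forms applied to well-chosen subforms of $\varphi$, with no recourse to function fields; any complete write-up must supply an argument of that kind (or an equivalent index computation for the Clifford algebra) for the core divisibility statement. As it stands, the proposal correctly handles the formal reductions at either end but leaves the mathematical heart of the theorem unproved.
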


\begin{thm}[{\cite{HoffmannI4}}]
    Let $\varphi$ be an anisotropic form of dimension $\dim(\varphi)=24$ with $[\varphi]\in\I^4(F)$. 
    Then, there exists a Pfister form $\pi\in\GP_2(F)$, an Albert form $\alpha$ and $\rho\in\GP_4(F)$ such that for the Witt classes, we have
    \[[\varphi]=[\pi\otimes\alpha]+[\rho].\]
\end{thm}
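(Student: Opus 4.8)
The plan is to first determine the generic splitting behaviour of $\varphi$, then to reduce the assertion to the construction of two Pfister forms over $F$, and finally to carry out that construction, which is where the real difficulty lies.

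\emph{Step 1: the splitting pattern.} First I would observe that, since $[\varphi]\in\I^4(F)$ and $\dim(\varphi)=24<2^5$, the restriction $\varphi_{F(\varphi)}$ is again in $\I^4$, so by \Cref{thm:Gaps} the anisotropic part $\varphi_1$ of $\varphi_{F(\varphi)}$ has dimension in $\{0,16\}$ (it is $<24$ because $\varphi$ is isotropic over its own function field). The value $0$ is excluded, because an anisotropic form whose dimension is not a power of $2$ cannot become hyperbolic over its own function field — otherwise it would be similar to a Pfister form. Hence $\dim(\varphi_1)=16$, and by the equality case of \Cref{thm:APH} the form $\varphi_1$ lies in $\GP_4(F(\varphi))$. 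Thus $\varphi$ has splitting pattern $(24,16,0)$: it is a form of height $2$ and degree $4$ whose leading form is a general $4$-fold Pfister form over $F(\varphi)$.

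\emph{Step 2: reduction to a construction over $F$.} Next I would use the standard facts that $\ker\bigl(\W(F)\to\W(F(\pi))\bigr)=[\pi]\cdot\W(F)$ for an anisotropic Pfister form $\pi$, and that an anisotropic form which becomes hyperbolic over $F(\pi)$ is of the shape $\pi\otimes\gamma$. With these, it suffices to find $\pi\in\GP_2(F)$ and $\rho\in\GP_4(F)$ with $\varphi_{F(\pi)}\sim\rho_{F(\pi)}$ and $\dim\bigl((\varphi\perp-\rho)_{\an}\bigr)\leq 24$. Indeed, $[\varphi]-[\rho]$ is then represented by $(\varphi\perp-\rho)_{\an}\in\I^4(F)$, a form of dimension $\leq24$ that is hyperbolic over $F(\pi)$, hence equal to $\pi\otimes\beta$ for some $\beta$; by \Cref{thm:Gaps} its dimension is $0$, $16$ or $24$, so $\dim(\beta)\in\{0,4,6\}$; since $\mathrm{disc}(\pi)=1$ and $\pi\otimes\beta\in\I^4(F)$ one checks $\mathrm{disc}(\beta)\in G(\pi)$, and a routine adjustment of $\beta$ within its $\pi$-divisibility class, together with padding by hyperbolic planes, produces an Albert form $\alpha$ with $[\varphi]=[\pi\otimes\alpha]+[\rho]$. (The converse is clear: $\pi\otimes\alpha$ is hyperbolic over $F(\pi)$.)

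\emph{Step 3: the construction, and the main obstacle.} It remains to produce $\pi$ and $\rho$, which amounts to a descent of the leading form $\varphi_1$ — or of a $2$-fold Pfister divisor of it — from $F(\varphi)$ to $F$. Here I would be guided by the $12$-dimensional case in $\I^3$: there $\varphi\cong\Pfister{x}\otimes\alpha$, with $x$ and $\alpha$ read off from the shape $e_3(\varphi)=(x)\cup e_2(\alpha)$ of the Arason invariant. By analogy one expects $e_4(\varphi)\in H^4(F)$ to admit a presentation $(a)\cup(b)\cup e_2(\alpha_0)+e_4(\rho)$ with $\rho\in\GP_4(F)$ and $\pi:=\Pfister{a,b}$; subtracting such a $\rho$, the class $[\varphi]-[\rho]$ becomes hyperbolic over $F(\pi)$ \emph{up to $\I^5(F(\pi))$}, since $(a)\cup(b)$ vanishes in $H^2(F(\pi))$. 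The crux, which I expect to be the genuine obstacle, is to prove that this residual class in $\I^5(F(\pi))$ is in fact hyperbolic and, simultaneously, that $\dim\bigl((\varphi\perp-\rho)_{\an}\bigr)\leq 24$ over $F$. This involves large forms in $\I^5$ and cannot be read off from \Cref{thm:APH}, \Cref{thm:LinkTwoPfisterForms} and \Cref{thm:Gaps}; it would require the machinery of quadratic forms over function fields of quadrics — generic splitting, the behaviour of $\I^n$ and of the $e_n$ under such extensions, classification of low-dimensional forms over $F(\pi)$ in the spirit of Laghribi and Hoffmann, and the motivic tools of Vishik and Karpenko behind \Cref{thm:Gaps}. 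Once this is settled, Step 2 finishes the proof.
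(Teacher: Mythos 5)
There is a genuine gap, and it sits exactly where the content of the theorem lies. Your Step 1 (splitting pattern $(24,16,0)$, leading form in $\GP_4(F(\varphi))$) and Step 2 (the reduction: given $\pi\in\GP_2(F)$ and $\rho\in\GP_4(F)$ with $\varphi_{F(\pi)}\sim\rho_{F(\pi)}$ and $\dim((\varphi\perp-\rho)_{\an})\leq 24$, conclude via the subform/norm theorem, the Gap Theorem and a discriminant adjustment as in \Cref{lem:WLOGI2}) are correct and standard. But they only repackage the statement: the existence of such a pair $(\pi,\rho)$ \emph{is} Hoffmann's theorem, up to routine bookkeeping. Your Step 3 does not prove this existence; it describes what a proof would have to accomplish and then explicitly defers the two essential points — that the residual class over $F(\pi)$ lands in $\I^5(F(\pi))$ \emph{and is actually hyperbolic there}, and that $\dim((\varphi\perp-\rho)_{\an})\leq 24$ over $F$ — to unspecified "machinery". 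Note also that the cohomological heuristic you offer presupposes the conclusion (a presentation $e_4(\varphi)=(a)\cup(b)\cup e_2(\alpha_0)+e_4(\rho)$ is essentially equivalent, modulo $\I^5$, to the decomposition being proved), and passing from it back to Witt classes already invokes injectivity of $e_4$ on $\I^4/\I^5$. So the proposal is a reduction of the theorem to its own hardest step, not a proof.

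For context: the paper itself does not reprove this result either — it is quoted from \cite{HoffmannI4} as known input — so there is no in-paper argument your sketch could be measured against. If you want to complete your outline, the missing descent of a $2$-fold Pfister divisor of the leading form from $F(\varphi)$ to $F$ is precisely what Hoffmann's original argument supplies, and it cannot be recovered from \Cref{thm:APH}, \Cref{thm:LinkTwoPfisterForms} and \Cref{thm:Gaps} alone.
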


With \Cref{thm:LinkTwoPfisterForms} in mind, another approach to studying low dimensional forms with Witt class in $\I^n(F)$ is to investigate sums of two Pfister forms. 
We introduce the following terminology.

\begin{de}
    Let $\varphi$ be an anisotropic quadratic form with $[\varphi]\in\I^n(F)$ and $2^n<\dim(\varphi)<2^{n+1}$.
    The form $\varphi$ is said to be a \emph{generalised Albert form (of degree $n$)}, if there are $\pi_1,\pi_2\in\GP_n(F)$ such that 
    \[[\varphi]=[\pi_1]+[\pi_2].\]
    We further denote the set of generalised Albert forms of degree $n$ over $F$ by $\GA_n(F)$.
\end{de}

Such forms also have been investigated by several authors and we now briefly collect some highlights:

In \cite{MR1603857}, generalised Albert forms were investigated with regard towards their splitting properties. 

In \cite[Theorem 4.9]{MR1668530}, O. Izhboldin constructed for each $n\geq3$ and each $k\in\{1,\ldots, n-2\}$ anisotropic generalised Albert forms $\alpha, \beta$ both of dimension $2^{n+1}-2^{k}$ that are not similar, but such that $[\alpha]\equiv[\beta]\mod\I^{n+1}(F)$.

The case $k=n-1$ was not answered in Izhboldin's paper, but D. Hoffmann was able to handle such forms.
Using the Gap \Cref{thm:Gaps} and \cite[Proposition 1]{HoffmannSimilarity}, it follows that Izhboldin's result cannot be generalised to the case $k=n-1$.
Stated more explicitly, the combination of the two results yields the following:

\begin{thm}[D. Hoffmann]\label{thm:GA->Sim}
    Let $\alpha_1,\alpha_2$ be generalised Albert forms over $F$ of dimension $2^n+2^{n-1}$ for some $n\geq2$ such that 
    \[[\alpha_1]\equiv[\alpha_2]\mod \I^{n+1}(F).\]
    Then $\alpha_1$ and $\alpha_2$ are similar.
\end{thm}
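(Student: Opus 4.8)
To show $\alpha_{1}$ and $\alpha_{2}$ are similar I must produce a scalar $c\in F^{*}$ with $\alpha_{1}\cong c\alpha_{2}$, and the plan is to get such a $c$ from a dimension count based on the Gap Theorem, after a preliminary reduction via the Hauptsatz. For a candidate $c$ set $\tau_{c}:=\alpha_{1}\perp-c\alpha_{2}$, of dimension $2(2^{n}+2^{n-1})=3\cdot2^{n}<2^{n+2}$. Since $[\tau_{c}]=([\alpha_{1}]-[\alpha_{2}])+[\Pfister{c}\otimes\alpha_{2}]$, with $[\alpha_{1}]-[\alpha_{2}]\in\I^{n+1}(F)$ by hypothesis and $[\Pfister{c}\otimes\alpha_{2}]\in\I^{n+1}(F)$ because $\Pfister{c}$ is even-dimensional and $[\alpha_{2}]\in\I^{n}(F)$, we get $[\tau_{c}]\in\I^{n+1}(F)$ for every $c$. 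Consequently the anisotropic part of $\tau_{c}$ has dimension $<2^{n+2}$ and, whenever $[\tau_{c}]\in\I^{n+2}(F)$, also Witt class in $\I^{n+2}(F)$; by the Arason-Pfister Hauptsatz \ref{thm:APH} it is then $0$-dimensional, i.e.\ $\tau_{c}$ is hyperbolic and $\alpha_{1}\cong c\alpha_{2}$. So it is enough to pick $c$ so that $[\Pfister{c}\otimes\alpha_{2}]$ and $[\alpha_{1}]-[\alpha_{2}]$ have the same image in $\I^{n+1}(F)/\I^{n+2}(F)$.

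Next, to control $[\alpha_{1}]-[\alpha_{2}]$, I would set $\chi:=(\alpha_{1}\perp-\alpha_{2})_{\an}$, so that $[\chi]\in\I^{n+1}(F)$ and $\dim\chi\le\dim\alpha_{1}+\dim\alpha_{2}=3\cdot2^{n}$. By the Gap \Cref{thm:Gaps}, applied with $n+1$ in place of $n$, either $\chi$ is hyperbolic or $\dim\chi=2^{n+2}-2^{j}$ for some $j\in\{1,\ldots,n+1\}$; together with $\dim\chi\le3\cdot2^{n}=2^{n+2}-2^{n}$ this forces $\dim\chi\in\{0,\,2^{n+1},\,3\cdot2^{n}\}$. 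If $\dim\chi=0$ then $\alpha_{1}\cong\alpha_{2}$ and we are done. If $\dim\chi=2^{n+1}$, the Hauptsatz shows $\chi$ is similar to an $(n+1)$-fold Pfister form. In the remaining case $\dim\chi=3\cdot2^{n}=2^{n+1}+2^{n}$, the form $\chi$ is anisotropic with $[\chi]\in\I^{n+1}(F)$ and sits at exactly the smallest dimension of a generalised Albert form of degree $n+1$; in all cases $[\chi]=[\alpha_{1}]-[\alpha_{2}]$ is a sum of four classes of scaled $n$-fold Pfister forms.

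In parallel I would unwind the shape of the $\alpha_{i}$ themselves via \Cref{thm:LinkTwoPfisterForms}. Writing $[\alpha_{i}]=[c_{i,1}\rho_{i,1}]+[c_{i,2}\rho_{i,2}]$ with $\rho_{i,1},\rho_{i,2}\in\P_{n}(F)$, the form $c_{i,1}\rho_{i,1}\perp c_{i,2}\rho_{i,2}$ has dimension $2^{n+1}$ and anisotropic part $\alpha_{i}$ of dimension $2^{n+1}-2^{n-1}$, hence Witt index $2^{n-2}$; thus $\rho_{i,1}$ and $\rho_{i,2}$ have linkage number $n-2$, and a short dimension count using the Hauptsatz produces $\gamma_{i}\in\P_{n-2}(F)$ and an Albert form $\delta_{i}$ over $F$ with $\alpha_{i}\cong\gamma_{i}\otimes\delta_{i}$. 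The image of $[\alpha_{i}]$ in $\I^{n}(F)/\I^{n+1}(F)$ is then the product of the images of $[\gamma_{i}]$ and $[\delta_{i}]$, the latter corresponding to the class of the biquaternion algebra associated to $\delta_{i}$.

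It remains, in the two cases $\dim\chi\in\{2^{n+1},\,3\cdot2^{n}\}$, to exhibit the required $c$ --- equivalently, to show directly that $\alpha_{1}$ and $\alpha_{2}$ are similar --- and this is exactly the content of \cite[Proposition~1]{HoffmannSimilarity}. The mechanism is to combine the structural form $\alpha_{i}\cong\gamma_{i}\otimes\delta_{i}$ with the tight dimension bound on $\chi$: one shows that $\alpha_{1}$ and $\alpha_{2}$ admit a common $(n-2)$-fold Pfister divisor $\gamma$, say $\alpha_{i}\cong\gamma\otimes\delta_{i}'$, which reduces matters to the case $n=2$, where two Albert forms congruent modulo $\I^{3}(F)$ have Brauer-equivalent --- hence isomorphic --- associated biquaternion algebras and are therefore similar; with $\delta_{1}'$ similar to $\delta_{2}'$ one then gets $\alpha_{1}\cong c\alpha_{2}$. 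I expect the genuinely hard point to be precisely the step that forces one to bring in the Gap Theorem: the hypothesis $[\alpha_{1}]\equiv[\alpha_{2}]\mod\I^{n+1}(F)$ controls only the \emph{product} of the invariants attached to $\gamma_{i}$ and to $\delta_{i}$, not the two factors separately, so one has to squeeze $\dim\chi$ down as far as possible --- in particular to handle the borderline case $\dim\chi=3\cdot2^{n}$, at which $\chi$ itself lives at the dimension of a degree-$(n+1)$ generalised Albert form --- before the common link can be pinned down.
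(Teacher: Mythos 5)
The paper gives no self-contained proof of this statement: it is presented as a known result obtained by combining the Gap Theorem (\Cref{thm:Gaps}) with \cite[Proposition 1]{HoffmannSimilarity}, and your proposal follows exactly that route, with correct auxiliary computations (the constraint $\diman(\alpha_1\perp-\alpha_2)\in\{0,2^{n+1},3\cdot2^{n}\}$ and the decompositions $\alpha_i\cong\gamma_i\otimes\delta_i$ with $\gamma_i\in\P_{n-2}(F)$ and $\delta_i$ an Albert form). Your sketch of the internal mechanism of Hoffmann's Proposition 1 (producing a common $(n-2)$-fold link and reducing to Jacobson's theorem on Albert forms) is a plausible reconstruction rather than a verified argument, but since you, like the paper, defer the crux to that citation, the approach is essentially the same.
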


We will now have a closer look at all the forms of dimension $2^n+2^{n-1}$ whose Witt classes lie in $\I^n(F)$.
In \cite[Conjecture 2]{HoffmannI4}, it is explicitly conjectured that for $n=4$, these forms are generalised Albert forms. 
As no counterexamples are known so far, this conjecture can clearly be broadened to cover all positive integers $n$:

\begin{conj}\label{conj:GenAl}
    Let $n\geq2$ and let $\varphi$ be a quadratic form with $\dim(\varphi)=2^n+2^{n-1}$ and $[\varphi]\in\I^n(F)$.
    Then $\varphi$ is a generalised Albert form.
\end{conj}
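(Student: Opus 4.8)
\medskip\noindent\textbf{A possible line of attack.}
I would organise the proof into base cases, a transfer step built on a discrete valuation, and an outer reduction. The cases $n\le 3$ are classical: for $n=2$ the statement is the definition of an Albert form — a $6$‑dimensional anisotropic form of trivial discriminant is an Albert form of a biquaternion algebra $Q_1\otimes Q_2$ and hence Witt equivalent to the difference of the two norm forms, which lie in $\GP_2(F)$ — and for $n=3$ one expands Pfister's presentation $\varphi\cong\Pfister x\otimes\alpha$, with $\alpha$ an Albert form, into a difference of two forms in $\GP_3(F)$. So suppose $n\ge 4$, and take $\varphi$ anisotropic (as the definition of a generalised Albert form requires).

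For the transfer step, let $R$ be a discrete valuation ring with residue field $F$, fraction field $K$ and uniformiser $t$: I would try to show that the conjecture over $F$ in degrees $n-1$ and $n$ implies it over $K$ in degree $n$. Given $\varphi$ over $K$ with $[\varphi]\in\I^n(K)$, $\dim\varphi=2^n+2^{n-1}$ and $\varphi$ anisotropic, write $\varphi\cong\rho_1\perp t\rho_2$ with $\rho_1,\rho_2$ anisotropic over $F$ by Springer's theorem; the second‑residue sequence $0\to\I^n(F)\to\I^n(K)\to\I^{n-1}(F)\to0$ forces $[\rho_1],[\rho_2]\in\I^{n-1}(F)$ and $[\rho_1]+[\rho_2]\in\I^n(F)$. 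Since $\dim\rho_1+\dim\rho_2=2^n+2^{n-1}$, an elementary bookkeeping with \Cref{thm:APH} and \Cref{thm:Gaps} applied in degrees $n-1$ and $n$ leaves only
\[(\dim\rho_1,\dim\rho_2)\in\bigl\{(2^n+2^{n-1},\,0),\ (0,\,2^n+2^{n-1}),\ (2^n,\,2^{n-1}),\ (2^{n-1},\,2^n),\ (3\cdot2^{n-2},\,3\cdot2^{n-2})\bigr\}.\]
The two workhorses are the inclusions $t\cdot\GP_n(F)\subseteq\GP_n(K)$ and $\Pfister t\otimes\GP_{n-1}(F)\subseteq\GP_n(K)$, together with the identity $[\varphi]=[\rho_1\perp\rho_2]-[\Pfister t\otimes\rho_2]$ (from $\Pfister t\otimes\rho_2\cong\rho_2\perp(-t)\rho_2$) and the fact that $[\rho_1\perp\rho_2]\in\I^n(F)$, so that \Cref{thm:APH} and \Cref{thm:Gaps} in degree $n$ pin the anisotropic dimension of $\rho_1\perp\rho_2$ to $0$, $2^n$, or $2^n+2^{n-1}$. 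Running through the five configurations — invoking the conjecture over $F$ in degree $n-1$ for the $\rho_i$ of dimension $3\cdot2^{n-2}$, and in degree $n$ for anisotropic summands of dimension $2^n+2^{n-1}$ — one checks that every sub‑case in which $(\rho_1\perp\rho_2)_{\an}$ is hyperbolic or lies in $\GP_n(F)$ closes at once, exhibiting $[\varphi]$ as a sum of two forms in $\GP_n(K)$.

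\medskip\noindent
The main obstacle is the complementary family of sub‑cases, where this recipe expresses $[\varphi]$ as a sum of three (or even four) forms in $\GP_n(K)$ which must then be merged down to two. This is exactly the ``Pfister number two'' phenomenon that is already open for $n=4$, where even Hoffmann's structure theorem produces only three Pfister forms. By \Cref{thm:LinkTwoPfisterForms} two general $n$‑fold Pfister forms merge into strictly fewer precisely when their linkage number is large, so the heart of the matter is to use the freedom in the generalised Albert decompositions over $F$ to force one of the Pfister forms $\pi_i$ to be divisible by $\rho_2$ over $K$ — or, more generally, to link with the ramified factor $\Pfister t\otimes\rho_2$; I expect this to be where the real work lies, with \Cref{thm:GA->Sim} serving as a constraint, since it already determines the similarity class of any valid two‑Pfister presentation of $\varphi$. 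Finally, even a complete transfer step settles the conjecture only over fields carrying a discrete valuation whose residue field is known to satisfy it; extending to arbitrary fields would require in addition an outer induction on a genuine complexity measure (transcendence degree, or $2$‑cohomological dimension) and a passage to generic fields of quadrics, in the spirit of the proofs of \Cref{thm:Gaps} by Vishik and Karpenko, or else a direct study of the algebraic cycles on the quadric of $\varphi$ — and it is there, rather than in the valuation‑theoretic bookkeeping, that the genuine difficulty would lie.
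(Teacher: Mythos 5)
You are being asked to prove \Cref{conj:GenAl}, which the paper explicitly leaves as an open conjecture: it establishes only relative results, namely that the property transfers up and down a 2-Henselian discrete valuation (\Cref{thm:GAUp}, \Cref{thm:GAdown}) and that it holds for iterated Laurent extensions of linked fields and for rigid fields (\Cref{cor:LinkedLaurentGA}, \Cref{cor:RigidGA}). Your proposal honestly stops short of a proof, and that assessment is correct: the ``outer reduction'' you gesture at in the final sentences (induction on transcendence degree or cohomological dimension, generic splitting, algebraic cycles) is not supplied, and without it the valuation-theoretic transfer only reaches fields whose residue field is already known to satisfy the conjecture. As a proof of the stated conjecture the proposal therefore has an irreparable gap --- indeed no complete argument is known even for $n=4$, where the statement is Hoffmann's original Conjecture 2.

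That said, your diagnosis of where the transfer step gets stuck is wrong, and the fix is precisely the content of \Cref{thm:GAUp}. You try to manufacture two Pfister forms directly from $[\varphi]=[\rho_1\perp\rho_2]-[\Pfister{t}\otimes\rho_2]$ and then worry about merging three or four $n$-fold Pfister forms into two; that merging problem never has to be solved. The paper instead invokes the equivalent characterisations of \Cref{prop:GAclassification}: an anisotropic $(2^n+2^{n-1})$-dimensional form in $\I^n$ is a generalised Albert form as soon as it merely \emph{contains} a subform in $\GP_{n-1}$. In the unbalanced case $\dim\rho_2=2^{n-1}$, the Hauptsatz \Cref{thm:APH} forces $\overline{\rho_2}\in\GP_{n-1}(F)$, hence $t\rho_2\in\GP_{n-1}(K)$ is such a subform and the case closes with no merging at all. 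In the balanced case $\dim\rho_1=\dim\rho_2=3\cdot2^{n-2}$, the residue sequence gives $[\overline{\rho_1}]\equiv[\overline{\rho_2}]\mod\I^n(F)$; the hypothesis $\GA_{n-1}(F,3\cdot2^{n-2})=\I^{n-1}(F,3\cdot2^{n-2})$ makes both residues generalised Albert forms, so \Cref{thm:GA->Sim} makes them similar, which lifts (2-Henselianness, or a section of the residue map, is essential here --- Springer decomposition over an arbitrary DVR is not enough) to $\rho_2\cong -x\rho_1$, whence $\varphi\cong\Pfister{xt}\otimes\rho_1$ and $\GP_n(\varphi)\leq\GP_{n-1}(\rho_1)=2$ by \Cref{prop:PfisterNumberMult}. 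So the transfer step closes completely; the genuinely open part of your plan is entirely the base of the intended induction, not the valuation-theoretic bookkeeping.
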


Note that this conjecture cannot be transferred to generalised Albert forms of higher dimension in some fixed power of the fundamental ideal. 
For example, there are fields $F$ and 14-dimensional forms $\varphi$ with Witt class in $\I^3(F)$ that are not generalised Albert forms as found independently in \cite{HoffmannTignol} and \cite{IzhboldinKarpenko}.
See also the upcoming \Cref{rem:NoHigherDimension} for some further information.

The aim of this note is now to study the validity of \Cref{conj:GenAl} with respect to discretely valued fields. 
Therefore we consider a discrete valuation ring $R$ with residue field $F$ and quotient field $K$ both of characteristic not 2.
We will study forms of dimension $2^n+2^{n-1}$ in $\I^n(F)$ resp. $\I^n(K)$ and prove going-up and going-down theorems that describe how structural properties between such forms over $F$ resp. $K$ can be transferred to the other field.
As a main result, we will show in \Cref{thm:GAUp}
that if \Cref{conj:GenAl} holds for $F$ for $n-1$ and $n$ then \Cref{conj:GenAl} holds for $K$ for $n$.
In the light of \Cref{thm:GA->Sim} we will further consider the question whether two forms of dimension $2^n+2^{n-1}$ whose Witt class lies in $\I^n$ and are congruent modulo $\I^{n+1}$ are similar and also prove relations between $F$ and $K$. 
 
The current work can be seen as an accompanying work of a collaboration with K. Becher \cite{BecherLorenz}.
The yet unpublished work focuses on generalised Albert forms in \emph{abstract Witt rings} as considered by M . Marshall in \cite{MR0674651}. 
Thus no transcendental or algebro-geometric methods are at hand which were used to deduce the above results. 
Thus, the emphasis of the other article lies more on the development of methods that mimic these techniques in the setting of abstract Witt rings.

\section{Preliminaries}

Throughout this article, all fields are assumed to be of characteristic not 2. 
By a \emph{quadratic form} or \emph{form} for short, we will always mean a nondegenerate quadratic form of finite dimension.
The \emph{Witt ring} of a field $F$ is denoted by $\W(F)$ and consists of the \emph{Witt classes} of quadratic forms.
A system of representatives of the Witt classes is given by the isomorphism classes of anisotropic forms.
The Witt class of a form $\varphi$ is denoted by $[\varphi]$.
Isometry of forms is denoted by $\cong$ while equality of Witt classes is denoted by $=$.
We further use $\perp,\otimes$ to denote orthogonal sum and tensor product of quadratic forms and use $+,\cdot$ for the induced operations in the Witt ring.
The dimension of the anisotropic part $\varphi_\an$ of a quadratic form $\varphi$ is denoted by 
\[\diman(\varphi)=\dim(\varphi_\an).\]

A special emphasis will lie on the \emph{fundamental ideal} $\I(F)$, which is generated by the Witt classes of \emph{$1$-fold Pfister forms}, i.e. forms of the shape $\Pfister{a}=\qf{1,-a}$ for some $a\in F^\ast$.
The $n$-th power of the fundamental ideal is denoted by $\I^n(F)$ and is clearly generated by the classes of \emph{$n$-fold Pfister forms} $\Pfister{a_1,\ldots, a_n}=\Pfister{a_1}\otimes\ldots\otimes\Pfister{a_n}$.

The set of all forms that are isometric to an $n$-fold Pfister form is denoted by $\P_n(F)$ and the set of all forms that are similar to an $n$-fold Pfister form is denoted by $\GP_n(F)$.
Slightly abusing terminology, we call forms in $\GP_n(F)$ also Pfister forms.

For a form $\varphi$ with $[\varphi]\in\I^n(F)$, an invariant that is important to us is the \emph{$n$-Pfister number} $\GP_n(\varphi)$ defined as
\[\GP_n(\varphi)=\min\{k\in\N\mid \exists\pi_1,\ldots,\pi_k\in\GP_n(F):[\varphi]=[\pi_1]+\ldots+[\pi_k]\}.\]
The Pfister number of quadratic forms has been investigated by many researchers, see \cite{HoffmannTignol}, \cite{IzhboldinKarpenko}, \cite{ParimalaSureshTignol},
\cite{MR2630047}, \cite{Raczek2013}, \cite{Karpenko2017}, \cite{MR4554566} among others. 

As an example for somehow easy fields in which we can determine the Pfister numbers precisely, we have \emph{$n$-linked} fields, which are a straightforward generalization of the usual linked fields (which are precisely the 2-linked fields).

\begin{thm}[\cite{HoffmannSimpleDecomposition}]\label{SimpleDec}
	For any field $F$ and any $n\geq 2$, the following are equivalent:
	\begin{enumerate}[label=(\roman*)]
		\item for any anisotropic form $\varphi$ with $[\varphi]\in \I^n(F)$ there are $\pi\in \P_{n-1}(F)$ and $\tau\in \I(F)$ with $\varphi\cong \pi\otimes\tau$;
		\item\label{SimpleDec2} any anisotropic form $\varphi$ with $[\varphi]\in \I^n(F)$ is isometric to a sum of forms in $\GP_n(F)$;
		\item $F$ is $n$-linked, i.e. for any $\pi_1,\pi_2\in \P_n(F)$, there is some $\pi\in \P_{n-1}(F)$ and $a,b\in F^\ast$ with $\varphi\cong\Pfister a\otimes \sigma$ and $\psi\cong\Pfister b\otimes \sigma$.
	\end{enumerate}
\end{thm}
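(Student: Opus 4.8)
The plan is to establish the cyclic chain $(i)\Rightarrow(ii)\Rightarrow(iii)\Rightarrow(i)$. For $(i)\Rightarrow(ii)$, let $\varphi$ be anisotropic with $[\varphi]\in\I^n(F)$ and write $\varphi\cong\pi\otimes\tau$ with $\pi\in\P_{n-1}(F)$ and $[\tau]\in\I(F)$. Then $\tau$ is anisotropic of even dimension, say $\tau\cong\qf{b_1,\dots,b_{2m}}$, and distributing the tensor product over the orthogonal sum while grouping the slots of $\tau$ in pairs gives
\[\varphi\;\cong\;\bigperp_{i=1}^{m}\pi\otimes\qf{b_{2i-1},b_{2i}}\;\cong\;\bigperp_{i=1}^{m}b_{2i-1}\bigl(\pi\otimes\Pfister{-b_{2i-1}^{-1}b_{2i}}\bigr),\]
which is an orthogonal sum of forms in $\GP_n(F)$.

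For $(ii)\Rightarrow(iii)$, let $\pi_1,\pi_2\in\P_n(F)$; we may assume both anisotropic, as otherwise one of them is hyperbolic and the assertion is trivial. Since a Pfister form represents $1$, the form $\pi_1\perp-\pi_2$ is isotropic, so $\psi:=(\pi_1\perp-\pi_2)_\an$ satisfies $\dim(\psi)\le 2^{n+1}-2$ while $[\psi]\in\I^n(F)$. By $(ii)$, $\psi$ is an orthogonal sum of forms in $\GP_n(F)$, hence $2^n\mid\dim(\psi)$, and combined with $\dim(\psi)<2^{n+1}$ this forces $\dim(\psi)\in\{0,2^n\}$. In the first case $\pi_1\cong\pi_2$; in the second, $i_W(\pi_1\perp-\pi_2)=2^{n-1}$, so \Cref{thm:LinkTwoPfisterForms} (with $m=n$) gives linkage number $n-1$ and hence a common $(n-1)$-fold Pfister factor of $\pi_1$ and $\pi_2$.

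For $(iii)\Rightarrow(i)$, let $\varphi$ be anisotropic with $[\varphi]\in\I^n(F)$, and induct on $\GP_n(\varphi)$. If $\GP_n(\varphi)\le 1$, then $\varphi\cong c\Pfister{a_1,\dots,a_n}\cong\Pfister{a_1,\dots,a_{n-1}}\otimes c\Pfister{a_n}$ is of the required shape. If $\GP_n(\varphi)=k\ge 2$, fix a shortest decomposition $[\varphi]=[\rho_1]+\dots+[\rho_k]$ with $\rho_i=c_i\bar\rho_i$, $\bar\rho_i\in\P_n(F)$ anisotropic. By $(iii)$, $\bar\rho_1$ and $\bar\rho_2$ have a common $(n-1)$-fold Pfister factor $\pi_0$ (necessarily anisotropic), so $\rho_1\perp\rho_2\cong\pi_0\otimes\mu$ for a $4$-dimensional $\mu$, and $(\rho_1\perp\rho_2)_\an\cong\pi_0\otimes\mu_0$ with $\dim(\mu_0)\le 4$; a computation modulo $\I^n(F)$ using \Cref{thm:APH} shows $\dim(\mu_0)$ is even, since $\dim(\mu_0)$ odd would force $[\pi_0]\in\I^n(F)$, impossible. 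If $\dim(\mu_0)\le 2$, then $(\rho_1\perp\rho_2)_\an\in\GP_n(F)\cup\{0\}$, producing a shorter $\GP_n$-decomposition of $[\varphi]$ and contradicting minimality; hence $\dim(\mu_0)=4$. For $k=2$ this gives $\varphi\cong\pi_0\otimes\mu_0$ with $[\mu_0]\in\I(F)$, as desired; for $k\ge 3$, one applies the inductive hypothesis to $(\rho_2\perp\dots\perp\rho_k)_\an\cong\pi'\otimes\tau'$, expands $\tau'$ into binary slots as above, and uses $n$-linkedness to replace $\pi'$ by a single $(n-1)$-fold Pfister form $\beta$ that simultaneously divides $\bar\rho_1$ and all the $\GP_n$-pieces of $(\rho_2\perp\dots\perp\rho_k)_\an$; then $[\varphi]\in[\beta]\W(F)$, and since $\varphi$ and $\beta$ are anisotropic, $\varphi$ is divisible by $\beta$, i.e.\ $\varphi\cong\beta\otimes\tau$ with $[\tau]\in\I(F)$.

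The only genuine obstacle is this last reduction for $k\ge 3$: linkage of Pfister forms is a pairwise notion, and the point is to arrange an \emph{entire} minimal $\GP_n$-decomposition to be divisible by one fixed $(n-1)$-fold Pfister form (equivalently, that the orthogonal sum of the pieces is anisotropic and Pfister-divisible). Doing so requires controlling how the various $(n-1)$-fold Pfister factors of a single $n$-fold Pfister form are related under the $n$-linkedness hypothesis; this is the technical core of the theorem, whereas all the dimension bookkeeping above follows from \Cref{thm:APH} and \Cref{thm:LinkTwoPfisterForms}.
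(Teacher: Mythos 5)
This theorem is quoted from \cite{HoffmannSimpleDecomposition} and the paper supplies no proof of it, so your attempt has to stand on its own. Two of your three implications do: (i)$\Rightarrow$(ii) is the routine splitting of $\tau$ into binary slots, and (ii)$\Rightarrow$(iii) correctly combines the divisibility $2^n\mid\dim(\pi_1\perp-\pi_2)_\an$ with the bound $\dim(\pi_1\perp-\pi_2)_\an\leq 2^{n+1}-2$ and then reads off linkage number $n-1$ from \Cref{thm:LinkTwoPfisterForms}. The portion of (iii)$\Rightarrow$(i) that you actually carry out — the case $\GP_n(\varphi)\leq 2$, including the parity argument for $\dim(\mu_0)$, which is the same device as in \Cref{lem:WLOGI2} — is also correct.

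But the proof has a genuine gap exactly where you flag one, and flagging it is not closing it. For $\GP_n(\varphi)=k\geq 3$ you need to pass from the \emph{pairwise} linkage that (iii) provides to a \emph{single} $\beta\in\P_{n-1}(F)$ dividing all $k$ summands of a (suitably rewritten) minimal decomposition simultaneously. Your sketch says one "uses $n$-linkedness to replace $\pi'$ by a single $(n-1)$-fold Pfister form $\beta$", but $n$-linkedness only produces, for each pair of $n$-fold Pfister forms, \emph{some} common $(n-1)$-fold divisor, and there is no a priori reason these can be chosen coherently: the link of $\bar\rho_1$ with $\pi'\otimes\qf{b_1,b_2}$ need not be $\pi'$, and need not agree with the link of $\bar\rho_1$ with $\pi'\otimes\qf{b_3,b_4}$. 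Upgrading pairwise linkage to simultaneous divisibility for an arbitrary finite family is the actual content of Hoffmann's theorem; it needs an additional ingredient (e.g. a lemma relating two different $(n-1)$-fold Pfister divisors of a single anisotropic $n$-fold Pfister form, or a differently organised induction on $\dim\varphi$), and without it your induction does not close. As written, the argument proves (i)$\Leftrightarrow$(ii)$\Leftrightarrow$(iii) only under the additional hypothesis $\GP_n(\varphi)\leq 2$ in (iii)$\Rightarrow$(i).
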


The second statement in the above result immediately implies the following:

\begin{cor}\label{cor:nlinkedPfisterNumber}
	Let $F$ be an $n$-linked field and let $\varphi$ be an anisotropic quadratic form of dimension $\dim (\varphi)=d$ with $[\varphi]\in \I^n(F)$. 
    Then $d$ is divisible by $2^n$ and we have $\GP_n(\varphi)=\frac d{2^n}$.
\end{cor}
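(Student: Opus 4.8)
The plan is to extract everything from the isometry statement in \Cref{SimpleDec}. Since $F$ is $n$-linked, the second condition of that theorem applies to the anisotropic form $\varphi$ and yields an isometry $\varphi\cong\pi_1\perp\dots\perp\pi_k$ with $\pi_1,\dots,\pi_k\in\GP_n(F)$. Comparing dimensions and using $\dim(\pi_i)=2^n$ for each $i$, we obtain $d=\dim(\varphi)=k\cdot 2^n$; in particular $2^n\mid d$ and $k=d/2^n$. This is the only step that uses the full strength of \Cref{SimpleDec} — namely that one gets an isometry, not merely a Witt equivalence — and it is precisely what forces the divisibility by $2^n$.

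It then remains to identify $k=d/2^n$ with the $n$-Pfister number, which I would do by proving two inequalities. For $\GP_n(\varphi)\leq d/2^n$, one just passes from the isometry above to the Witt class identity $[\varphi]=[\pi_1]+\dots+[\pi_k]$, which exhibits $\varphi$ as a sum of $k$ forms in $\GP_n(F)$. For the reverse inequality, set $m=\GP_n(\varphi)$ and choose $\rho_1,\dots,\rho_m\in\GP_n(F)$ with $[\varphi]=[\rho_1]+\dots+[\rho_m]$; since $\varphi$ is anisotropic it is the anisotropic part of $\rho_1\perp\dots\perp\rho_m$, so
\[
d=\diman(\varphi)\leq\dim(\rho_1\perp\dots\perp\rho_m)=m\cdot 2^n,
\]
whence $m\geq d/2^n$. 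Combining the two inequalities gives $\GP_n(\varphi)=d/2^n$, as claimed.

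I do not expect a genuine obstacle here: the corollary is a formal consequence of \Cref{SimpleDec} together with the elementary fact that the anisotropic part of an orthogonal sum of Pfister forms has dimension at most the sum of their dimensions. The only point requiring care is to invoke the isometry, rather than a mere Witt equivalence, in \Cref{SimpleDec}; without it the divisibility $2^n\mid d$ can fail, since dimensions such as $2^{n+1}-2^{n-1}$ are realised by $\I^n$-forms over general fields.
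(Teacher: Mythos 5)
Your argument is correct and is precisely the reasoning the paper has in mind: the paper states the corollary as an immediate consequence of \Cref{SimpleDec}~\ref{SimpleDec2}, and your write-up simply makes explicit the dimension count from the isometry (giving $2^n\mid d$ and the upper bound on $\GP_n(\varphi)$) together with the elementary lower bound $\diman(\varphi)\leq m\cdot 2^n$ for any Witt-class representation by $m$ forms in $\GP_n(F)$. No gap, and no genuine difference in approach.
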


There are some easy bounds for the Pfister number if we have divisibility by some Pfister form.
We start with the case of 1-fold Pfister forms.

\begin{prop}\label{prop:PfisterNumberMult}
	Let $[\psi]\in \I^n(F)$ and $a\in F^\ast$. 
    For $\varphi=\Pfister a\otimes\psi$, we have $[\varphi]\in\I^{n+1}(F)$, and
	\[{\GP}_{n+1}(\varphi)\leq{\GP}_n(\psi).\]
\end{prop}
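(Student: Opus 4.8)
The plan is to push a minimal $\GP_n$-decomposition of $\psi$ through the tensor product with $\Pfister a$. First I would record the easy half of the statement: in $\W(F)$ we have $[\varphi]=[\Pfister a]\cdot[\psi]$, and since $[\Pfister a]\in\I(F)$ and $[\psi]\in\I^n(F)$, this product lies in $\I^{n+1}(F)$, so $[\varphi]\in\I^{n+1}(F)$.

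For the bound, set $k={\GP}_n(\psi)$ and fix $\pi_1,\ldots,\pi_k\in\GP_n(F)$ with $[\psi]=\sum_{i=1}^k[\pi_i]$. Multiplying this identity in $\W(F)$ by $[\Pfister a]$ yields $[\varphi]=\sum_{i=1}^k[\Pfister a\otimes\pi_i]$. It then remains to observe that each summand $\Pfister a\otimes\pi_i$ lies in $\GP_{n+1}(F)$: writing $\pi_i=c_i\rho_i$ with $c_i\in F^\ast$ and $\rho_i\in\P_n(F)$, we have $\Pfister a\otimes\pi_i=c_i(\Pfister a\otimes\rho_i)$, and $\Pfister a\otimes\rho_i\in\P_{n+1}(F)$ by the definition of an $(n+1)$-fold Pfister form. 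Hence $[\varphi]$ is exhibited as a sum of $k$ classes of forms in $\GP_{n+1}(F)$, giving ${\GP}_{n+1}(\varphi)\leq k={\GP}_n(\psi)$.

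There is essentially no obstacle here: the claim is a formal consequence of the multiplicativity of the Witt ring together with the stability of the classes $\GP_n(F)$ under multiplication by one-fold Pfister classes. The only minor point worth noting is that some of the forms $\Pfister a\otimes\pi_i$ may be hyperbolic, in which case they contribute nothing to the Witt sum and can simply be omitted; this only decreases the number of Pfister forms required and so does not affect the stated inequality. The degenerate case $k=0$, i.e.\ $\psi$ hyperbolic, is immediate, since then $\varphi$ is hyperbolic as well.
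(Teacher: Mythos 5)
Your proof is correct and follows the same route as the paper: multiply a minimal $\GP_n$-decomposition of $[\psi]$ by $[\Pfister a]$ and note that each $\Pfister a\otimes\pi_i$ lies in $\GP_{n+1}(F)$. The extra remarks about hyperbolic summands and the degenerate case are fine but not needed.
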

\begin{proof}
	It is clear that we have $[\varphi]\in \I^{n+1}(F)$ as we have $[\psi]\in \I^n(F)$ and $[\Pfister a]\in \I(F)$. 
    Let now $\pi_1,\ldots,\pi_k\in \GP_n(F)$ with $[\psi]=[\pi_1]+\ldots+[\pi_k]$. 
    We then have
	\[[\varphi]=[\Pfister a\otimes\pi_1]+\ldots+[\Pfister a\otimes\pi_k]\]
	with $\Pfister a\otimes\pi_\ell\in \GP_{n+1}(F)$ for all $\ell\in\{1,\ldots, k\}$ as desired.
\end{proof}

If the foldness of the Pfister factor is high in relation to the power of the fundamental ideal in which the given quadratic form lives, the following observation will be useful in the sequel.

\begin{lem}\label{lem:WLOGI2}
	Let $n\in\N$ be an integer with $n\geq 3$ and $\varphi$ be an anisotropic form of positive dimension that is divisible by some $\pi\in\GP_{n-2}(F)$  and such that $[\varphi]\in \I^n(F)$. 
    Then, we have $2\mid \frac{\dim(\varphi)}{\dim(\pi)}$ and there is some quadratic form $\sigma$ with $[\sigma]\in \I^2(F)$ and $\varphi\cong\pi\otimes\sigma$.
    
	In particular, we have
	\[\GP_n(\varphi)\leq \GP_2(\sigma)\leq \frac{\dim(\sigma)-2}2.\]
\end{lem}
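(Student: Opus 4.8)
The plan is to use the divisibility hypothesis to write $\varphi\cong\pi\otimes\tau$, then to replace $\tau$ by a form $\sigma$ with $[\sigma]\in\I^2(F)$ without disturbing the isometry $\varphi\cong\pi\otimes(\,\cdot\,)$, and finally to transport a $\GP_2$-decomposition of $\sigma$ up to a $\GP_n$-decomposition of $\varphi$. Fix a form $\tau$ with $\varphi\cong\pi\otimes\tau$; then $\dim(\tau)=\dim(\varphi)/\dim(\pi)$ is forced, and $\pi$ is anisotropic, since for any $a\in F^\ast$ represented by $\tau$ the scalar multiple $a\pi$ is an orthogonal summand of the anisotropic form $\varphi$. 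Recall also that $[\pi]\in\I^{n-2}(F)$ since $\pi\in\GP_{n-2}(F)$.

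First I would show that $m:=\dim(\tau)$ is even, which is precisely the divisibility claim $2\mid\dim(\varphi)/\dim(\pi)$. If $m$ were odd, write $\tau\cong\qf{a}\perp\tau'$ with $\dim(\tau')$ even; then $[\pi\otimes\tau']\in\I^{n-1}(F)$, so $[a\pi]=[\varphi]-[\pi\otimes\tau']\in\I^{n-1}(F)$. But $a\pi$ is anisotropic of dimension $2^{n-2}<2^{n-1}$, which contradicts the Arason--Pfister Hauptsatz (\Cref{thm:APH}). Hence $m$ is even and $[\tau]\in\I(F)$.

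Next, let $d$ be the signed discriminant of $\tau$. Since $\I^2(F)$ is the kernel of the signed discriminant $\I(F)\to F^\ast/F^{\ast 2}$ and $\Pfister{d}$ has signed discriminant $d$, we get $[\tau]\equiv[\Pfister{d}]\pmod{\I^2(F)}$; multiplying by $[\pi]\in\I^{n-2}(F)$ gives $[\pi\otimes\tau]\equiv[\pi\otimes\Pfister{d}]\pmod{\I^n(F)}$, and since $[\pi\otimes\tau]=[\varphi]\in\I^n(F)$ we conclude $[\pi\otimes\Pfister{d}]\in\I^n(F)$. Now $\pi\otimes\Pfister{d}$ has dimension $2^{n-1}<2^n$, so by the Hauptsatz (\Cref{thm:APH}) it is hyperbolic; writing $\pi\otimes\Pfister{d}\cong\pi\perp -d\pi$, this forces $d\pi\cong\pi$. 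Hence, writing $\tau\cong\qf{a_1}\perp\tau_1$ and setting $\sigma:=\qf{a_1 d}\perp\tau_1$, we have $\pi\otimes\qf{a_1 d}=(a_1 d)\pi\cong a_1\pi=\pi\otimes\qf{a_1}$, so $\varphi\cong\pi\otimes\tau\cong\pi\otimes\sigma$; and $\sigma$ has even dimension $m$ with signed discriminant $d\cdot d$, a square, so $[\sigma]\in\I^2(F)$. I expect this step to be the main obstacle: the key point is to recognise that the discriminant obstruction of the cofactor $\tau$ is carried by the Pfister form $\pi\otimes\Pfister{d}$, whose dimension is just small enough to be annihilated by the Hauptsatz.

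Finally I would derive the Pfister-number inequalities. If $[\sigma]=[\rho_1]+\dots+[\rho_k]$ with $\rho_i\in\GP_2(F)$ and $k=\GP_2(\sigma)$, then $[\varphi]=[\pi\otimes\sigma]=\sum_{i=1}^{k}[\pi\otimes\rho_i]$, and each $\pi\otimes\rho_i$ lies in $\GP_n(F)$, being a scalar multiple of a tensor product of an $(n-2)$-fold with a $2$-fold Pfister form; therefore $\GP_n(\varphi)\leq\GP_2(\sigma)$. For the bound $\GP_2(\sigma)\leq\frac{\dim(\sigma)-2}{2}$ I would use the standard estimate for forms with Witt class in $\I^2(F)$: put $2k=\dim(\sigma)$, write $\sigma\cong\qf{a_1,\dots,a_{2k}}$ and group this diagonalisation into the binary blocks $\qf{a_{2i-1},a_{2i}}\cong a_{2i-1}\Pfister{b_i}$ with $b_i:=-a_{2i-1}a_{2i}$, so that $[\sigma]=\sum_{i=1}^{k}a_{2i-1}[\Pfister{b_i}]$, where $\prod_{i=1}^{k}b_i$ equals the signed discriminant of $\sigma$ and is therefore a square. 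One then iterates the identity
\[c_1[\Pfister{b_1}]+c_2[\Pfister{b_2}]=\bigl[c_1\Pfister{b_1,-c_1c_2}\bigr]+c_2b_1[\Pfister{b_1b_2}]\]
to fuse two one-fold Pfister summands at a time into one form in $\GP_2(F)$ plus a single one-fold Pfister summand carrying the product of the two parameters; after $k-1$ such fusions the surviving one-fold summand has square parameter $\prod_{i}b_i$ and is hence hyperbolic, so $[\sigma]$ is a sum of $k-1=\frac{\dim(\sigma)-2}{2}$ forms in $\GP_2(F)$, as claimed.
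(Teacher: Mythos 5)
Your proof is correct and follows essentially the same route as the paper: write $\varphi\cong\pi\otimes\tau$, rule out odd $\dim(\tau)$ by applying the Arason--Pfister Hauptsatz to a scalar multiple of $\pi$, then adjust one diagonal entry of $\tau$ to kill the discriminant by applying the Hauptsatz to a $2^{n-1}$-dimensional multiple of $\pi$ lying in $\I^n(F)$, and finally tensor a $\GP_2$-decomposition of $\sigma$ with $\pi$. The only cosmetic differences are that you phrase the discriminant adjustment via $\pi\otimes\Pfister{d}$ being hyperbolic (hence $d\pi\cong\pi$) rather than via the paper's binary form $\pi\otimes\qf{x,-a}$, and that you spell out the standard bound $\GP_2(\sigma)\leq\frac{\dim(\sigma)-2}{2}$ which the paper simply cites from Lam.
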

\begin{proof}
	By assumption, we can find a quadratic form $\tau$ of dimension $d$ over $F$ with $\varphi\cong\pi\otimes\tau$. 
	We first assume further that $2\mid \dim(\tau) = \frac{\dim(\varphi)}{\dim(\pi)}$ and will show at the and of the proof that this is necessary. 
    If we already have $[\tau]\in \I^2(F)$, we are done.
    Otherwise, we write $\tau\cong\qf x\perp\tau'$ for some $x\in F^\ast$ and a quadratic form $\tau'$.
    
    For $a:=(-1)^{\frac d2}\cdot \det(\tau')$, we put $\sigma:=\qf{a}\perp\tau'$. 
    Then $\sigma$ is also of dimension $d$ and have $\det(\sigma)\in(-1)^{\frac d2}F^\ast$ and therefore $[\sigma]\in \I^2(F)$.
	In $\W(F)$ we thus have
	\begin{align*}
		[\varphi]&=[\psi\otimes\tau]=[\pi\otimes(\qf x\perp\tau')]\\
		&=[\pi\otimes(\tau'\perp\qf{a}\perp\qf{x,-a})]=[\pi\otimes\sigma]+[\pi\otimes\qf{x, -a}].
	\end{align*}
	As we have $[\varphi], [\pi\otimes\sigma]\in \I^n(F)$, we also have $[\pi\otimes\qf{x, -a}]\in \I^n(F)$. 
    But the {Arason-Pfister Hauptsatz} \ref{thm:APH} now implies the latter form to be hyperbolic. 
    We thus have $[\varphi]=[\pi\otimes\sigma]$ in $\W(F)$ and as the dimensions of both forms coincide, we even have the isometry $\varphi\cong\pi\otimes\sigma$.

    For $\dim(\tau)$ odd, we put $a=(-1)^{\frac d2+1}\cdot\det(\tau)$. 
    We then have
	\begin{align*}
		[\varphi]=[\pi\otimes \tau]=[\pi\otimes(\tau\perp\qf{a,-a})]=[\pi\otimes(\tau\perp\qf{a})]-[a\pi].
	\end{align*}
	As above, for $\sigma:=\tau\perp\qf{a}$, we have $[\sigma]\in \I^2(F)$ and thus ${[\pi\otimes\sigma]\in \I^n(F)}$. 
    Invoking the Arason-Pfister Hauptsatz \ref{thm:APH} again, this implies $-a\pi$ to be hyperbolic. 
    Hence $\pi$ would be hyperbolic, contradicting the hypothesis.
    
	As now the first claim is established, the second one is clear using the well known bound on the 2-Pfister number, see \cite[Chapter X. Exercise 4]{Lam2005}.
\end{proof}

As already stated in the introduction, generalised Albert forms of dimension $2^n+2^{n-1}$ will be those forms of main interest.
We thus devote the rest of this section to the study of such forms and start by proving a generalization of \cite[Proposition 4.1]{HoffmannI4} for arbitrary $n\geq2$. 
We will mainly use the same techniques as in the original article, but we will further use the Gap \Cref{thm:Gaps} that was only a conjecture and not known to be true when \cite{HoffmannI4} was published.

Before stating and proving the result, we would like to note that the case $n=2$ is trivial and that the case $n=3$ can essentially be found in \cite[Satz 14, Zusatz]{Pfister}. 

\begin{prop}\label{prop:GAclassification}
	Let $[\varphi]\in \I^n(F)$ with $\varphi$ an anisotropic form of dimension $\dim(\varphi)=2^n+2^{n-1}$ for some $n\in\N$ with $n\geq 2$.
    Then the following are equivalent:
	\begin{enumerate}[label=(\roman*)]
		\item\label{prop:GAclassification1} there are $\pi_1,\pi_2\in \GP_n(F)$ with $[\varphi]=[\pi_1]+[\pi_2]$, i.e. $\GP_n(\varphi)=2$;
		\item\label{prop:GAclassification2} there is some $\pi\in \GP_{n-2}(F)$ and an Albert form $\alpha$ with $\varphi\cong \pi\otimes\alpha$;
		\item\label{prop:GAclassification3} there are $\sigma_1,\sigma_2,\sigma_3\in \GP_{n-1}(F)$ with $\varphi\cong \sigma_1\perp\sigma_2\perp\sigma_3$, i.e. $\GP_{n-1}(\varphi)=3$;
		\item\label{prop:GAclassification4} there is some $\sigma\in \GP_{n-1}(F)$ with $\sigma\subseteq \varphi$;
		\item there is some Pfister neighbor $\psi\subseteq\varphi$ of dimension $2^{n-1}+1$.
	\end{enumerate}
\end{prop}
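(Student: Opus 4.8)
The plan is to prove the equivalences in a cycle, exploiting the fact that $n-2$, $n-1$, $n$ are three consecutive exponents so that divisibility by a Pfister form of foldness $n-2$ lets us reduce (via \Cref{lem:WLOGI2}) to a form in $\I^2(F)$, i.e. essentially to the classical Albert form situation. First I would establish the easy implications. For \follows{ii}{i}: if $\varphi\cong\pi\otimes\alpha$ with $\pi\in\GP_{n-2}(F)$ and $\alpha$ an Albert form, write $\alpha$ as a sum of two elements of $\GP_2(F)$ in the Witt ring (an anisotropic Albert form is Witt equivalent to a difference of two quaternion forms, hence to $[\rho_1]+[\rho_2]$ with $\rho_i\in\GP_2(F)$); tensoring with $\pi$ gives $[\varphi]=[\pi\otimes\rho_1]+[\pi\otimes\rho_2]$ with $\pi\otimes\rho_i\in\GP_n(F)$, so $\GP_n(\varphi)\le 2$, and it cannot be $1$ by the Arason--Pfister Hauptsatz \ref{thm:APH} since $\dim\varphi\ne 2^n$; hence \ref{prop:GAclassification1} holds. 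Similarly \follows{ii}{iii}: decompose the $6$-dimensional Albert form $\alpha\cong\qf{a_1,a_2,a_3}\perp\qf{a_4,a_5,a_6}$ appropriately into three binary pieces whose scalar multiples by suitable units become $2$-dimensional forms in $\GP_1$, tensor with $\pi\in\GP_{n-2}(F)$ to land three summands in $\GP_{n-1}(F)$, and check dimensions add up to $2^n+2^{n-1}$; this gives \ref{prop:GAclassification3}. The implications \follows{iii}{iv} and \follows{iv}{v} are formal: a summand $\sigma\in\GP_{n-1}(F)$ of $\varphi$ is in particular a subform, and such a $\sigma$ contains a Pfister neighbor of dimension $2^{n-1}+1$ (take $\sigma'\subseteq\sigma$ of that dimension), which is also a subform of $\varphi$.

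The substantial direction is \follows{v}{ii} (or, arranged as the cycle closes, \follows{i}{ii}), and this is where \Cref{lem:WLOGI2} and the Gap Theorem \ref{thm:Gaps} do the real work. Starting from a Pfister neighbor $\psi\subseteq\varphi$ of dimension $2^{n-1}+1$: let $\pi\in\GP_{n-1}(F)$ be the ambient Pfister form of which $\psi$ is a neighbor, and consider the complementary form $\varphi\cong\psi\perp\psi^{\perp}$ inside $\varphi$. One computes $[\varphi]-[\pi]$ in $\W(F)$; since $\psi$ is a neighbor of $\pi$, the form $\varphi\perp(-\pi)$ has anisotropic part of dimension strictly less than $\dim\varphi$, in fact at most $2^n+2^{n-1}-2$, and its Witt class lies in $\I^n(F)$. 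By the Gap Theorem \ref{thm:Gaps}, $\diman(\varphi\perp(-\pi))\in\{0, 2^{n+1}-2^{n+1},\dots\}$ forces the anisotropic part to have dimension $2^n$ or $0$ (it cannot be $0$ since then $\varphi$ would be a scalar of $\pi$, wrong dimension). So $\varphi\perp(-\pi)$ has anisotropic part $\rho$ of dimension $2^n$ with $[\rho]\in\I^n(F)$; by the Hauptsatz \ref{thm:APH}, $\rho\in\GP_n(F)$. Thus $[\varphi]=[\pi]+[\rho]$ with $\pi\in\GP_{n-1}(F)$, $\rho\in\GP_n(F)$. Now I would run a linkage argument using \Cref{thm:LinkTwoPfisterForms} applied to (scalar multiples of) $\pi$ and $\rho$: since $\dim(\pi\perp(-\rho))_{\mathrm{an}}=\diman(\varphi)=2^n+2^{n-1}<2^n+2^{n-1}+2^{n-1}$ is smaller than $\dim\pi+\dim\rho=2^{n-1}+2^n$, the two Pfister forms are not ``totally unlinked,'' so their linkage number $r\ge 1$; working out the possible values of $r$ from the dimension $2^n+2^{n-1}$ of the anisotropic sum pins down $r=n-2$, yielding a common link $\beta\in\GP_{n-2}(F)$ with $\pi\cong\beta\otimes\pi_1$, $\rho\cong\beta\otimes\rho_1$ for $\pi_1\in\GP_1(F)$, $\rho_1\in\GP_2(F)$. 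Then $[\varphi]=[\beta\otimes(\pi_1\perp(-\rho_1))]$, and $\pi_1\perp(-\rho_1)$ is (Witt equivalent to) a $6$-dimensional form of trivial discriminant; invoking \Cref{lem:WLOGI2} with the divisor $\beta\in\GP_{n-2}(F)$ shows $\varphi\cong\beta\otimes\alpha$ for some $\alpha$ with $[\alpha]\in\I^2(F)$ and $\dim\alpha=\dim\varphi/\dim\beta=6$, i.e. $\alpha$ is an Albert form. This establishes \ref{prop:GAclassification2}.

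For the remaining links of the cycle, \follows{i}{iv} can be obtained directly without passing through (ii): given $[\varphi]=[\pi_1]+[\pi_2]$ with $\pi_i\in\GP_n(F)$, apply \Cref{thm:LinkTwoPfisterForms} to $\pi_1,-\pi_2$; from $\diman(\pi_1\perp(-\pi_2))=\dim\varphi=2^n+2^{n-1}=2^{n+1}-2^{n-1}$ read off linkage number $r=n-1$, so there is a common link in $\GP_{n-1}(F)$, say $\sigma$, with $\pi_i\cong\sigma\otimes\qf{1,-c_i}$; then $[\varphi]=[\sigma\otimes\qf{c_2,-c_1}]$, and $\sigma\otimes\qf{c_2}\subseteq\varphi$ is the desired subform in $\GP_{n-1}(F)$ — here one checks anisotropy of the relevant piece using that $\varphi$ itself is anisotropic. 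The main obstacle I anticipate is the bookkeeping in the linkage steps: correctly translating the dimension $2^n+2^{n-1}$ of the anisotropic sum of two Pfister forms into the exact linkage number, and verifying that the resulting tensor factorizations have the claimed foldness (so that, e.g., $\rho_1$ genuinely lands in $\GP_2$, making $\alpha$ a genuine Albert form rather than merely a $6$-dimensional $\I^2$-form — these coincide, but the identification uses that any $6$-dimensional form in $\I^2(F)$ is an Albert form, which I would cite from the standard theory). The characteristic $\ne 2$ hypotheses and anisotropy of $\varphi$ are used throughout to keep dimensions of anisotropic parts under control and to apply \ref{thm:APH} and \ref{thm:Gaps}.
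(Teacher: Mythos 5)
Your overall architecture --- cycling through the five conditions using linkage (\Cref{thm:LinkTwoPfisterForms}), the Gap Theorem \ref{thm:Gaps}, the Hauptsatz \ref{thm:APH} and \Cref{lem:WLOGI2} --- is the same as the paper's, but two steps fail as written because of foldness and dimension errors. First, in (iv)$\Rightarrow$(v) you propose to find the Pfister neighbor \emph{inside} $\sigma$ by taking $\sigma'\subseteq\sigma$ of dimension $2^{n-1}+1$; this is impossible, since $\dim\sigma=2^{n-1}$. The correct move is to enlarge $\sigma$ by one diagonal entry $x$ of its complement in $\varphi$: then $\psi:=\sigma\perp\qf{x}$ is a neighbor of the $n$-fold Pfister form $\sigma\perp x\sigma$.

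Second, and more seriously, in (v)$\Rightarrow$(ii) you take the ambient Pfister form $\pi$ of the $(2^{n-1}+1)$-dimensional neighbor $\psi$ to lie in $\GP_{n-1}(F)$. A Pfister neighbor of dimension $2^{n-1}+1$ is necessarily a neighbor of an \emph{$n$-fold} Pfister form (it cannot embed into a form of dimension $2^{n-1}$), so $\pi\in\GP_n(F)$. With your choice the argument breaks in several places: $[\varphi\perp-\pi]$ need not lie in $\I^n(F)$, and in the linkage step one gets $\diman(\pi\perp-\rho)=2^{n-1}+2^n=\dim\pi+\dim\rho$, i.e.\ Witt index $0$, so by \Cref{thm:LinkTwoPfisterForms} there is \emph{no} common link, contradicting your claim $r\geq1$. (Tellingly, the bound $2^n+2^{n-1}-2$ and the conclusion $\dim\rho=2^n$ that you write down are the ones valid for $\pi\in\GP_n(F)$, while the conclusion $\pi_1\in\GP_1(F)$ is the one for $\pi\in\GP_{n-1}(F)$; the two halves of your paragraph are mutually inconsistent.) Once $\pi$ is taken $n$-fold, your argument becomes exactly the paper's: (v)$\Rightarrow$(i) via the Gap Theorem plus the Hauptsatz, then (i)$\Rightarrow$(ii) via linkage and \Cref{lem:WLOGI2}, with linkage number $n-2$ coming from $i_W=2^{n-2}$ and both cofactors $\pi_1,\rho_1$ lying in $\GP_2(F)$. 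The same off-by-one recurs in your alternative route (i)$\Rightarrow$(iv): the linkage number of $\pi_1$ and $\pi_2$ there is $n-2$, not $n-1$, so the common link is $(n-2)$-fold and the factorisation $\pi_i\cong\sigma\otimes\qf{1,-c_i}$ with $\sigma\in\GP_{n-1}(F)$ does not follow. In short, the plan is the right one and the remaining implications ((ii)$\Rightarrow$(i), (ii)$\Rightarrow$(iii), (iii)$\Rightarrow$(iv)) are fine, but the linkage bookkeeping you yourself flagged as the main obstacle is precisely where the write-up goes wrong and needs repair.
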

\begin{proof}
	\begin{description}[font=\normalfont]
		\item[(i)$\Rightarrow$(ii)] According to \Cref{thm:LinkTwoPfisterForms} the $n$-fold Pfister forms that $\pi_1$ respectively $\pi_2$ are similar to have linkage number $n-2$, i.e. there is some $\pi\in \GP_{n-2}(F)$ that divides both $\pi_1$ and $\pi_2$ and thus $\varphi$. 
        By counting dimensions, we have $\varphi\cong\pi\otimes\sigma$ for some quadratic form $\alpha$ of dimension $6$. 
        By \Cref{lem:WLOGI2} we may assume $[\alpha]\in\I^2(F)$, i.e. $\alpha$ is an Albert form.
		\item[(ii)$\Rightarrow$(iii)] We decompose $\alpha=\alpha_1\perp\alpha_2\perp\alpha_3$ with binary forms $\alpha_1,\alpha_2,\alpha_3$. 
        We now just have to put $\sigma_i:=\pi\otimes\alpha_i$ for $i\in\{1,2,3\}$.
		\item[(iii)$\Rightarrow$(iv)] This is trivial.
		\item[(iv)$\Rightarrow$(v)] We write $\varphi=\sigma\perp\qf{x,\ldots}$ for some $x\in F^\ast$. 
        Then $\psi:=\sigma\perp\qf{x}$ is a Pfister neighbor of dimension $2^{n-1}+1$ of the Pfister form $\sigma\perp x\sigma$ and clearly a subform of $\varphi$.
		\item[(v)$\Rightarrow$(i)] As $\psi$ is a Pfister neighbor of dimension $2^{n-1}+1$, there is some $\pi_1\in \GP_n(F)$ with $\psi\subseteq \pi_1$. 
        For $\pi_2:=(\varphi\perp-\pi_1)_\text{an}$, we clearly have 
        \[[\pi_2]=[\varphi]-[\pi_1]\in \I^n(F)\] 
        and this form is of dimension at most 
        \[2^n+2^{n-1}+2^n-2\cdot(2^{n-1}+1)=2^n+2^{n-1}-2.\] 
        By the Gap \Cref{thm:Gaps} and the Arason-Pfister Hauptsatz \ref{thm:APH} we have $\pi_2\in \GP_n(F)$ and the conclusion follows.
	\end{description}
\end{proof}

\begin{rem}
	The equivalence of \ref{prop:GAclassification2} and \ref{prop:GAclassification3} can also readily be deduced from \cite[Chapter X, Linkage Theorem 6.22]{Lam2005}.
\end{rem}

Let $d\in\N$ be a positive integer.
We define
\[\GA_n(F,d)=\{[\varphi]\in\W(F)\mid \diman(\varphi)=d, \varphi\in\GA_n(F)\}\]
and
\[\I^n(F,d)=\{[\varphi]\in\I^n(F)\mid \diman(\varphi)=d\}.\]

Note that while $\GA_n(F)$ is a set of quadratic forms, the sets $\GA_n(F, d)$ are defined to be sets of Witt classes.

For all $n, d$ we clearly have the inclusion $\GA_n(F,d)\subseteq\I^n(F,d)$.
We investigate under which circumstances we have equality and study particularly the case $d=2^n+2^{n-1}$.

For this choice of $d$, we already know by the results stated in the introduction that for all fields $F$, we have $\GA_n(F,d)=\I^n(F,d)$ for $n\in\{2,3\}$.

We will now introduce another property concerning quadratic forms in $\I^n(F)$ of dimension at most $d=2^n+2^{n-1}$ that correlates with the equality $\GA_n(F, d)=\I^n(F,d)$.

\begin{de}
	Let $n\in\N$ be an integer with $n\geq 2$. 
    We say a field $F$ has property $\Sim(n)$ if any two non-zero anisotropic forms $\varphi,\psi$ of dimension at most $2^n+2^{n-1}$ with 
    \[[\varphi],[\psi]\in\I^n(F)\text{ and }[\varphi]\equiv[\psi]\mod\I^{n+1}(F)\] 
are similar.
\end{de}

\begin{rem}
    We clearly could have defined $\Sim(n)$ also for $n=1$.
    This property is always fulfilled trivially since there are no anisotropic forms of dimension 3 in $\I(F)$.
\end{rem}

\begin{rem}
    It is well known that for $\pi_1,\pi_2\in\GP_n(F)$, we have that $\pi_1$ and $\pi_2$ are similar if and only if $[\pi_1]\equiv[\pi_2]\mod\I^{n+1}(F)$, see \cite[Chapter X. Corollary 5.4]{Lam2005}.

    Let now $\varphi$ be an anisotropic form of dimension $2^n+2^{n-1}$ with $[\varphi]\in\I^n(F)$ and $\pi\in\GP_n(F)$ be such that $[\varphi]\equiv[\pi]\mod\I^{n+1}(F)$.
    We then have $[\varphi\perp-\pi]\in\I^{n+1}(F)$ and the Gap \Cref{thm:Gaps} implies 
    \[[\varphi\perp-\pi]=[\pi']\]
    for some $(n+1)$-fold Pfister form $\pi'\in\GP_{n+1}(F)$.
    This implies $[\varphi]=[\pi'\perp\pi]$.
    Considering now the dimensions of the anisotropic parts, we obtain 
    \[i_W(\pi'\perp\pi)=\frac{2^{n+1}+2^n-(2^n+2^{n-1})}2=3\cdot2^{n-2}\] 
    which contradicts the linkage theory, see \Cref{thm:LinkTwoPfisterForms}.

    To verify the property $\Sim(n)$, it is thus enough to consider anisotropic forms of dimension exactly $2^n+2^{n-1}$.
\end{rem}

Using \Cref{thm:GA->Sim}, the following is clear:

\begin{cor}
    Let $n\geq2$ be an integer and $d=2^n+2^{n-1}$.
    If we have $\GA_n(F,d)=\I^n(F,d)$, then $F$ fulfills $\Sim(n)$.
\end{cor}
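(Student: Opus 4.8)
The plan is to deduce this corollary directly from \Cref{thm:GA->Sim} by showing that the hypothesis $\GA_n(F,d) = \I^n(F,d)$ forces any two anisotropic forms as in the definition of $\Sim(n)$ to be generalised Albert forms of the same dimension, at which point \Cref{thm:GA->Sim} applies verbatim. So suppose $\varphi, \psi$ are non-zero anisotropic forms with $\dim(\varphi), \dim(\psi) \leq 2^n + 2^{n-1}$, with $[\varphi], [\psi] \in \I^n(F)$ and $[\varphi] \equiv [\psi] \mod \I^{n+1}(F)$; we must show $\varphi$ and $\psi$ are similar.

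The first step is to reduce to the case $\dim(\varphi) = \dim(\psi) = 2^n + 2^{n-1}$. By the Gap \Cref{thm:Gaps}, an anisotropic form of dimension strictly less than $2^{n+1}$ with Witt class in $\I^n(F)$ has dimension $2^{n+1} - 2^k$ for some $k \in \{1, \dots, n+1\}$; combined with the dimension bound $\leq 2^n + 2^{n-1} = 2^{n+1} - 2^{n-1}$, the only possibilities are $k \geq n-1$, i.e. $\dim = 2^n$ (a form in $\GP_n(F)$) or $\dim = 2^n + 2^{n-1}$. If $\dim(\varphi) = \dim(\psi) = 2^n$, then both lie in $\GP_n(F)$ and are congruent modulo $\I^{n+1}(F)$, so they are similar by \cite[Chapter X. Corollary 5.4]{Lam2005} (as recalled in the preceding remark). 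If one of them, say $\psi$, has dimension $2^n$ while $\varphi$ has dimension $2^n + 2^{n-1}$, then $[\varphi] \equiv [\psi] \mod \I^{n+1}(F)$ with $\psi \in \GP_n(F)$; but the second Remark above shows precisely this situation is impossible (it would force $i_W(\pi' \perp \pi) = 3 \cdot 2^{n-2}$, contradicting linkage theory). Hence we are reduced to $\dim(\varphi) = \dim(\psi) = 2^n + 2^{n-1}$.

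The second step is where the hypothesis enters: since $[\varphi], [\psi] \in \I^n(F, d)$ with $d = 2^n + 2^{n-1}$, the assumed equality $\GA_n(F,d) = \I^n(F,d)$ gives $[\varphi], [\psi] \in \GA_n(F,d)$, so $\varphi$ and $\psi$ are anisotropic generalised Albert forms of dimension $2^n + 2^{n-1}$ satisfying $[\varphi] \equiv [\psi] \mod \I^{n+1}(F)$. Now \Cref{thm:GA->Sim} applies directly (note $n \geq 2$) and yields that $\varphi$ and $\psi$ are similar. Since $\varphi, \psi$ were arbitrary pairs witnessing a potential failure of $\Sim(n)$, this shows $F$ fulfills $\Sim(n)$.

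I do not anticipate a serious obstacle here: the corollary is essentially an unwinding of definitions together with the two cited results, and the only mildly delicate point is the dimension bookkeeping in the first step — making sure that the case of unequal dimensions (one form a Pfister form, the other genuinely $(2^n + 2^{n-1})$-dimensional) is ruled out, which is exactly the content of the Remark immediately preceding the statement. Everything else is an immediate invocation of \Cref{thm:Gaps} and \Cref{thm:GA->Sim}.
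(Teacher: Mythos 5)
Your proof is correct and follows exactly the route the paper intends: the reduction to forms of dimension exactly $2^n+2^{n-1}$ is the content of the remark immediately preceding the corollary, and the remaining step is the direct application of \Cref{thm:GA->Sim} to the two generalised Albert forms supplied by the hypothesis. The paper states the corollary as ``clear'' from \Cref{thm:GA->Sim}, and your write-up is a faithful (and more explicit) unwinding of that same argument.
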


Note that the validity of $\Sim(2)$, i.e. the fact that (classical) Albert forms whose Witt classes are congruent modulo $\I^3(F)$ are similar, is a very classical result by N. Jacobson, see \cite[Theorem 3.12]{MR0700981}. 
While Jacobson uses the theory of central simple algebras, a proof within the framework of quadratic forms is given in \cite{MR0931742}.

There is a further class of quadratic forms Witt equivalent to the sum of two Pfister forms that will be used later in the proof of \Cref{thm:SimGoingUp}.
This concept was introduced by D. Hoffmann in \cite{HoffmannTwistedPfister}.

\begin{de}
    Let $1\leq m<n$. A form $\varphi$ is called a \emph{twisted $(n,m)$-Pfister form} if there are anisotropic Pfister forms $\sigma\in\P_n(F), \pi\in\P_m(F)$ with linkage number $m-1$ and an element $a\in F^\ast$ such that $[a\varphi]=[\sigma]-[\pi]$.
\end{de}

Note that our definition of twisted Pfister forms is slightly more general than the origin one in that we also include scalar multiples.

Clearly, twisted $(n,m)$-Pfister forms have dimension $2^n$.
Our interest in twisted $(n,m)$-Pfister forms lies in the fact that their similarity can be read off an equivalence modulo $\I^{n+1}$, just as we have already seen for generalised Albert forms in \Cref{thm:GA->Sim}.

\begin{prop}[{\cite[Proposition 2.8]{HoffmannHalfNeighbor}}]\label{prop:TwistedPfisterHalfNeighborSimilar}
    Let $1\leq m<n$ and $\varphi,\psi$ be twisted $(n,m)$-Pfister forms over some field $F$.
    If $[\varphi]\equiv[\psi]\mod\I^{n+1}(F)$ then $\varphi$ and $\psi$ are similar.
\end{prop}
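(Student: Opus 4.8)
The plan is to reduce to a ``half-neighbor'' situation via the Arason--Pfister Hauptsatz, and then to exploit the linkage that is built into the definition of a twisted Pfister form. We may assume $\varphi,\psi$ are anisotropic of dimension $2^n$; if $\varphi\cong\psi$ there is nothing to prove, so assume $\varphi\not\cong\psi$. The form $\varphi\perp-\psi$ has dimension $2^{n+1}$ and Witt class in $\I^{n+1}(F)$. If it were isotropic but not hyperbolic, its anisotropic part would be a nonzero form of dimension $<2^{n+1}$ with class in $\I^{n+1}(F)$, contradicting \Cref{thm:APH}; hence $\varphi\perp-\psi$ is anisotropic, and the equality case of \Cref{thm:APH} yields an anisotropic $\rho\in\GP_{n+1}(F)$ with $\varphi\perp-\psi\cong\rho$ (so $\psi$ is a half-neighbor of $\varphi$).

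Next I unwind the twisted structure. Write $[a\varphi]=[\sigma]-[\pi]$ and $[b\psi]=[\sigma']-[\pi']$ with $\sigma,\sigma'\in\P_n(F)$ and $\pi,\pi'\in\P_m(F)$ anisotropic of linkage number $m-1$. Applying \Cref{thm:LinkTwoPfisterForms} to $\sigma,\pi$ gives $\sigma\cong\alpha\otimes\sigma_1$ and $\pi\cong\alpha\otimes\Pfister u$ with $\alpha\in\P_{m-1}(F)$, $\sigma_1\in\P_{n-m+1}(F)$, $u\in F^\ast$; anisotropy of $\varphi$ forces $(\sigma_1\perp-\Pfister u)_{\an}$ to have dimension $2^{n-m+1}$, and a dimension count upgrades $[a\varphi]=[\alpha\otimes(\sigma_1\perp-\Pfister u)]$ to an isometry $a\varphi\cong\alpha\otimes\mu$ with $\mu:=(\sigma_1\perp-\Pfister u)_{\an}$; in particular $\varphi$ is divisible by the anisotropic $(m-1)$-fold Pfister form $\alpha$. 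The ``small'' Pfister form is pinned down elementarily: since $[\varphi]\in\I^m(F)$ one has $[c\varphi]-[\varphi]=-[\Pfister c]\cdot[\varphi]\in\I^{m+1}(F)$ for every $c\in F^\ast$, so $[\pi]\equiv-[a\varphi]\equiv-[\varphi]\pmod{\I^{m+1}(F)}$ and likewise $[\pi']\equiv-[\psi]$; as $[\varphi]\equiv[\psi]\pmod{\I^{m+1}(F)}$ this gives $[\pi]\equiv[\pi']\pmod{\I^{m+1}(F)}$, hence $\pi\cong\pi'$ (two $m$-fold Pfister forms congruent modulo $\I^{m+1}(F)$ share the value $1$, so their difference is isotropic, hence hyperbolic by \Cref{thm:APH}).

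Now I produce a common Pfister factor and try to descend. Pass to $L:=F(\alpha)$. There $a\varphi\cong\alpha\otimes\mu$ becomes hyperbolic, so $\varphi_L$ is hyperbolic and $[\psi]_L=-[\rho]_L$ in $\W(L)$; since $\dim\psi=2^n<2^{n+1}=\dim\rho$, this forces $\rho_L$ to be isotropic, hence hyperbolic (it is similar to a Pfister form), and therefore $\psi_L$ is hyperbolic as well. Hyperbolicity over the function field of the Pfister form $\alpha$ shows that $\alpha$ divides each of $\rho$, $\psi$, $\sigma'$ and $\pi'$; writing $\varphi\cong\alpha\otimes\varphi_1$, $\psi\cong\alpha\otimes\psi_1$, $\rho\cong\alpha\otimes\rho_1$ with $\rho_1\in\GP_{n-m+2}(F)$ and $\dim\varphi_1=\dim\psi_1=2^{n-m+1}$, the relation $\varphi\perp-\psi\cong\rho$ together with the identifications of the previous step reduces the problem (after controlling an error term lying in $[\alpha]\cdot\W(F)$) to the case $m=1$, i.e.\ to twisted $(n-m+1,1)$-Pfister forms congruent modulo $\I^{n-m+2}(F)$. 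In that case $\qf1\perp a\varphi$ is, up to a scalar, a Pfister neighbor of an $(n+1)$-fold Pfister form $P_\varphi$, and similarly $\psi$ determines $P_\psi$; the isometry $\varphi\perp-\psi\cong\rho$ and the congruence force $P_\varphi$ and $P_\psi$ into a common similarity class, and an explicit computation with these two Pfister forms produces a scalar $d\in F^\ast$ with $\varphi\cong d\psi$.

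The first two steps are routine bookkeeping with the Hauptsatz and the linkage theorem, and the passage to $F(\alpha)$ is standard function-field theory. The genuine difficulty is the descent in the last step: dividing by the Pfister factor $\alpha$ is not canonical, so the ``reduced'' forms $\varphi_1,\psi_1$ are a priori only controlled modulo $[\alpha]\cdot\W(F)$, and one must either track this error term through an induction on $n-m$ or treat the case $n=m+1$ separately --- there the congruence modulo $\I^{n+1}(F)$ carries exactly one power of the fundamental ideal less than the argument above provides. Extracting the similarity factor $d$ in the base case, i.e.\ showing that the half-neighbor decomposition $\varphi\perp-\psi\cong\rho$ is ``balanced'' precisely because $\varphi$ and $\psi$ are twisted Pfister forms, is the crux of the proof.
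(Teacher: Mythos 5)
The paper does not reprove this statement; it is quoted directly from \cite[Proposition 2.8]{HoffmannHalfNeighbor}, so your attempt has to stand on its own. Your opening steps are correct and genuinely useful: the reduction via \Cref{thm:APH} to the half-neighbor situation $\varphi\perp-\psi\cong\rho\in\GP_{n+1}(F)$, the computation modulo $\I^{m+1}(F)$ showing $\pi\cong\pi'$, and the extraction (for $m\geq 2$) of a common anisotropic divisor $\alpha\in\P_{m-1}(F)$ of $\varphi$, $\psi$, $\rho$, $\sigma'$ and $\pi'$ by passing to $F(\alpha)$ are all sound.

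The argument stops, however, exactly where the proposition acquires content, and there are two concrete gaps. First, the ``reduction to $m=1$'' by cancelling $\alpha$ is not justified: writing $\varphi\cong\alpha\otimes\varphi_1$ and $\psi\cong\alpha\otimes\psi_1$, the quotient forms are determined only up to the annihilator of $[\alpha]$ in $\W(F)$ (not, as you write, modulo $[\alpha]\cdot\W(F)$), and --- more seriously --- the hypothesis $[\alpha\otimes(\varphi_1\perp-\psi_1)]\in\I^{n+1}(F)$ does not descend to $[\varphi_1\perp-\psi_1]\in\I^{n-m+2}(F)$; that would require a statement of the type $[\alpha]\cdot\W(F)\cap\I^{n+1}(F)=[\alpha]\cdot\I^{n-m+2}(F)$, which is a substantive divisibility assertion about powers of the fundamental ideal that you neither prove nor cite. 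You would also need to check that $\varphi_1,\psi_1$ can be chosen to be twisted $(n-m+1,1)$-Pfister forms. Second, even granting the reduction, the base case is not proved: ``an explicit computation with these two Pfister forms produces a scalar $d\in F^\ast$ with $\varphi\cong d\psi$'' is precisely the assertion of the proposition (note that $P_\varphi$ and $P_\psi$ lying in a common similarity class would not by itself yield $\varphi\sim\psi$), and no such computation is given. You flag this yourself as ``the crux of the proof''; as written, the proposal is a correct setup together with an accurate description of what remains to be done, not a proof.
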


\section{Quadratic Forms over Fields with a Discrete Valuation}

We will now recall some well known facts about quadratic forms over fields carrying a discrete valuation crucial to our further investigations. 
A more detailed exposition on the topic can be found e.g. in \cite[Section 19]{ElmanKarpenkoMerkurjev2008} or \cite[Chapter 6, § 2]{Scharlau}.

Let $R$ be a discrete valuation ring with residue field $F$ and quotient field $K$, both of characteristic not 2.
Let further $t$ be a \emph{uniformizing element} or shortly \emph{uniformizer} of $R$, i.e. a generator of the unique maximal ideal $\mathfrak m\subseteq R$.
We denote the residue map by $\overline{\phantom{x}}:R\to F$.

At some points in the text, we will assume $R$ to be \emph{2-Henselian}.
This means that if $x\in R^\ast$ is such that $\overline x$ is a square in $F$, then $x$ is a square in $R$.
This is fulfilled for example for complete discrete valuation rings, see \cite[Chapter II. (4.6) Hensel's Lemma]{MR1697859}.
In other situations, we will assume that the residue map $\overline{\phantom{x}}:R\to F$ has a \emph{section}, i.e. there is a (necessarily injective) ring homomorphism $t:F\to R\subseteq K$ such that for all $x\in R$, we have $\overline{t(\overline x)}=\overline x$.
In particular, by replacing $F$ with the isomorphic field $t(F)$, we can assume $F\subseteq K$ in this case. 

If $\varphi$ is a quadratic form over $K$, it is well known that $\varphi$ has a representation
\[\varphi\cong\varphi_1\perp t\varphi_2\]
where $\varphi_1,\varphi_2$ are \emph{unimodular}, i.e. they have a diagonalization such that each diagonal element is a unit in $R$. 

Applying the residue map $R\to F=R/\mathfrak m$ to the forms $\varphi_1,\varphi_2$, we obtain forms $\overline{\varphi_1}, \overline{\varphi_2}$ over $F$, called the \emph{first resp. second residue form} of $\varphi$.
Many properties of $\varphi$ can be read off its residue forms $\overline{\varphi_1},\overline{\varphi_2}$.

\begin{prop}[{\cite[Chapter 6, 2.6. Corollary]{Scharlau}, \cite[Lemma 19.5]{ElmanKarpenkoMerkurjev2008}}]\label{AnisotropCDV}
    Let $\varphi,\psi$ be two unimodular forms over $R$.
    \begin{enumerate}[label=(\alph*)]
        \item Let $\overline\varphi,\overline\psi$ be anisotropic over $F$. 
        Then $\varphi\perp t\psi$ is anisotropic over $K$.
        \item Let $R$ be 2-Henselian and $\varphi\perp t\psi$ be anisotropic over $K$. 
        Then $\overline\varphi$ and $\overline\psi$ are anisotropic.
    \end{enumerate} 
\end{prop}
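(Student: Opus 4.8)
The plan is to establish both parts by elementary valuation bookkeeping, the one device being that for a unimodular form $\rho\cong\qf{u_1,\ldots,u_r}$ over $R$ (so all $u_i\in R^\ast$) and a vector $x\in R^r$, the value $\rho(x)=\sum u_ix_i^2$ is a unit whenever $\overline x\neq 0$ and $\overline\rho$ is anisotropic, whereas $\rho(x)\in\mathfrak m^2$ whenever every $x_i\in\mathfrak m$. Throughout, let $v$ denote the valuation associated with $R$.

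For \emph{(a)}, I would suppose $\varphi\perp t\psi$ is isotropic over $K$ and, after clearing denominators, fix a nontrivial zero $(x,y)$ with $x\in R^r$, $y\in R^s$, and at least one coordinate among the $x_i,y_j$ a unit, so that $\overline x$ and $\overline y$ do not both vanish. Writing out $0=\varphi(x)+t\psi(y)$, first note $\psi(y)\in R$, hence $v(t\psi(y))\geq 1$ in all cases. If $\overline x\neq 0$, anisotropy of $\overline\varphi$ gives $v(\varphi(x))=0$, so the right-hand side is a unit, which is absurd. If $\overline x=0$, then every $x_i\in\mathfrak m$, so $v(\varphi(x))\geq 2$; but then $\overline y\neq 0$, and anisotropy of $\overline\psi$ forces $v(t\psi(y))=1$, so the right-hand side has valuation exactly $1$ and is again nonzero. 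Taking $\psi$ empty, this also records that for any unimodular $\rho$, anisotropy of $\overline\rho$ implies anisotropy of $\rho$ over $K$.

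For \emph{(b)}, since $\varphi\perp t\psi$ anisotropic over $K$ trivially forces $\varphi$ and $\psi$ to be anisotropic over $K$, it suffices to prove: if $\rho$ is a unimodular form over the $2$-Henselian ring $R$ and $\overline\rho$ is isotropic over $F$, then $\rho$ is isotropic over $K$. I would pick a primitive zero $\overline x$ of $\overline\rho$, lift it to a primitive $x\in R^n$, and use that the polar bilinear form $b$ of the unimodular form $\rho$ gives an isomorphism $R^n\cong(R^n)^\ast$; hence $b(x,\cdot)$ is a primitive functional and there is $y\in R^n$ with $c:=b(x,y)\in R^\ast$. Since $\rho(x)\in\mathfrak m$, the Gram matrix $\left(\begin{smallmatrix}\rho(x)&c\\c&\rho(y)\end{smallmatrix}\right)$ of $\{x,y\}$ has unit determinant, so $Rx+Ry$ is a unimodular binary orthogonal summand of $\rho$. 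The scalar $c^2-\rho(x)\rho(y)$ is a unit with residue $\overline{c}^2$, a square in $F$, so by $2$-Henselianity it is a square in $R\subseteq K$; as $c^2-\rho(x)\rho(y)=-\det\left(\begin{smallmatrix}\rho(x)&c\\c&\rho(y)\end{smallmatrix}\right)$, the binary form with that Gram matrix is isotropic over $K$, and therefore so is $\rho$, contradicting the hypothesis.

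The valuation comparison in (a) and the extraction of a unimodular binary summand in (b) are routine. The one genuinely delicate step---and the exact place where $2$-Henselianity is used in (b) but not needed in (a)---is lifting the square class of $c^2-\rho(x)\rho(y)$ from $F$ to $K$: over a general discrete valuation ring a unit with square residue need not be a square (e.g.\ $\qf{1,-u}$ for such a unit $u$ is anisotropic over $K$ while $\overline{\qf{1,-u}}$ is isotropic over $F$), so Hensel's lemma in the form that squares lift is precisely what makes the argument go through.
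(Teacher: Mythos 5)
The paper does not prove this proposition itself but cites it from Scharlau and Elman--Karpenko--Merkurjev, and your argument is a correct, self-contained proof along exactly the standard lines of those references: the three-way valuation comparison ($v=0$, $v=1$, $v\geq 2$) for part (a), and for part (b) the reduction to showing that a unimodular form with isotropic residue form becomes isotropic over $K$, carried out by splitting off a unimodular binary lattice whose discriminant is a unit with square residue and invoking $2$-Henselianity. Your closing remark correctly isolates where Hensel's lemma enters and why (b) fails without it.
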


The residue forms also behave well with respect to the powers of the fundamental ideal as the following result shows.

\begin{lem}[{\cite[Example 19.13, Lemma 19.14, Exercise 19.15]{ElmanKarpenkoMerkurjev2008}}]\label{lem:InValuationSequence}
    \begin{enumerate}[label=(\alph*)]
        \item\label{lem:InValuationSequence1} If $\varphi,\psi$ are unimodular forms with $[\varphi\perp t\psi]\in\I^n(K)$, we then have $[\varphi],[\psi]\in\I^{n-1}(K)$ and $[\overline\varphi],[\overline\psi]\in\I^{n-1}(F)$.
        In particular we have $[\varphi]\equiv[\psi]\mod\I^n(K)$ and $[\overline\varphi]\equiv[\overline\psi]\mod\I^n(F)$.
        \item\label{lem:InValuationSequence2} If $R$ is 2-Henselian, we have a split exact sequence
	    \begin{align*}
		  \begin{diagram}
			0 & \rTo & \I^n(F) & \rTo & \I^n(K) & \rTo & \I^{n-1}(F) & \rTo & 0
		\end{diagram}
	\end{align*}
	for all $n\in\N_+$, where the maps are given by lifting and taking the second residue class form.
    \end{enumerate}
\end{lem}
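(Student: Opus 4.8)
The plan is to prove the two assertions of \Cref{lem:InValuationSequence} in order, deducing (b) as a refinement of (a) in the $2$-Henselian case. For part \ref{lem:InValuationSequence1}, suppose $[\varphi\perp t\psi]\in\I^n(K)$ with $\varphi,\psi$ unimodular. The key tool is the classical \emph{second residue homomorphism} $\partial_t\colon\W(K)\to\W(F)$ (with respect to the uniformizer $t$), which sends the class $[\varphi\perp t\psi]$ to $[\overline\psi]$, together with the fact that $\partial_t(\I^n(K))\subseteq\I^{n-1}(F)$; this is the content of the cited \cite[Example 19.13, Lemma 19.14]{ElmanKarpenkoMerkurjev2008}. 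Thus $[\overline\psi]\in\I^{n-1}(F)$ immediately. To get $[\overline\varphi]\in\I^{n-1}(F)$, I would apply the same reasoning to the form $(\varphi\perp t\psi)$ after rescaling by $t$: namely $t(\varphi\perp t\psi)\cong t\varphi\perp t^2\psi\cong \psi\perp t\varphi$ in $\W(K)$ (since $t^2$ is a square in $K^\ast$), whose class still lies in $\I^n(K)$ because $[t]\in\I(K)$ would not preserve $\I^n$ — so instead one argues directly: $\partial_t$ applied to the \emph{unimodular part} is recovered by first applying $\partial_t$ to $\langle t\rangle\cdot(\varphi\perp t\psi)$, which returns $[\overline\varphi]$. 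Either way, both second residue classes land in $\I^{n-1}(F)$. Then $[\varphi],[\psi]\in\I^{n-1}(K)$ follows by viewing $\varphi,\psi$ as defined over $R$ and using that $\I^{n-1}(F)\to\I^{n-1}(K)$ is induced by scalar extension along $R\to K$ through the residue identification; more simply, the \emph{first} residue homomorphism $\W(K)\to\W(F)$ restricted to unimodular forms is an isomorphism onto its image compatible with the $\I$-filtration, and $[\varphi]\mapsto[\overline\varphi]\in\I^{n-1}(F)$ forces $[\varphi]\in\I^{n-1}(K)$. Finally, the congruences $[\varphi]\equiv[\psi]\bmod\I^n$ over both $K$ and $F$ follow because $[\varphi\perp t\psi]\in\I^n$ together with $[t\psi]\equiv[\psi]\bmod\I^n$ (as $[\langle\!\langle t\rangle\!\rangle\otimes\psi]=[\psi\perp -t\psi]\cdot(-1)$ lies in $\I^n$ once $[\psi]\in\I^{n-1}$) gives $[\varphi]-[\psi]\in\I^n(K)$, and applying the first residue map transports this to $F$.

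For part \ref{lem:InValuationSequence2}, assume $R$ is $2$-Henselian. The maps are: the inclusion $\I^n(F)\hookrightarrow\I^n(K)$ given by $[\overline\varphi]\mapsto[\varphi]$ via the residue identification (well-defined and injective because over a $2$-Henselian ring a unimodular form over $R$ is anisotropic over $K$ iff its residue is anisotropic over $F$, by \Cref{AnisotropCDV}(b), so nothing in the image becomes hyperbolic); and the second residue map $\I^n(K)\to\I^{n-1}(F)$, $[\varphi\perp t\psi]\mapsto[\overline\psi]$, which is well-defined with the stated target by part \ref{lem:InValuationSequence1}. Exactness at $\I^n(F)$ is injectivity, just argued. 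Exactness at $\I^{n-1}(F)$, i.e. surjectivity of the second residue map, follows by choosing for a given class in $\I^{n-1}(F)$ a representative unimodular form $\psi$ over $R$ (lift a diagonalization), and noting $[0\perp t\psi]\in\I^n(K)$ precisely because $[\psi]\in\I^{n-1}$; one must check $t\psi\in\I^n(K)$, which holds since $\langle t\rangle\cdot\I^{n-1}(K)\subseteq\I^n(K)$ is false in general, so instead I would take $\psi'$ unimodular over $R$ with $[\overline{\psi'}]$ the prescribed class and form $\tau\otimes\psi'$ for a suitable $1$-fold Pfister — more cleanly: since the prescribed class lies in $\I^{n-1}(F)$ it is a sum of $(n-1)$-fold Pfister forms $\overline{\rho_i}$ over $F$, lift each $\rho_i$ to a Pfister form over $R$, set $\psi=\sum\langle\!\langle t\rangle\!\rangle\otimes\rho_i$; then $\psi$ is a form over $R$ with $[\psi]\in\I^n(K)$ (being a sum of $n$-fold Pfister forms over $K$) whose second residue is the given class. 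That the sequence is \emph{split} then comes from the section $\I^{n-1}(F)\to\I^n(K)$ sending $[\overline\rho]\mapsto[\langle\!\langle t\rangle\!\rangle\otimes\rho]$ for $\rho$ a Pfister form over $R$ lifting $\overline\rho$, extended linearly; it is a genuine section since the second residue of $\langle\!\langle t\rangle\!\rangle\otimes\rho\cong\rho\perp(-t)\rho$ is $[-\overline\rho]=[\overline\rho]$ up to sign conventions matched to the definition of $\partial_t$.

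The main obstacle I anticipate is not any single deep step but the bookkeeping around the two residue homomorphisms: getting the signs and the choice of uniformizer consistent so that "taking the second residue class form" is literally the split surjection, and verifying that the section is additive and does not depend on the chosen Pfister lifts modulo the relations in $\W(F)$. The cleanest route is to cite \cite[Lemma 19.14]{ElmanKarpenkoMerkurjev2008} for the statement $\partial_t(\I^n(K))=\I^{n-1}(F)$ (equality, not just containment, in the $2$-Henselian case) and for the splitting, and to spend the written proof only on: (i) injectivity of the first map via \Cref{AnisotropCDV}(b), and (ii) the elementary observation that a sum of $n$-fold Pfister forms over $K$ of the shape $\langle\!\langle t\rangle\!\rangle\otimes\rho$ with $\rho$ defined over $R^\ast$ has first residue zero and second residue $[\overline\rho]$, which both exactness at $\I^{n-1}(F)$ and the splitting follow from formally. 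Part \ref{lem:InValuationSequence1} I would dispatch essentially by citation plus the one-line rescaling trick to handle $\overline\varphi$ symmetrically with $\overline\psi$.
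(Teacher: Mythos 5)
The paper gives no proof of this lemma at all --- it is quoted verbatim from \cite[Example 19.13, Lemma 19.14, Exercise 19.15]{ElmanKarpenkoMerkurjev2008} --- so there is no in-text argument to compare against; your proposal must be judged on its own. The residue statements in (a) and the surjectivity/splitting in (b) are handled essentially correctly (second residues of generators, lifting Pfister forms and tensoring with $\Pfister{t}$), but two steps are genuinely broken. First, your justification of $[\varphi],[\psi]\in\I^{n-1}(K)$ --- that the first residue homomorphism restricted to unimodular forms is ``an isomorphism onto its image compatible with the $\I$-filtration'', so that $[\overline\varphi]\in\I^{n-1}(F)$ forces $[\varphi]\in\I^{n-1}(K)$ --- fails for a general discrete valuation. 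Take $K=\qq(s)$ with the $s$-adic valuation and $\varphi=\psi=\langle 1,-(1+s)\rangle$: both are unimodular, and $\varphi\perp s\psi\cong\langle 1,s\rangle\otimes\langle 1,-(1+s)\rangle\cong\Pfister{-s,1+s}$ is hyperbolic because $1+s=1^2+s\cdot 1^2$, hence lies in $\I^n(K)$ for every $n$; yet $[\psi]=[\Pfister{1+s}]$ is a nonzero two-dimensional anisotropic class, so it is not in $\I^2(K)$. Thus that portion of (a) really requires the $2$-Henselian hypothesis, under which it is a consequence of the decomposition in (b) --- so deriving it from ``compatibility with the filtration'' is also circular. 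Only the claims about $[\overline\varphi],[\overline\psi]$ and the congruences are valid for arbitrary $R$.

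Second, in (b) you verify injectivity at $\I^n(F)$ and surjectivity onto $\I^{n-1}(F)$, but never address exactness at the middle term: if $\xi\in\I^n(K)$ has vanishing second residue, why is $\xi$ the lift of a class in $\I^n(F)$ rather than merely of a class in $\W(F)$? This needs an argument, e.g.\ write $\xi$ as a sum of $n$-fold Pfister generators each having at most one non-unit slot (using $\Pfister{ta,tb}=\Pfister{ta,-ab}$), use Henselianness to conclude that the $t$-part vanishes, and check that the surviving unimodular part regroups into $n$-fold Pfister forms over $F$. Relatedly, your argument that $[\overline\varphi]\in\I^{n-1}(F)$ in (a) is left dangling: you correctly observe that multiplication by $\langle t\rangle$ does not preserve $\I^n(K)$, but then assert the conclusion anyway. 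A clean fix is to note that $\Pfister{t}\otimes(\varphi\perp t\psi)\cong(\varphi\perp-\psi)\perp t(\psi\perp-\varphi)$ lies in $\I^{n+1}(K)$, so its second residue $[\overline\psi]-[\overline\varphi]$ lies in $\I^n(F)$, which gives both $[\overline\varphi]\in\I^{n-1}(F)$ and the congruence over $F$ at once; alternatively, compute both residues on the generators above.
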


We now recall some facts relating the Pfister number of forms over $K$ and the Pfister numbers of its residue forms.
Note that these were stated originally only for complete discrete valuations, but the proofs transfer to the situation of a 2-Henselian valuation.

\begin{cor}[{\cite[Corollary 3.2]{MR4554566}}]\label{cor:LiftPfisterRepresentation}
	Let $R$ be 2-Henselian and $\varphi$ be a unimodular form over $R$ with $[\varphi]\in \I^n(F)$. 
    Then, the $n$-Pfister number of $\varphi$ over $F$ and of $\overline\varphi$ over $K$ coincide.
\end{cor}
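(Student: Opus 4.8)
The plan is to show that the $n$-Pfister number of $\overline\varphi$ over $F$ coincides with the $n$-Pfister number of $\varphi$ viewed as a form over $K$ (which we write $\varphi_K$), by proving the two inequalities $\GP_n(\varphi_K)\le\GP_n(\overline\varphi)$ and $\GP_n(\overline\varphi)\le\GP_n(\varphi_K)$ in turn. Note first that since $\varphi$ is unimodular and $[\overline\varphi]\in\I^n(F)$, the class $[\varphi_K]$ lies in $\I^n(K)$ by \Cref{lem:InValuationSequence}, so both Pfister numbers are defined.

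The inequality $\GP_n(\varphi_K)\le\GP_n(\overline\varphi)$ is the easy one and does not use $2$-Henselianity. I would invoke the lifting ring homomorphism $s\colon\W(F)\to\W(K)$ that sends the class of a form diagonalised over $F$ to the class of a unit lift over $R$ (see \cite[\S 19]{ElmanKarpenkoMerkurjev2008} or \cite[Chapter 6, \S 2]{Scharlau}); it is well defined because $R^\ast\to F^\ast$ is surjective, it maps classes of forms in $\GP_n(F)$ to classes of forms in $\GP_n(K)$, and it satisfies $s([\overline\varphi])=[\varphi_K]$ because $\varphi$ is unimodular. Applying $s$ to an optimal decomposition $[\overline\varphi]=[\rho_1]+\dots+[\rho_k]$ with $\rho_i\in\GP_n(F)$ and $k=\GP_n(\overline\varphi)$ produces a decomposition of $[\varphi_K]$ into $k$ classes of forms in $\GP_n(K)$.

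For the reverse inequality I would work with the first and second residue homomorphisms $\partial_1,\partial_2\colon\W(K)\to\W(F)$ attached to the uniformizer $t$; for $2$-Henselian $R$ these are additive and jointly induce an isomorphism $\W(K)\cong\W(F)\oplus\W(F)$ of abelian groups (Springer's theorem; compare \Cref{lem:InValuationSequence}). Since $\varphi$ is unimodular, $\partial_1[\varphi_K]=[\overline\varphi]$ and $\partial_2[\varphi_K]=0$. Take an optimal decomposition $[\varphi_K]=[\pi_1]+\dots+[\pi_\ell]$ with $\pi_i\in\GP_n(K)$ and $\ell=\GP_n(\varphi_K)$. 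Using that $R^\ast/R^{\ast 2}\to F^\ast/F^{\ast 2}$ is bijective ($2$-Henselianity enters here) together with the identity $\Pfister{a,b}\cong\Pfister{a,-ab}$, every $\pi_i$ is isometric over $K$ to one of $e\Pfister{u_1,\dots,u_n}$, $te\Pfister{u_1,\dots,u_n}$, $e\Pfister{u_1,\dots,u_{n-1}}\otimes\qf{1,-tu_n}$, $te\Pfister{u_1,\dots,u_{n-1}}\otimes\qf{1,-tu_n}$, with $e,u_j\in R^\ast$. A direct residue computation in these four cases shows that, splitting $\{1,\dots,\ell\}$ accordingly into index sets $A$, $B$, $C$, one has: $\partial_1[\pi_i]=[\rho_i]$ with $\rho_i\in\GP_n(F)$ and $\partial_2[\pi_i]=0$ for $i\in A$; $\partial_1[\pi_i]=0$ and $\partial_2[\pi_i]=[\rho_i']$ with $\rho_i'\in\GP_n(F)$ for $i\in B$; and $\partial_1[\pi_i]=[\tau_i]$, $\partial_2[\pi_i]=[c_i\tau_i]$ with $\tau_i\in\GP_{n-1}(F)$ and $c_i\in F^\ast$ for $i\in C$.

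Applying $\partial_1$ and $\partial_2$ to $[\varphi_K]=\sum_i[\pi_i]$ and using additivity then yields
\[[\overline\varphi]=\sum_{i\in A}[\rho_i]+\sum_{i\in C}[\tau_i],\qquad 0=\sum_{i\in B}[\rho_i']+\sum_{i\in C}[c_i\tau_i].\]
The step I expect to be the main obstacle is handling the ramified indices $i\in C$: a ramified $n$-fold Pfister form over $K$ a priori contributes only an $(n-1)$-fold Pfister form to the residue, and the point is that the constraint $\partial_2[\varphi_K]=0$ forced by unimodularity lets one upgrade these contributions without increasing their number. Concretely, for $i\in C$ one has $[\tau_i]+[c_i\tau_i]=[\tau_i\otimes\qf{1,c_i}]=[\tau_i\otimes\Pfister{-c_i}]$, the class of a form in $\GP_n(F)$; solving for $[\tau_i]$, substituting into the first identity and eliminating $\sum_{i\in C}[c_i\tau_i]$ via the second gives
\[[\overline\varphi]=\sum_{i\in A}[\rho_i]+\sum_{i\in B}[\rho_i']+\sum_{i\in C}[\tau_i\otimes\Pfister{-c_i}],\]
which exhibits $[\overline\varphi]$ as a sum of $|A|+|B|+|C|=\ell$ classes of forms in $\GP_n(F)$. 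Hence $\GP_n(\overline\varphi)\le\ell=\GP_n(\varphi_K)$, and together with the second paragraph this gives $\GP_n(\overline\varphi)=\GP_n(\varphi_K)$.
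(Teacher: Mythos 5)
Your argument is correct, and it is essentially the standard one: the paper itself gives no proof of this statement but cites it from \cite[Corollary 3.2]{MR4554566}, where the same two inequalities are obtained by lifting a decomposition upward and by pushing a decomposition down via the residue homomorphisms after sorting the $\pi_i$ into unramified and ramified types. Your reassembly step for the ramified indices --- using $\partial_2[\varphi_K]=0$ to combine $[\tau_i]+[c_i\tau_i]=[\tau_i\otimes\Pfister{-c_i}]\in\GP_n(F)$ without increasing the count --- is exactly the right point and is carried out correctly. One small correction: the ``easy'' direction does use $2$-Henselianity. Knowing that $R^\ast\to F^\ast$ is surjective only lets you choose \emph{some} unit lift of each $\rho_i$; to conclude that $[\varphi_K]=\sum_i[\tilde\rho_i]$ you must know that a unimodular form whose residue is hyperbolic is itself hyperbolic over $K$, which is Springer's theorem for $2$-Henselian valuations and fails in general (e.g.\ $\qf{1,1,1,1}$ over $\qq$ with the $3$-adic valuation has hyperbolic residue over $\ff_3$ but is anisotropic). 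Conversely, the ``hard'' direction as you wrote it only needs the residue homomorphisms, which exist for any discrete valuation with $2\in R^\ast$; so your bookkeeping of where the hypothesis enters is reversed, though this does not affect the validity of the proof under the stated assumptions.
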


\begin{prop}[{\cite[Proposition 3.3]{MR4554566}}]\label{prop:UnimodMalPfister}
	Let $R$ be 2-Henselian and $\psi$ be a unimodular form over $R$ with $[\psi]\in \I^{n-1}(K)$.
    For the form $\varphi=\Pfister t\otimes\psi$, we have ${\GP_{n-1}(\psi)=\GP_n(\varphi)}$.
\end{prop}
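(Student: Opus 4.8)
The plan is to establish the inequalities $\GP_n(\varphi)\le\GP_{n-1}(\psi)$ and $\GP_{n-1}(\psi)\le\GP_n(\varphi)$ separately; here I read $\GP_{n-1}(\psi)$ as the $(n-1)$-Pfister number of $\psi$ over $K$, which by \Cref{cor:LiftPfisterRepresentation} coincides with that of the first residue form $\ovl\psi$ over $F$. This is legitimate since $[\varphi]=[\Pfister t]\cdot[\psi]\in\I^n(K)$ and, in the decomposition $\varphi\cong\psi\perp t(-\psi)$, the first and second residue forms of $\varphi$ are $\ovl\psi$ and $-\ovl\psi$; as the second residue form of an $\I^n(K)$-class lies in $\I^{n-1}(F)$ by \Cref{lem:InValuationSequence}(a), we get $[\ovl\psi]\in\I^{n-1}(F)$ as needed. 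The first inequality is then immediate from \Cref{prop:PfisterNumberMult}, applied over $K$ with $a=t$ and with the fundamental ideal replaced by $\I^{n-1}(K)$.

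For the reverse inequality, set $k=\GP_n(\varphi)$ and fix a representation $[\varphi]=[\rho_1]+\dots+[\rho_k]$ with $\rho_i\in\GP_n(K)$, say $\rho_i=c_i\tau_i$ with $\tau_i\in\P_n(K)$. Since every element of $K^\ast$ is a square times either a unit or $t$ times a unit, and since $\Pfister{a,b}\cong\Pfister{a,-ab}$ \cite[Chapter X]{Lam2005}, I may normalise so that $c_i\in R^\ast\cup tR^\ast$ and $\tau_i\cong\sigma_i\otimes\Pfister{d_i}$ for some unimodular $\sigma_i\in\P_{n-1}(K)$ and some $d_i\in R^\ast\cup tR^\ast$. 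Expanding $\sigma_i\otimes\Pfister{d_i}=\sigma_i\perp(-d_i)\sigma_i$ and using that $t^2$ is a square, I read off, in each of the four cases determined by whether $c_i$ and $d_i$ are units, an explicit expression $\rho_i\cong\rho_i'\perp t\rho_i''$ with $\rho_i',\rho_i''$ unimodular, hence the residue forms $\ovl{\rho_i'},\ovl{\rho_i''}$: when $c_i$ and $d_i$ are both units, one of the two residue forms is $0$ and the other a scalar multiple of $\ovl{\tau_i}\in\P_n(F)$; in the remaining three cases each of $\ovl{\rho_i'},\ovl{\rho_i''}$ is either $0$ or a scalar multiple of $\ovl{\sigma_i}\in\P_{n-1}(F)$. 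By \Cref{lem:InValuationSequence}(a) the classes $[\ovl{\rho_i'}],[\ovl{\rho_i''}]$ lie in $\I^{n-1}(F)$, so their $(n-1)$-Pfister numbers $a_i:=\GP_{n-1}(\ovl{\rho_i'})$ and $b_i:=\GP_{n-1}(\ovl{\rho_i''})$ make sense; recalling that the Witt class of a scalar multiple of an $n$-fold Pfister form over $F$ is a sum of the classes of at most two forms from $\GP_{n-1}(F)$ (split off the last slot), a short inspection of the four cases shows $a_i+b_i\le 2$ in all of them.

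Taking the first resp. second residue form is additive on Witt classes, so applying both operations to $[\varphi]=\sum_i[\rho_i]$ and inserting the residue forms of $\varphi$ computed above yields $[\ovl\psi]=\sum_i[\ovl{\rho_i'}]$ and $-[\ovl\psi]=\sum_i[\ovl{\rho_i''}]$. By subadditivity of the Pfister number this gives $\GP_{n-1}(\ovl\psi)\le\sum_i a_i$ and $\GP_{n-1}(\ovl\psi)=\GP_{n-1}(-\ovl\psi)\le\sum_i b_i$, hence $2\,\GP_{n-1}(\ovl\psi)\le\sum_i(a_i+b_i)\le 2k$, i.e. $\GP_{n-1}(\psi)=\GP_{n-1}(\ovl\psi)\le\GP_n(\varphi)$. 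Combined with the first inequality, this is the assertion.

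The step I expect to be the main obstacle is the case analysis of the residue forms of an arbitrary $\rho\in\GP_n(K)$: what must be checked by hand is that the ``bad'' situation, in which a residue form is a scalar multiple of a full $n$-fold Pfister form over $F$ and hence only controllable by \emph{two} forms from $\GP_{n-1}(F)$, occurs for at most one of the two residue forms of $\rho$, and that when it does the other residue form vanishes. Everything else — additivity of the residue maps, the split exact sequence of \Cref{lem:InValuationSequence}, and the bound $\GP_{n-1}\le 2$ for scalar multiples of $n$-fold Pfister forms over $F$ — is routine.
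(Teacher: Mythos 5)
The paper does not actually prove this proposition: it is imported verbatim from \cite[Proposition 3.3]{MR4554566}, so there is no in-text argument to measure yours against. Your proof is correct and self-contained. The inequality $\GP_n(\varphi)\leq\GP_{n-1}(\psi)$ is indeed immediate from \Cref{prop:PfisterNumberMult} applied with $a=t$, and the identification $\GP_{n-1}(\psi)=\GP_{n-1}(\ovl\psi)$ via \Cref{cor:LiftPfisterRepresentation} is the only place 2-Henselianness enters. For the converse, the genuine content is exactly where you locate it: a summand in $\GP_n(K)$ need not have residues controlled by single forms in $\GP_{n-1}(F)$, since a unimodular (or $t$ times unimodular) $n$-fold Pfister summand contributes a full element of $\GP_n(F)$ to one residue, costing \emph{two} $(n-1)$-fold Pfister forms. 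Your normalisation $\rho_i\cong c_i\,\sigma_i\otimes\Pfister{d_i}$ (using $\Pfister{tu,tv}\cong\Pfister{tu,-uv}$ to push all non-units into at most one slot) is valid, the four-case bound $a_i+b_i\leq2$ checks out, and the averaging $2\,\GP_{n-1}(\ovl\psi)\leq\sum_i(a_i+b_i)\leq2k$ over \emph{both} residue homomorphisms is precisely what absorbs the expensive case, because there the other residue vanishes. Two small points deserve an explicit word in a final write-up, though neither is a gap: the well-definedness and additivity of the residue homomorphisms $\partial_1,\partial_2$ on $\W(K)$ (Springer), which you invoke when passing from your chosen decompositions $\rho_i\cong\rho_i'\perp t\rho_i''$ to identities of Witt classes; and the fact that $\GP_{n-1}(-\ovl\psi)=\GP_{n-1}(\ovl\psi)$ because $\GP_{n-1}(F)$ is closed under scaling.
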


\section{Going down Theorems}

During this section, let $R$ be a discrete valuation ring with residue field $F$ and quotient field $K$ and let $n$ be an integer $\geq3$.
We assume that we can lift forms in $\I^k(F)$ to forms in $\I^k(K)$ for $k\in\{n-1, n\}$.
This is clearly fulfilled if the residue map $R\to F$ has a section.
It further known to be true if $R$ is 2-Henselian by \Cref{lem:InValuationSequence} \ref{lem:InValuationSequence2} in which case the lift is even unique.
Finally, we fix a uniformizer $t$.

\begin{prop}\label{thm:GAdown}
    Let $n\geq3$ and $d=2^n+2^{n-1}$. 
    If we have $\GA_n(K, d)=\I^n(K, d)$, we have $\GA_n(F, d)=\I^n(F, d)$ and $\GA_{n-1}(F, \frac d2)=\I^{n-1}(F, \frac d2)$.
\end{prop}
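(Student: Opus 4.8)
The plan is to prove the two asserted equalities by the same mechanism: starting from a form over $F$ lying in the relevant power of the fundamental ideal and of the relevant dimension, produce an anisotropic companion form over $K$ of dimension $d=2^n+2^{n-1}$ lying in $\I^n(K)$, apply the hypothesis $\GA_n(K,d)=\I^n(K,d)$ to bound its $n$-Pfister number by $2$, and push this bound back down to $F$ via \Cref{cor:LiftPfisterRepresentation} and \Cref{prop:UnimodMalPfister}. Since the inclusions $\GA_n(F,d)\subseteq\I^n(F,d)$ and $\GA_{n-1}(F,\tfrac d2)\subseteq\I^{n-1}(F,\tfrac d2)$ are automatic, only the reverse inclusions need proof.

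For $\GA_n(F,d)=\I^n(F,d)$, let $\psi$ be an anisotropic form over $F$ with $[\psi]\in\I^n(F)$ and $\dim(\psi)=d$. By the standing assumption of this section I would lift $[\psi]$ to a unimodular form $\varphi$ over $R$ with $\overline\varphi\cong\psi$ and $[\varphi]\in\I^n(K)$. Since the second residue form of $\varphi$ is trivial and its first residue form $\psi$ is anisotropic, \Cref{AnisotropCDV}(a) shows that $\varphi$ is anisotropic over $K$, so $\diman(\varphi)=d$ and hence $[\varphi]\in\I^n(K,d)=\GA_n(K,d)$. In particular $\GP_n(\varphi)\leq 2$, and \Cref{cor:LiftPfisterRepresentation} yields $\GP_n(\psi)=\GP_n(\varphi)\leq 2$. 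Since $2^n<\dim(\psi)<2^{n+1}$, the bound $\GP_n(\psi)\leq 2$ says precisely that $\psi$ is a generalised Albert form of degree $n$, so $[\psi]\in\GA_n(F,d)$.

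For $\GA_{n-1}(F,\tfrac d2)=\I^{n-1}(F,\tfrac d2)$, observe that $\tfrac d2=2^{n-1}+2^{n-2}$ and $n-1\geq 2$. Let $\chi$ be anisotropic over $F$ with $[\chi]\in\I^{n-1}(F)$ and $\dim(\chi)=2^{n-1}+2^{n-2}$, and lift it to a unimodular $\chi'$ over $R$ with $\overline{\chi'}\cong\chi$ and $[\chi']\in\I^{n-1}(K)$. Put $\varphi:=\Pfister t\otimes\chi'\cong\chi'\perp t(-\chi')$. Its first and second residue forms are $\chi$ and $-\chi$, both anisotropic, so $\varphi$ is anisotropic over $K$ by \Cref{AnisotropCDV}(a); moreover $\dim(\varphi)=2\dim(\chi)=d$ and $[\varphi]\in\I(K)\cdot\I^{n-1}(K)\subseteq\I^n(K)$. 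Hence $[\varphi]\in\I^n(K,d)=\GA_n(K,d)$, so $\GP_n(\varphi)\leq 2$, and since $\diman(\varphi)=d\notin\{0,2^n\}$ in fact $\GP_n(\varphi)=2$. Now \Cref{prop:UnimodMalPfister} gives $\GP_{n-1}(\chi')=\GP_n(\varphi)=2$ over $K$, and \Cref{cor:LiftPfisterRepresentation} transports this to $\GP_{n-1}(\chi)=2$ over $F$. As $2^{n-1}<\dim(\chi)<2^n$, this means $\chi\in\GA_{n-1}(F)$, i.e. $[\chi]\in\GA_{n-1}(F,\tfrac d2)$.

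What remains is routine bookkeeping: that the lifts can be taken unimodular over $R$ with the prescribed residue isometry class — immediate when the residue map has a section, and otherwise extracted from the uniqueness in \Cref{lem:InValuationSequence} together with \Cref{AnisotropCDV} (after splitting off any hyperbolic part, whose presence would force the residue to be isotropic), and that \Cref{cor:LiftPfisterRepresentation} and \Cref{prop:UnimodMalPfister} apply in the situation at hand. I expect the only genuinely delicate point to be the anisotropy of the companion form over $K$: this is exactly where \Cref{AnisotropCDV}(a) enters, and for the second equality it is crucial that $\Pfister t\otimes\chi'$ is arranged so that \emph{both} of its residue forms are (copies of) the anisotropic form $\chi$. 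Everything else is controlled by the dimension identity $d=2^n+2^{n-1}$ and the definitions; one may alternatively phrase the descent using the equivalent conditions of \Cref{prop:GAclassification} instead of the Pfister number, but the argument is the same.
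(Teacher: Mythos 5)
Your proof is correct and follows essentially the same route as the paper: lift the $F$-form to $K$ (directly for the $\I^n$ case, and as $\Pfister t\otimes\chi'$ for the $\I^{n-1}$ case), invoke the hypothesis over $K$, and descend the $n$-Pfister number via \Cref{cor:LiftPfisterRepresentation} and \Cref{prop:UnimodMalPfister}. Your extra attention to the anisotropy of the lifted form (via \Cref{AnisotropCDV}), which is needed to place its Witt class in $\I^n(K,d)$, is a point the paper only makes explicit in the second case.
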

\begin{proof}
    We start with showing the equality $\GA_n(F, d)=\I^n(F,d)$ for $F$.
	Let $[\overline\varphi]\in \I^n(F)$ with $\overline\varphi$ an anisotropic form over $F$ of dimension $2^n+2^{n-1}$. 
    Let $\varphi$ be an $\I^n(K)$-lift of $\overline\varphi$.
    Thus there are Pfister forms $\pi_1,\pi_2\in \GP_n(K)$ such that we have 
    \[[\varphi]=[\pi_1]+[\pi_2]\in \W(K).\] 
    By \Cref{cor:LiftPfisterRepresentation}, $\overline\varphi$ satisfies {\Cref{prop:GAclassification}~\ref{prop:GAclassification1}}, which shows $\overline\varphi\in\GA_n(F)$ and thus the asserted equality.

    To show the equality $\GA_{n-1}(F, \frac d2)=\I^{n-1}(F, \frac d2)$, let now $[\overline\psi]\in \I^{n-1}(F)$ where $\overline\psi$ is an anisotropic form over $F$ of dimension $\frac d2=2^{n-1}+2^{n-2}$. 
    Let $\psi$ be an $\I^{n-1}(K)$-lift of $\psi.$
    Then $\Pfister t\otimes\psi$ is an anisotropic form of dimension $2^n+2^{n-1}$, see \Cref{AnisotropCDV}, and we clearly have 
    \[[\Pfister t\otimes\psi]\in\I^n(K).\] 
    This Witt class thus satisfies \Cref{prop:GAclassification} \ref{prop:GAclassification1}, i.e. $\Pfister t\otimes\psi$
    is Witt equivalent to the sum of two $n$-fold Pfister forms over $K$. 
    By \Cref{cor:LiftPfisterRepresentation} and \Cref{prop:UnimodMalPfister}, we know that already $\overline\psi$ satisfies \Cref{prop:GAclassification} \ref{prop:GAclassification1} which concludes the proof.
\end{proof}

We now prove a going down theorem for the property $\Sim$.

\begin{prop}\label{prop:SimGoDown}
    Let $R$ be 2-Henselian, $n\geq3$ be an integer and let $K$ fulfill $\Sim(n)$. 
    Then $F$ fulfills $\Sim(n-1)$ and $\Sim(n)$.
\end{prop}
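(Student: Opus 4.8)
The plan is to reduce the assertion for $F$ to the hypothesis $\Sim(n)$ for $K$ via the lifting/residue machinery, treating the two conclusions $\Sim(n-1)$ and $\Sim(n)$ separately but by a common strategy. For both, I would start with two anisotropic forms $\overline\varphi,\overline\psi$ over $F$ of dimension at most $2^{n}+2^{n-1}$ (resp.\ $2^{n-1}+2^{n-2}$) with $[\overline\varphi],[\overline\psi]\in\I^{k}(F)$ and $[\overline\varphi]\equiv[\overline\psi]\bmod\I^{k+1}(F)$, and manufacture two forms over $K$ to which $\Sim(n)$ applies; then I would pull the resulting similarity back down to $F$.

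First, the case $\Sim(n)$. Since $R$ is $2$-Henselian, by \Cref{lem:InValuationSequence}\,\ref{lem:InValuationSequence2} there are unique unimodular lifts $\varphi,\psi$ over $R$ of $\overline\varphi,\overline\psi$ with $[\varphi],[\psi]\in\I^{n}(K)$; by \Cref{AnisotropCDV} they are anisotropic over $K$ and have the same dimensions as $\overline\varphi,\overline\psi$. The congruence $[\overline\varphi]\equiv[\overline\psi]\bmod\I^{n+1}(F)$ lifts: $[\overline\varphi\perp-\overline\psi]\in\I^{n+1}(F)$, and lifting this class and using uniqueness of the lift gives $[\varphi\perp-\psi]\in\I^{n+1}(K)$, i.e.\ $[\varphi]\equiv[\psi]\bmod\I^{n+1}(K)$. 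Now $\Sim(n)$ for $K$ yields some $c\in K^{\ast}$ with $\varphi\cong c\psi$ over $K$. Taking the valuation of $c$ into account — write $c=u$ or $c=ut$ with $u\in R^{\ast}$ — and comparing with the canonical decompositions of $\varphi$ and $c\psi$ into unimodular-plus-$t$-times-unimodular parts, one sees that the scalar must be a unit (otherwise the first and second residue forms get swapped, but here both forms are unimodular with zero second residue, so the $t$-part on each side is hyperbolic and a comparison of dimensions forbids $v(c)$ odd). Hence $\varphi\cong u\psi$ over $R$, and applying the residue map gives $\overline\varphi\cong\overline u\,\overline\psi$ over $F$. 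This is the desired similarity.

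Next, $\Sim(n-1)$. Given $\overline\varphi,\overline\psi$ anisotropic over $F$ of dimension at most $2^{n-1}+2^{n-2}$ with $[\overline\varphi],[\overline\psi]\in\I^{n-1}(F)$ and $[\overline\varphi]\equiv[\overline\psi]\bmod\I^{n}(F)$, I would take their unimodular $\I^{n-1}(K)$-lifts $\varphi,\psi$ and form $\Pfister t\otimes\varphi$ and $\Pfister t\otimes\psi$ over $K$: these lie in $\I^{n}(K)$, are anisotropic by \Cref{AnisotropCDV}, and have dimension at most $2(2^{n-1}+2^{n-2})=2^{n}+2^{n-1}$. Moreover $[\Pfister t\otimes\varphi]-[\Pfister t\otimes\psi]=[\Pfister t\otimes(\varphi\perp-\psi)]\in\I^{n+1}(K)$ because $[\varphi\perp-\psi]\in\I^{n}(K)$ (again by lifting the congruence over $F$). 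Thus $\Sim(n)$ over $K$ gives $c\in K^{\ast}$ with $\Pfister t\otimes\varphi\cong c\,(\Pfister t\otimes\psi)$. The main point is now to descend: both sides have the canonical decomposition $\varphi\perp t\,(-\varphi)$ resp.\ $c\psi\perp c t\,(-\psi)$ (using $\Pfister t\otimes\xi\cong\xi\perp -t\xi\cong\xi\perp t(-\xi)$), so by uniqueness of the unimodular/non-unimodular splitting of a form over $K$ — the Springer-type decomposition used throughout this section — one reads off, according to the parity of $v(c)$, either $\varphi\cong u\psi$ and $-\varphi\cong -u\psi$, or the cross-relations $\varphi\cong u(-\psi)$ and $-\varphi\cong u\psi$; in either case $\varphi\cong\pm u\,\psi$ over $R$, and passing to residues gives $\overline\varphi\cong\pm\overline u\,\overline\psi$, a similarity over $F$.

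The step I expect to be the main obstacle is precisely this descent of the scalar: one must argue carefully that an isometry over $K$ between two forms in ``Springer normal form'' forces, via uniqueness of the first and second residue forms (and the $2$-Henselian hypothesis, which makes a unit scalar with square residue into a genuine square in $R$), a corresponding isometry over $R$ up to a unit scalar, and then that the residue map preserves this. One also has to make sure that in the $\Sim(n-1)$ argument the dimension bound $2^{n}+2^{n-1}$ is genuinely met (it is, with equality possible), so that $\Sim(n)$ for $K$ is applicable; and one should note the degenerate small cases (e.g.\ $\overline\psi$ or $\overline\varphi$ zero, or of dimension below $2^{n-1}$), where the statement is either vacuous or follows from the Arason--Pfister Hauptsatz \ref{thm:APH}.
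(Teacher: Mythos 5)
Your proposal is correct and follows essentially the same route as the paper: lift the forms and the congruence to $K$ using the 2-Henselian splitting of $\I^n$, apply $\Sim(n)$ over $K$ directly for the unimodular case, and for $\Sim(n-1)$ multiply the lifts by a binary form in $t$ (the paper uses $\varphi\perp t\varphi$ rather than $\Pfister t\otimes\varphi$, an immaterial difference) before descending via the residue forms. Your more careful discussion of the parity of $v(c)$ in the descent is exactly the point the paper leaves implicit.
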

\begin{proof}
    Let $\overline\varphi,\overline\psi$ be anisotropic forms over $F$ of dimension $2^n+2^{n-1}$ such that $[\overline\varphi],[\overline\psi]\in\I^n(F)$ and $[\overline\varphi]\equiv[\overline\psi]\mod\I^{n+1}(F)
    $ and let $\varphi,\psi$ denote their respective $\I^n(K)$-lifts.
    Since the difference $\overline\varphi\perp-\overline\psi$ can be lifted uniquely to an $\I^{n+1}(K)$ form, we have $[\varphi]\equiv[\psi]\mod\I^{n+1}(K)$.
    Thus, by assumption, $\varphi,\psi$ are similar over $K$ and since $\varphi,\psi$ are both unimodular, $\overline\varphi,\overline\psi$ are clearly similar over $F$.

    Let now $\overline\varphi,\overline\psi$ be anisotropic forms of dimension $2^{n-1}+2^{n-2}$ over $F$ such that $[\overline\varphi],[\overline\psi]\in\I^{n-1}(F)$ and $[\overline\varphi]\equiv[\overline\psi]\mod\I^{n}(F)$.
    Again, we denote by $\varphi,\psi$ their $\I^{n-1}(K)$-lifts.
    We consider $\alpha=\varphi\perp t\varphi$ and $\beta=\psi\perp t\psi$.
    We clearly have $[\alpha], [\beta]\in\I^n(K)$ and 
    \[[\alpha]-[\beta]=[\varphi\perp-\psi]+ t[\varphi\perp-\psi].\]
    As $[\overline\varphi\perp-\overline\psi]\in\I^n(F)$ we have $[\varphi\perp-\psi]\in \I^n(K)$ and it follows that $[\alpha]\equiv[\beta]\mod\I^{n+1}(K)$.
    By assumption on $K$, we conclude that $\alpha$ and $\beta$ are similar and by considering the residue class forms, we finally obtain that $\varphi$ and $\psi$ are similar.
\end{proof}

\section{Going up Theorems}

As above, let $R$ be a discrete valuation ring with residue field $F$, quotient field $K$ and uniformizer $t$.
We further assume $R$ to be 2-Henselian. 
Thus \Cref{lem:InValuationSequence} \ref{lem:InValuationSequence2} will play a crucial role in that we regularly use that $\I^n(F)$-forms correspond to unimodular $\I^n(K)$-forms via lifting respectively taking the first residue form.

\begin{thm}\label{thm:GAUp}
	Let $R$ be 2-Henselian.
    Let $n\geq3$ be an integer and $d=2^n+2^{n-1}$.
    If we have $\GA_n(F,d)=\I^n(F, d)$ and $\GA_{n-1}(F, \frac d2)=\I^{n-1}(F,\frac d2)$, then $\GA_n(K, d)=\I^n(K, d)$.
\end{thm}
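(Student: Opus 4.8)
The plan is to pull the problem down to the residue field $F$ via a Springer decomposition, apply the two hypotheses (together with \Cref{thm:GA->Sim} at level $n-1$) over $F$, and lift the conclusion back to $K$. So let $\varphi$ be anisotropic over $K$ with $[\varphi]\in\I^n(K)$ and $\dim(\varphi)=d=2^n+2^{n-1}$; by \Cref{prop:GAclassification} it suffices to prove $\GP_n(\varphi)=2$. Write $\varphi\cong\varphi_1\perp t\varphi_2$ with $\varphi_1,\varphi_2$ unimodular over $R$. Since $R$ is $2$-Henselian, \Cref{AnisotropCDV} makes $\overline{\varphi_1},\overline{\varphi_2}$ anisotropic (when nonzero), and \Cref{lem:InValuationSequence} \ref{lem:InValuationSequence1} yields $[\overline{\varphi_1}],[\overline{\varphi_2}]\in\I^{n-1}(F)$ together with the congruence $[\overline{\varphi_1}]\equiv[\overline{\varphi_2}]\bmod\I^{n}(F)$. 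Set $d_i=\dim(\overline{\varphi_i})$, so $d_1+d_2=d$.

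The first step is a dimension count that makes the case analysis finite. If both $d_i>0$ then both are $\geq 2^{n-1}$ by the Arason--Pfister Hauptsatz \Cref{thm:APH}, hence also $\leq 2^n$; and if in addition neither equals $2^{n-1}$, then $2^{n-1}<d_i<2^n$, so the Gap \Cref{thm:Gaps} applied to $\I^{n-1}$ forces $d_i=2^n-2^{k_i}$ with $1\leq k_i\leq n-2$, whence $2^{k_1}+2^{k_2}=2^{n-1}$ and therefore $k_1=k_2=n-2$. Thus only three configurations of $(d_1,d_2)$ can occur: (a) some $d_i=0$; (b) $\{d_1,d_2\}=\{2^{n-1},2^n\}$; (c) $d_1=d_2=\frac d2=2^{n-1}+2^{n-2}$.

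In case (a), $\varphi$ is similar to one of the unimodular forms $\varphi_1,\varphi_2$, whose residue is anisotropic of dimension $d$; using \Cref{lem:InValuationSequence} \ref{lem:InValuationSequence2} one sees this residue lies in $\I^n(F,d)=\GA_n(F,d)$, so its $n$-Pfister number is $2$, and by scaling-invariance of $\GP_n$ together with \Cref{cor:LiftPfisterRepresentation} we get $\GP_n(\varphi)=2$. In case (b), say $d_1=2^{n-1}$; then $\overline{\varphi_1}\in\GP_{n-1}(F)$ by the equality clause of \Cref{thm:APH}, so its unimodular lift $\varphi_1$ lies in $\GP_{n-1}(K)$ and is a subform of $\varphi$, whence $\varphi\in\GA_n(K)$ by \Cref{prop:GAclassification} (this case uses neither hypothesis). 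Case (c) is the heart of the matter: here $\overline{\varphi_1},\overline{\varphi_2}$ are anisotropic of dimension $2^{n-1}+2^{n-2}$ in $\I^{n-1}(F)$, hence generalised Albert forms of degree $n-1$ by the hypothesis $\GA_{n-1}(F,\frac d2)=\I^{n-1}(F,\frac d2)$, and they are congruent modulo $\I^n(F)$; \Cref{thm:GA->Sim} applied with $n-1$ in place of $n$ (legitimate as $n-1\geq 2$) then gives $\overline{\varphi_2}\cong c\,\overline{\varphi_1}$ for some unit $c$ of $R$. Lifting this isometry, $\varphi_2\cong c\,\varphi_1$, so
\[\varphi\cong\varphi_1\perp tc\,\varphi_1\cong\Pfister{-tc}\otimes\varphi_1.\]
Since $[\varphi_1]\in\I^{n-1}(K)$, \Cref{prop:PfisterNumberMult} gives $\GP_n(\varphi)\leq\GP_{n-1}(\varphi_1)=\GP_{n-1}(\overline{\varphi_1})=2$, using \Cref{cor:LiftPfisterRepresentation} and that $\overline{\varphi_1}$ is a generalised Albert form of degree $n-1$; and $\GP_n(\varphi)\geq 2$ since an anisotropic form of dimension $>2^n$ in $\I^n(K)$ is neither hyperbolic nor Witt equivalent to a single $n$-fold Pfister form. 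Hence $\GP_n(\varphi)=2$ and $\varphi\in\GA_n(K)$.

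I expect case (c) to be the main obstacle: the key realisation is that \Cref{thm:GA->Sim} at level $n-1$ is exactly the device that upgrades the cheap congruence $[\overline{\varphi_1}]\equiv[\overline{\varphi_2}]\bmod\I^n(F)$, which comes for free from the valuation set-up, to an honest similarity, and this is what collapses the first-residue part into the single Pfister multiple $\Pfister{-tc}\otimes\varphi_1$ whose Pfister number is then governed by \Cref{prop:PfisterNumberMult}. The finiteness of the case distinction rests on combining the Arason--Pfister Hauptsatz with the Gap Theorem. The remaining effort is bookkeeping around the (standard, but to be cited with care) lifting properties of unimodular forms over a $2$-Henselian discrete valuation ring -- existence and uniqueness of lifts of forms, isometries, subforms, and of membership in $\GP_m(F)$ -- all resting on the reduction isomorphism $\W(R)\cong\W(F)$ and \Cref{lem:InValuationSequence}.
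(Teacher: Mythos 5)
Your proposal is correct and follows essentially the same route as the paper: Springer decomposition $\varphi\cong\varphi_1\perp t\varphi_2$, the Gap Theorem to reduce to the three residue-dimension configurations, the hypothesis $\GA_n(F,d)=\I^n(F,d)$ for the unimodular case, the Pfister-subform criterion of \Cref{prop:GAclassification} when one residue has dimension $2^{n-1}$, and the hypothesis at level $n-1$ combined with \Cref{thm:GA->Sim} to turn the congruence of residues into a similarity yielding $\varphi\cong\Pfister{-tc}\otimes\varphi_1$ in the balanced case. The only cosmetic difference is that you spell out the dimension count explicitly where the paper normalises $\dim\varphi_2\leq\dim\varphi_1$ and invokes the Gap Theorem directly.
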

\begin{proof}
	Let $\varphi$ be a quadratic form of dimension $2^n+2^{n-1}$ with $[\varphi]\in \I^n(K)$. 
    If $\varphi$ is similar to a unimodular form $\psi$, $\overline\psi$ is a form in $\I^n(F)$ of the same dimension according to \Cref{AnisotropCDV} and \Cref{lem:InValuationSequence}. 
    Since we have $\GA_n(F, d)=\I^n(F, d)$ and since we can lift a representation of $[\overline\psi]$ as a sum of the classes of two $n$-fold Pfister forms, this case is clear.
    
	Otherwise we can write $\varphi\cong\varphi_1\perp t\varphi_2$, with unimodular forms $\varphi_1,\varphi_2$ that fulfill $[\varphi_1],[\varphi_2]\in\I^{n-1}(K)$ due to \Cref{lem:InValuationSequence} \ref{lem:InValuationSequence1}. 
    After multipliying with $t$, we can assume the inequalities $0<\dim(\varphi_2)\leq\dim(\varphi_1)$. 
    As we have $[\overline{\varphi_1}],[\overline{\varphi_2}]\in \I^{n-1}(F)$ by \Cref{lem:InValuationSequence}, we have 
    \[\dim(\varphi_2)=\dim(\overline{\varphi_2})\in\{2^{n-1}, 2^{n-1}+2^{n-2}\}\] according to the Gap \Cref{thm:Gaps}. 
    If ${\dim(\varphi_2)=2^{n-1}}$, \Cref{prop:GAclassification} \ref{prop:GAclassification4} is fulfilled. 
    
	Otherwise we have $\dim(\varphi_1)=\dim(\varphi_2)=2^{n-1}+2^{n-2}$. 
    By \Cref{lem:InValuationSequence}, we have $[\overline{\varphi_1}]\equiv[\overline{\varphi_2}]\mod\I^n(F)$.
    Using the equality $\GA_{n-1}(F, \frac d2)=\I^{n-1}(F,\frac d2)$, \Cref{thm:GA->Sim} implies that $\overline{\varphi_1}$ and $\overline{\varphi_2}$ are similar.
    Therefore also $\varphi_1$ and $\varphi_2$ are similar and we obtain the existence of some $x\in K^\ast$ with $\varphi_2\cong -x\varphi_1$.
    This now implies
	\[\varphi \cong\varphi_1\perp t\varphi_2\cong \varphi_1\perp -tx\varphi_2\cong \Pfister{xt}\otimes\varphi_1\]
    and therefore, \Cref{prop:GAclassification} \ref{prop:GAclassification1} is fulfilled because of \Cref{prop:PfisterNumberMult}.
\end{proof}

\begin{rem}\label{rem:NoHigherDimension}
    It is not possible to establish results analogous to \Cref{thm:GAUp} for generalised Albert forms of higher dimension.
    We know from \cite[Example 5.3]{HoffmannTignol} that for $F=\C(X, Y)\laurent s$, every 14-dimensional form with Witt class in $\I^3(F)$ is a generalised Albert form, but over $K=F\laurent t$, there are 14-dimensional forms in $\I^3(F\laurent t)$ that are not generalised Albert forms.
    
    Expressed in our notation from above, we have $\GA_2(F, d)=\I^2(F,d)$ for all $d<8$ and $\GA_3(F, d)=\I^3(F,d)$ for all $d<16$, but $\GA_3(K, 14)\subsetneq\I^3(K, 14)$.
\end{rem}

\begin{cor}\label{cor:LinkedLaurentGA} 
	Let $E$ be a linked field, $I$ be a totally ordered index set and $F=E\laurent{t_i}_{i\in I}$ be an iterated Laurent extension of $E$. 
    Let $n\geq2$ be an integer and $d=2^n+2^{n-1}$.
    We then have $\GA_n(F,d)=\I^n(F,d)$ and $F$ fulfills $\Sim(n)$.
\end{cor}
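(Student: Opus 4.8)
The plan is to induct on the number of Laurent variables, applying the going-up theorem \Cref{thm:GAUp} at each step, with the base case $F=E$ handled by a linkage argument; write $d=2^n+2^{n-1}=3\cdot 2^{n-1}$ throughout. For the base case I would first observe that a linked, i.e.\ $2$-linked, field $E$ is $n$-linked for every $n\geq2$, arguing by induction on $n$ via \Cref{SimpleDec}: given $\sigma,\tau\in\P_n(E)$, drop a slot to write $\sigma\cong\Pfister a\otimes\sigma'$ and $\tau\cong\Pfister b\otimes\tau'$ with $\sigma',\tau'\in\P_{n-1}(E)$; by $(n-1)$-linkedness there is $\rho\in\P_{n-2}(E)$ with $\sigma'\cong\Pfister c\otimes\rho$ and $\tau'\cong\Pfister d\otimes\rho$, hence $\sigma\cong\Pfister{a,c}\otimes\rho$ and $\tau\cong\Pfister{b,d}\otimes\rho$; since $E$ is $2$-linked the $2$-fold Pfister forms $\Pfister{a,c}$ and $\Pfister{b,d}$ share a slot, $\Pfister{a,c}\cong\Pfister{e,f}$ and $\Pfister{b,d}\cong\Pfister{e,g}$, so that $\Pfister e\otimes\rho\in\P_{n-1}(E)$ divides both $\sigma$ and $\tau$. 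Then \Cref{cor:nlinkedPfisterNumber} forces every anisotropic form in $\I^n(E)$ to have dimension divisible by $2^n$; as $d$ is not divisible by $2^n$, there is no such form, so $\I^n(E,d)=\emptyset=\GA_n(E,d)$ for all $n\geq2$.

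For the inductive step, let $F=E'\laurent t$, where $E'$ is an iterated Laurent extension of $E$ with one variable fewer, and assume as induction hypothesis that $\GA_m(E',2^m+2^{m-1})=\I^m(E',2^m+2^{m-1})$ for all $m\geq2$. The ring $E'[[t]]$ is complete, hence $2$-Henselian, with residue field $E'$ and quotient field $F$. For $n=2$ the equality $\GA_2(F,6)=\I^2(F,6)$ holds over every field, and for $n\geq3$ I would apply \Cref{thm:GAUp} with residue field $E'$ and quotient field $F$: its hypotheses $\GA_n(E',d)=\I^n(E',d)$ and $\GA_{n-1}(E',\frac d2)=\I^{n-1}(E',\frac d2)$ (note $n-1\geq2$ and $\frac d2=2^{n-1}+2^{n-2}$) are exactly the induction hypothesis for $m=n$ and $m=n-1$, so its conclusion $\GA_n(F,d)=\I^n(F,d)$ follows. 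Having established $\GA_n(F,d)=\I^n(F,d)$ for every $n\geq2$, the field $F$ satisfies $\Sim(n)$ for all $n\geq2$ by the consequence of \Cref{thm:GA->Sim} recorded right after the definition of $\Sim(n)$.

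For an arbitrary totally ordered index set $I$ I would reduce to the finite case just treated: any anisotropic form of dimension $d$ over $F$, together with a witnessing decomposition of its Witt class as a sum of two forms in $\GP_n$, is defined over $E\laurent{t_j}_{j\in J}$ for some finite $J\subseteq I$, and (enlarging $J$ if necessary) its Witt class lies in $\I^n$ of that sub-extension, since $\I^n$ commutes with the filtered union of the Witt rings $\W\!\big(E\laurent{t_j}_{j\in J}\big)$; so the finite case yields the general one. I do not expect a serious difficulty here: the genuinely new ingredient is the base case, and the only point to watch there is the reduction to $n$-linkedness — the observation that over a linked field there is no anisotropic $\I^n$-form of dimension $2^n+2^{n-1}$ — while everything else is an assembly of \Cref{thm:GAUp}, \Cref{cor:nlinkedPfisterNumber} and \Cref{thm:GA->Sim}.
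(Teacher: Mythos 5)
Your proof is correct and follows essentially the same route as the paper: the base case is that a linked field, being $n$-linked for all $n$, admits no anisotropic $\I^n$-form of dimension $2^n+2^{n-1}$ by \Cref{cor:nlinkedPfisterNumber}, and the induction over the Laurent variables is exactly an iterated application of \Cref{thm:GAUp}, with $\Sim(n)$ then following from the corollary to \Cref{thm:GA->Sim}. You merely spell out two points the paper leaves implicit, namely the slot-sharing argument that $2$-linked implies $n$-linked and the reduction of an infinite index set to a finite sub-extension.
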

\begin{proof}
	For $n\in\{2,3\}$, there is nothing more to show.
	With \Cref{thm:GAUp} in mind, it is enough to verify the equality $\GA_n(F, d)=\I^n(F, d)$ for all $n\in\N$ with $n\geq 3$ for linked fields. 
    But this is trivially fulfilled as over these fields, there are no anisotropic forms of dimension $2^n+2^{n-1}$ with Witt class in $\I^n(F)$, see \Cref{cor:nlinkedPfisterNumber} 
\end{proof}

The same proof as in \Cref{cor:LinkedLaurentGA} clearly also yields that iterated Laurent extensions $F$ of $\mathbb F_3, \R, \C$ fulfil $\GA_n(F, d)=\I^n(F, d)$ and have property $\Sim(n)$ for all $n\geq2$ and $d=2^n+2^{n-1}$.

The occuring Witt rings are then group ring extensions of $\Z/4\Z, \Z, \Z/2\Z$ respectively with a group of exponent 2. 
Fields with Witt ring isomorphic to such a group ring extension are characterized by the fact that each anisotropic form of dimension 2 represents at most 2 square classes, see \cite[Theorem 1.9]{Ware}.
Such fields are thus called \emph{rigid}.
The author studied rigid fields in \cite{MR4554566} with respect to their Pfister numbers.

By \cite[Theorem 2.3]{Cordes1973} fields with isomorphic Witt rings are even \emph{equivalent with respect to quadratic forms} (or for short \emph{$q$-equivalent}, see \cite[Chapter XII. Definition 1.1]{Lam2005}) in the sense of the just cited article.
Such a $q$-equivalence between two fields $F, K$ yields a correspondence between isometry types of quadratic forms over $F$ resp. $K$ that preserves orthogonal sums, tensor products, dimensions, anisotropicity and Witt indices and thus also an isomorphism between $\W(F)$ and $\W(K)$ that maps classes of 1-dimensional forms to classes of 1-dimensional forms.

From this, it can be seen that, given that two fields $F_1$ and $F_2$ are $q$-equivalent, for all $n, d$ we have $\GA_n(F_1,d)=\I^n(F_1, d)$ if and only if $\GA_n(F_2, d)=\I^n(F_2, d)$ and further, $F_1$ fulfills $\Sim(n)$ if and only if $F_2$ does.

Since rigid fields are $q$-equivalent to an iterated Laurent extension of $\mathbb F_3, \R$ or $\C$ as mentioned above, we particularly have the following result.

\begin{cor}\label{cor:RigidGA}
    Let $F$ be a rigid field, $n\geq2$ be an integer and $d=2^n+2^{n-1}$. 
    We then have $\GA_n(F, d)=\I^n(F, d)$ and $F$ fulfills $\Sim(n)$ for all $n\geq2$.
\end{cor}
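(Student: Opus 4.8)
The plan is to reduce the statement to the already-established results about iterated Laurent extensions, namely the sentence following \Cref{cor:LinkedLaurentGA}, together with the $q$-equivalence invariance of the properties in question that was discussed just before the statement. First I would recall the structural classification of rigid fields: by \cite[Theorem 1.9]{Ware}, a field $F$ is rigid precisely when $\W(F)$ is isomorphic to a group ring extension of $\Z/4\Z$, $\Z$, or $\Z/2\Z$ by an elementary abelian $2$-group. In each of the three cases, one can realise the same group ring extension as the Witt ring of a suitable iterated Laurent extension of $\mathbb F_3$, $\R$, or $\C$ respectively: starting from $E\in\{\mathbb F_3,\R,\C\}$ one has $\W(E)\cong\Z/4\Z,\Z,\Z/2\Z$, and passing to $E\laurent{t_i}_{i\in I}$ multiplies the Witt ring by a copy of $\Z/2\Z[\Z/2\Z]$ for each index (by the standard Laurent-series computation, cf.\ \Cref{lem:InValuationSequence}), so that for a large enough index set $I$ the Witt ring of the Laurent extension matches that of $F$.

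The second step is to invoke \cite[Theorem 2.3]{Cordes1973}, which upgrades an isomorphism of Witt rings sending one-dimensional classes to one-dimensional classes into a $q$-equivalence between the two fields. As was spelled out in the paragraphs preceding the statement, a $q$-equivalence between fields $F_1$ and $F_2$ induces a dimension-, isometry-, and Witt-index-preserving bijection on quadratic forms compatible with $\perp$ and $\otimes$, hence preserves membership in $\GP_n$, in $\I^n$, the anisotropic dimension, and similarity; consequently $\GA_n(F_1,d)=\I^n(F_1,d)$ holds iff the corresponding equality holds for $F_2$, and $F_1$ has $\Sim(n)$ iff $F_2$ does. Applying this with $F_1=F$ the given rigid field and $F_2$ the matching iterated Laurent extension of $\mathbb F_3$, $\R$, or $\C$, the desired conclusions for $F$ follow from the already-noted fact that all such Laurent extensions satisfy $\GA_n(F,d)=\I^n(F,d)$ and $\Sim(n)$ for every $n\ge 2$ and $d=2^n+2^{n-1}$ (which in turn rests on \Cref{cor:LinkedLaurentGA} and \Cref{thm:GAUp}, the linked base cases being handled by \Cref{cor:nlinkedPfisterNumber} since $\mathbb F_3$, $\R$, $\C$ are linked).

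The only genuine content to check — and the step I would expect to be the main obstacle, though it is really more bookkeeping than difficulty — is the precise matching of group ring extensions: one must be careful that an arbitrary rigid Witt ring, i.e.\ an arbitrary group ring extension $\Lambda[G]$ with $\Lambda\in\{\Z/4\Z,\Z,\Z/2\Z\}$ and $G$ of exponent $2$, is realised by some iterated Laurent extension, which forces $G$ to have the right cardinality; for $G$ infinite one simply takes $|I|$ with $2^{|I|}=|G|$ and for $G$ finite of rank $r$ one takes $|I|=r$. Since the statement quantifies over all rigid fields, I would make sure the index set $I$ is chosen of the appropriate (possibly infinite) cardinality so that the Witt rings are genuinely isomorphic, and then the $q$-equivalence machinery does the rest with no further estimates needed.
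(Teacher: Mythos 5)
Your proposal is correct and follows essentially the same route as the paper: identifying the Witt ring of a rigid field (via Ware's theorem) with that of a suitable iterated Laurent extension of $\mathbb F_3$, $\R$ or $\C$, upgrading the Witt ring isomorphism to a $q$-equivalence via Cordes's theorem, and transferring the properties $\GA_n(F,d)=\I^n(F,d)$ and $\Sim(n)$ from the Laurent extensions, which are handled by \Cref{cor:LinkedLaurentGA} and the remark following it. Your added care about choosing the index set of the right cardinality is a detail the paper leaves implicit but does not change the argument.
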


We now turn to the property $\Sim(n)$.

\begin{thm}\label{thm:SimGoingUp}
	Let $R$ be 2-Henselian and let $n\geq 3$ be an integer. If $F$ fulfils $\Sim(n-1)$ and $\Sim(n)$ then $K$ fulfils $\Sim(n)$.
\end{thm}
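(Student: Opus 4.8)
The plan is to take two anisotropic forms $\varphi,\psi$ over $K$ of dimension $2^n+2^{n-1}$ with $[\varphi],[\psi]\in\I^n(K)$ and $[\varphi]\equiv[\psi]\mod\I^{n+1}(K)$, and to show they are similar by decomposing each into first and second residue forms and playing the hypotheses $\Sim(n-1)$ and $\Sim(n)$ for $F$ against the structural results on generalised Albert forms. As in the going-down argument, I would first dispose of the case where $\varphi$ (and hence, after rescaling, $\psi$) is similar to a unimodular form: then $\overline\varphi,\overline\psi$ are anisotropic forms over $F$ of dimension $2^n+2^{n-1}$ in $\I^n(F)$, they are still congruent modulo $\I^{n+1}(F)$ by \Cref{lem:InValuationSequence} \ref{lem:InValuationSequence1}, so $\Sim(n)$ for $F$ makes them similar over $F$, and similarity lifts back to $K$.

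In the general case I would write $\varphi\cong\varphi_1\perp t\varphi_2$ and $\psi\cong\psi_1\perp t\psi_2$ with $\varphi_i,\psi_i$ unimodular, and by \Cref{lem:InValuationSequence} \ref{lem:InValuationSequence1} each of $[\varphi_i],[\psi_i],[\overline{\varphi_i}],[\overline{\psi_i}]$ lies in $\I^{n-1}$ over the respective field. The total dimension $2^n+2^{n-1}$ forces (via the Gap \Cref{thm:Gaps} applied to the anisotropic residue forms) the dimension pair of $(\varphi_1,\varphi_2)$ to be either $(2^{n-1}+2^{n-2},2^{n-1}+2^{n-2})$ or $(2^n,2^{n-1})$ — up to swapping and rescaling by $t$. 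The congruence $[\varphi]\equiv[\psi]\mod\I^{n+1}(K)$, combined with the split exact sequence of \Cref{lem:InValuationSequence} \ref{lem:InValuationSequence2}, forces $\varphi$ and $\psi$ to have matching dimension types and forces congruences $[\overline{\varphi_j}]\equiv[\overline{\psi_j}]\mod\I^n(F)$ on the matching residue pieces. In the balanced case $(2^{n-1}+2^{n-2},2^{n-1}+2^{n-2})$: each $\varphi_i,\psi_i$ is a form of dimension $2^{n-1}+2^{n-2}$ in $\I^{n-1}$; using $\Sim(n-1)$ for $F$ together with the reasoning already present in the proof of \Cref{thm:GAUp} (similarity of residue forms $\Rightarrow$ $\varphi\cong\Pfister{xt}\otimes\varphi_1$, and likewise for $\psi$), one realizes $\varphi,\psi$ as scalar multiples of $n$-fold Pfister forms times unimodular forms, or more precisely identifies them with twisted Pfister forms, and concludes via \Cref{prop:TwistedPfisterHalfNeighborSimilar} or directly via $\Sim$ on the base. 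In the unbalanced case $(2^n,2^{n-1})$: the second residue piece has dimension $2^{n-1}$, so it is (a scalar multiple of) an $(n-1)$-fold Pfister form by the Arason-Pfister Hauptsatz \ref{thm:APH}, and the first residue piece is of dimension $2^n$ in $\I^{n-1}(F)$; then $\varphi$ is a twisted $(n,n-1)$-Pfister form, $\psi$ likewise, and \Cref{prop:TwistedPfisterHalfNeighborSimilar} finishes the argument once one checks the linkage-number condition on the residue forms from the congruence hypothesis.

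The main obstacle I expect is the bookkeeping that matches up the residue-form decompositions of $\varphi$ and $\psi$: a priori the two forms could split with different dimension types, and one has to use the $\I^{n+1}(K)$-congruence together with injectivity/splitting in \Cref{lem:InValuationSequence} \ref{lem:InValuationSequence2} to rule out mismatches and to produce the needed congruences $[\overline{\varphi_j}]\equiv[\overline{\psi_j}]\mod\I^n(F)$ piece by piece — paying attention to the ambiguity of which summand is "first" and the freedom to rescale by $t$. A secondary subtlety is verifying that in the unbalanced case the resulting data genuinely satisfies the definition of a twisted $(n,n-1)$-Pfister form (in particular the linkage number $n-2$ of the two Pfister pieces), which is exactly where \Cref{prop:GAclassification} and the linkage analysis of \Cref{thm:LinkTwoPfisterForms} enter; once that is in place, \Cref{prop:TwistedPfisterHalfNeighborSimilar} does the rest.
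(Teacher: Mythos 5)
Your overall strategy is the paper's: decompose into residue forms, dispose of the unimodular case via $\Sim(n)$ for $F$, split the remaining cases by the dimension of the second residue piece ($2^{n-1}$ or $2^{n-1}+2^{n-2}$), use $\Sim(n-1)$ on residue forms of dimension $2^{n-1}+2^{n-2}$, and invoke twisted Pfister forms together with \Cref{prop:TwistedPfisterHalfNeighborSimilar} in the unbalanced case. Two points, however, need repair. First, a dimensional slip: in the unbalanced case it is not $\varphi$ that is a twisted $(n,n-1)$-Pfister form (twisted Pfister forms have dimension $2^n$, while $\dim\varphi=2^n+2^{n-1}$); it is the first residue form $\overline{\varphi_1}$, of dimension $2^n$, that is twisted (via \cite[Proposition 3.11 (i)]{HoffmannTwistedPfister}, using that $[\overline{\varphi_1}]\equiv[\overline{\varphi_2}]\mod\I^n(F)$ with $\overline{\varphi_2}\in\GP_{n-1}(F)$). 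So \Cref{prop:TwistedPfisterHalfNeighborSimilar} can only be applied to the first residue pieces, not to $\varphi$ and $\psi$ directly, and hence it does not ``finish the argument'' on its own.

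This leads to the genuine gap: in both cases your plan stops once the first residue pieces are shown to be similar. Knowing $\overline{\psi_1}\cong\overline{x}\,\overline{\varphi_1}$ and, separately, $\overline{\psi_2}\cong\overline{y}\,\overline{\varphi_2}$ with possibly different scalars does not give $\psi\sim\varphi$; one must align the scalars. The paper does this by lifting $x$, computing $t([\varphi_2]-[x\psi_2])=[\varphi]-[x\psi]\in\I^{n+1}(K)$, and observing that $\varphi_2\perp-x\psi_2$ has anisotropic dimension at most $3\cdot2^{n-1}<2^{n+1}$, so the Arason--Pfister Hauptsatz \ref{thm:APH} forces it to be hyperbolic and the same $x$ works for both pieces. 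Without this step the argument is incomplete. (A smaller organizational remark: the matching of dimension types between $\varphi$ and $\psi$ is not forced by the congruence and the exact sequence alone, since congruence modulo $\I^n(F)$ does not determine dimension; it comes from applying $\Sim(n-1)$ to the second residue forms, which are congruent modulo $\I^n(F)$ and hence similar, so in particular of equal dimension --- this also justifies your parenthetical that $\psi$ is unimodular whenever $\varphi$ is.)
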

\begin{proof}
	Let $\varphi_1,\varphi_2$ be anisotropic forms of dimension $2^n+2^{n-1}$ with $[\varphi_1],[\varphi_2]\in\I^n(K)$ and $[\varphi_1]\equiv[\varphi_2]\mod\I^{n+1}(K)$.
    For $i\in\{1,2\}$, we can write $\varphi_i\cong\alpha_i\perp t\beta_i$ with unimodular forms $\alpha_i,\beta_i$.
    Possibly after scaling the $\varphi_i$ with $t$, we may further assume
    \[{2\cdot\dim(\beta_i)\leq\dim(\varphi_i)=2^n+2^{n-1}}.\]  
    By \Cref{lem:InValuationSequence} \ref{lem:InValuationSequence1}, we have $[\alpha_i],[\beta_i]\in\I^{n-1}(K)$ for $i\in\{1,2\}$.
	We further have
	\begin{align*}
		[(\alpha_1\perp-\alpha_2)\perp t(\beta_1\perp-\beta_2)]=[\varphi_1\perp-\varphi_2]\in \I^{n+1}(K),
	\end{align*}
	which leads to
	\begin{align*}
        [\alpha_1] \equiv [\alpha_2] \equiv [\beta_1] \equiv [\beta_2] \mod \I^{n}(K)
    \end{align*}
    and thus also to
    \begin{align}\label{eq:Kongruences}
        [\overline{\alpha_1}]\equiv[\overline{\alpha_2}] \equiv[\overline{\beta_1}] \equiv[\overline{\beta_2}]\mod \I^{n}(F).
    \end{align}

	As we assume $F$ to fulfil $\Sim(n-1)$, there is some $\overline y\in F^\ast$ with $\overline{\beta_2}\cong \overline y\overline{\beta_1}$. 
    In particular, this implies $\overline{\beta_1}=0$ if and only if $\overline{\beta_2}=0$, i.e. $\beta_1=0$ if and only if $\beta_2=0$. 
    So if $\beta_1=0$, both $\varphi_1$ and $\varphi_2$ are unimodular and the assertion now follows from $\Sim(n)$ for $F$ using that similarity is preserved under lifting.

	By the Gap \Cref{thm:Gaps}, the remaining cases are given by  \[\dim(\beta_1)=\dim(\beta_2)=2^{n-1}\text{ and }\dim(\beta_1)=\dim(\beta_2)=2^{n-1}+2^{n-2}.\] 
    We will now show the existence of some $x\in K^\ast$ with $\alpha_2\cong x\alpha_1$ for both cases separately.
    Note that since we consider a 2-Henselian valuation, it is enough to show the existence of some $\overline x\in F^\ast$ with $\overline{\alpha_2}\cong\overline x\overline{\alpha_1}$ since lifting then yields the assertion.
    
	In the case $\dim(\beta_1)=2^{n-1}+2^{n-2}$, we have $\dim(\overline{\alpha_1})=\dim(\overline{\alpha_2})=2^{n-1}+2^{n-2}$ and $[\overline{\alpha_1}] \equiv [\overline{\alpha_2}]\mod \I^n(F)$ by \eqref{eq:Kongruences}. 
    There is thus some $\overline x\in F^\ast$ with $\overline{\alpha_2}\cong \overline{x}\overline{\alpha_1}$ by $\Sim(n-1)$ for $F$.
    
	In the case where $\dim(\beta_1)=2^{n-1}$, we have ${\dim(\overline{\alpha_1}) = \dim(\overline{\alpha_2})=2^n}$.
    Since we have $[\overline{\alpha_i}] \equiv [\overline{\beta_i}]\mod\I^n(F)$ with $\overline{\beta_i} \in \GP_{n-1}(F)$, we can deduce by \cite[Proposition 3.11 (i)]{HoffmannTwistedPfister} that $\overline{\alpha_1}$ and $\overline{y}\overline{\alpha_2}$ are twisted Pfister forms.
    We further have
    \[[\overline{\alpha_1}]-[\overline{y}\overline{\alpha_2}]=[\overline{\varphi_1}]-[\overline{y}\overline{\varphi_2}] \in\I^{n+1}(F).\]
     
    Thus the existence of $\overline x\in F^\ast$ with $\overline{\alpha_2}\cong \overline{x}\overline{\alpha_1}$ follows from \Cref{prop:TwistedPfisterHalfNeighborSimilar}.

    As remarked before, we now obtain the existence of some $x\in K^\ast$ with $\alpha_2\cong x\alpha_1$ by lifting.
	In $\W(K)$ we then have
	\begin{align*}
		t([\beta_1]-[x\beta_2])&=[\varphi_1]-[x\varphi_2]\in\I^{n+1}(K)
	\end{align*}
	We thus have $[\beta_1\perp-x\beta_2]\in \I^{n+1}(K)$. 
    But as the anisotropic part of $\beta_1\perp-x\beta_2$ has dimension at most ${3\cdot2^{n-1}<2^{n+1}}$, it is hyperbolic by the Arason-Pfister Hauptsatz \ref{thm:APH}. 
    We thus have $\beta_1\cong x\beta_2$, which is equivalent to $\beta_2\cong x\beta_1$. 
    Summarizing, we have
	$$x\varphi_1\cong x\alpha_1\perp tx\beta_1\cong \alpha_2\perp t\beta_2\cong \varphi_2,$$
	as claimed.
\end{proof}

To summarise, we combine our main results \Cref{thm:GAdown}, \Cref{prop:SimGoDown}, \Cref{thm:GAUp}, \Cref{thm:SimGoingUp} to obtain the following handy equivalences:

\begin{cor}
    Let $R$ be a 2-Henselian valuation ring with residue field $F$ and quotient field $K$ both of characteristic not 2.
    Let further $n\geq2$ and $d=2^n+2^{n-1}$.
    \begin{enumerate}[label=(\alph*)]
        \item We have $\GA_n(K, d)=\I^n(K, d)$ if and only if $\GA_n(F, d)=\I^n(F, d)$ and $\GA_{n-1}(F, \frac d2)=\I^{n-1}(F, \frac d2)$.
        \item $K$ has propoerty $\Sim(n)$ if and only if $F$ has properties $\Sim(n)$ and $\Sim(n-1)$.
    \end{enumerate}
\end{cor}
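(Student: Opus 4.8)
The plan is to assemble the four going‑up and going‑down results of the preceding two sections, after first dispatching the degenerate value $n=2$ separately, since those results are only stated for $n\geq 3$.

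Suppose first $n=2$, so $d=6$. Then every statement occurring in (a) and (b) holds for an arbitrary field, so both equivalences hold simply because all the assertions on the two sides are true. Indeed, $\GA_2(E,6)=\I^2(E,6)$ for every field $E$, because any six‑dimensional form with Witt class in $\I^2(E)$ is an Albert form, hence a generalised Albert form; $\GA_1(E,3)=\I^1(E,3)$ holds because $\I(E)$ consists of even‑dimensional forms, so neither side contains an anisotropic form of dimension $3$ and both are empty; $\Sim(1)$ holds for the same reason; and $\Sim(2)$ is Jacobson's theorem on the similarity of Albert forms congruent modulo $\I^3$ (the only additional point to check is that a four‑dimensional and a six‑dimensional form in $\I^2(E)$ cannot be congruent modulo $\I^3(E)$, which follows from the Gap \Cref{thm:Gaps} together with the linkage restrictions of \Cref{thm:LinkTwoPfisterForms}, exactly as in the Remark after \Cref{prop:GAclassification}). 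Hence there is nothing to prove when $n=2$, and we may assume $n\geq 3$ from now on.

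For $n\geq 3$ the corollary is now a direct consequence of the four main results, all of whose hypotheses are satisfied: $R$ is $2$‑Henselian by assumption, so by \Cref{lem:InValuationSequence} \ref{lem:InValuationSequence2} every class in $\I^k(F)$ lifts uniquely to a unimodular class in $\I^k(K)$, which is precisely the standing assumption under which \Cref{thm:GAdown}, \Cref{prop:SimGoDown}, \Cref{thm:GAUp} and \Cref{thm:SimGoingUp} were established. For part (a), the implication from $K$ to $F$ is \Cref{thm:GAdown} and the implication from $F$ to $K$ is \Cref{thm:GAUp}. For part (b), the implication from $K$ to $F$ is \Cref{prop:SimGoDown} and the implication from $F$ to $K$ is \Cref{thm:SimGoingUp}.

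I do not expect any genuine obstacle here; the proof is pure assembly. The only points demanding care are the bookkeeping in the $n=2$ case above, and verifying that the $2$‑Henselian hypothesis — indispensable both for all four cited statements and for the unique‑lifting property \Cref{lem:InValuationSequence} \ref{lem:InValuationSequence2} on which they rest — is indeed among the hypotheses of the corollary, which it is.
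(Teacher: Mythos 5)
Your proof is correct and takes essentially the same approach as the paper, which offers no separate argument for this corollary beyond declaring it the combination of \Cref{thm:GAdown}, \Cref{prop:SimGoDown}, \Cref{thm:GAUp} and \Cref{thm:SimGoingUp}. Your separate treatment of $n=2$ is a sensible extra precaution (those four results are only stated for $n\geq 3$ while the corollary allows $n\geq 2$), and the facts you invoke there --- that every $6$-dimensional form in $\I^2$ is an Albert form, Jacobson's theorem for $\Sim(2)$, and the vacuity of the degree-$1$ statements --- are exactly the ones recorded earlier in the paper.
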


\subsubsection*{Declaration of interests}

The authors declare that they have no known competing financial interests or personal relationships that could have appeared to influence the work reported in this paper.

\bibliographystyle{alpha}
\bibliography{literatur}

\begin{thebibliography}{EKM08}

\bibitem[AP71]{ArasonPfister71}
J\'{o}n~Kristinn Arason and Albrecht Pfister.
\newblock Beweis des {K}rullschen {D}urchschnittsatzes f\"{u}r den {W}ittring.
\newblock {\em Invent. Math.}, 12:173--176, 1971.

\bibitem[BL24]{BecherLorenz}
Karim Becher and Nico Lorenz.
\newblock On generalised {A}lbert forms in abstract {W}itt rings.
\newblock to be published, 2024.

\bibitem[BRV10]{MR2630047}
Patrick Brosnan, Zinovy Reichstein, and Angelo Vistoli.
\newblock Essential dimension, spinor groups, and quadratic forms.
\newblock {\em Ann. of Math. (2)}, 171(1):533--544, 2010.

\bibitem[Cor73]{Cordes1973}
Craig~M. Cordes.
\newblock The {W}itt group and the equivalence of fields with respect to
  quadratic forms.
\newblock {\em J. Algebra}, 26:400--421, 1973.

\bibitem[EKM08]{ElmanKarpenkoMerkurjev2008}
Richard Elman, Nikita Karpenko, and Alexander Merkurjev.
\newblock {\em The algebraic and geometric theory of quadratic forms},
  volume~56 of {\em American Mathematical Society Colloquium Publications}.
\newblock American Mathematical Society, Providence, RI, 2008.

\bibitem[EL71]{MR0283004}
Richard Elman and T.-Y. Lam.
\newblock Pfister forms and {$K$}-theory of fields.
\newblock {\em Bull. Amer. Math. Soc.}, 77:971--974, 1971.

\bibitem[Hof95]{HoffmannSimpleDecomposition}
Detlev~W. Hoffmann.
\newblock A note on simple decomposition of quadratic forms over linked fields.
\newblock {\em J. Algebra}, 175(2):728--731, 1995.

\bibitem[Hof96]{HoffmannTwistedPfister}
Detlev~W. Hoffmann.
\newblock Twisted {P}fister forms.
\newblock {\em Doc. Math.}, 1:No. 03, 67--102, 1996.

\bibitem[Hof98a]{HoffmannI4}
Detlev~W. Hoffmann.
\newblock On the dimensions of anisotropic quadratic forms in {$I^4$}.
\newblock {\em Invent. Math.}, 131(1):185--198, 1998.

\bibitem[Hof98b]{HoffmannHalfNeighbor}
Detlev~W. Hoffmann.
\newblock Similarity of quadratic forms and half-neighbors.
\newblock {\em J. Algebra}, 204(1):255--280, 1998.

\bibitem[Hof99]{HoffmannSimilarity}
Detlev~W. Hoffmann.
\newblock On a conjecture of {I}zhboldin on similarity of quadratic forms.
\newblock {\em Doc. Math.}, 4:61--64, 1999.

\bibitem[HR98]{MR1603857}
J\"{u}rgen Hurrelbrink and Ulf Rehmann.
\newblock Splitting patterns and linear combinations of two {P}fister forms.
\newblock {\em J. Reine Angew. Math.}, 495:163--174, 1998.

\bibitem[HT98]{HoffmannTignol}
Detlev~W. Hoffmann and Jean-Pierre Tignol.
\newblock On {$14$}-dimensional quadratic forms in {$I^3$}, {$8$}-dimensional
  forms in {$I^2$}, and the common value property.
\newblock {\em Doc. Math.}, 3:189--214, 1998.

\bibitem[IK00]{IzhboldinKarpenko}
Oleg~T. Izhboldin and Nikita~A. Karpenko.
\newblock Some new examples in the theory of quadratic forms.
\newblock {\em Math. Z.}, 234(4):647--695, 2000.

\bibitem[Izh98]{MR1668530}
Oleg~T. Izhboldin.
\newblock Motivic equivalence of quadratic forms.
\newblock {\em Doc. Math.}, 3:341--351, 1998.

\bibitem[Jac83]{MR0700981}
N.~Jacobson.
\newblock Some applications of {J}ordan norms to involutorial simple
  associative algebras.
\newblock {\em Adv. in Math.}, 48(2):149--165, 1983.

\bibitem[Kar04a]{KarpenkoHoles}
N~Karpenko.
\newblock Holes in ${I}^n$.
\newblock {\em Ann. Scient. Ec. Norm. Sup.}, 4(6):973--1002, 2004.

\bibitem[Kar04b]{MR2058676}
Nikita~A. Karpenko.
\newblock Third proof of second gap in dimensions of quadratic forms from
  {$I^n$}.
\newblock In {\em Algebraic and arithmetic theory of quadratic forms}, volume
  344 of {\em Contemp. Math.}, pages 207--213. Amer. Math. Soc., Providence,
  RI, 2004.

\bibitem[Kar17]{Karpenko2017}
Nikita~A. Karpenko.
\newblock Around 16-dimensional quadratic forms in {$I^3_q$}.
\newblock {\em Math. Z.}, 285(1-2):433--444, 2017.

\bibitem[Lam05]{Lam2005}
T.~Y. Lam.
\newblock {\em Introduction to quadratic forms over fields}, volume~67 of {\em
  Graduate Studies in Mathematics}.
\newblock American Mathematical Society, Providence, RI, 2005.

\bibitem[Lor23]{MR4554566}
Nico Lorenz.
\newblock Pfister numbers over rigid fields.
\newblock {\em Comm. Algebra}, 51(3):1158--1178, 2023.

\bibitem[Mar80]{MR0674651}
Murray Marshall.
\newblock {\em Abstract {W}itt rings.}
\newblock Queen's University, Kingston, Ont.,,, 1980.

\bibitem[MS89]{MR0931742}
Pasquale Mammone and Daniel~B. Shapiro.
\newblock The {A}lbert quadratic form for an algebra of degree four.
\newblock {\em Proc. Amer. Math. Soc.}, 105(3):525--530, 1989.

\bibitem[Neu99]{MR1697859}
J\"{u}rgen Neukirch.
\newblock {\em Algebraic number theory}, volume 322 of {\em Grundlehren der
  mathematischen Wissenschaften [Fundamental Principles of Mathematical
  Sciences]}.
\newblock Springer-Verlag, Berlin, 1999.
\newblock Translated from the 1992 German original and with a note by Norbert
  Schappacher, With a foreword by G. Harder.

\bibitem[Pfi66]{Pfister}
Albrecht Pfister.
\newblock Quadratische {F}ormen in beliebigen {K}\"{o}rpern.
\newblock {\em Invent. Math.}, 1:116--132, 1966.

\bibitem[PST09]{ParimalaSureshTignol}
R.~Parimala, V.~Suresh, and J.-P. Tignol.
\newblock On the {P}fister number of quadratic forms.
\newblock In {\em Quadratic forms---algebra, arithmetic, and geometry}, volume
  493 of {\em Contemp. Math.}, pages 327--338. Amer. Math. Soc., Providence,
  RI, 2009.

\bibitem[Rac13]{Raczek2013}
M\'{e}lanie Raczek.
\newblock On the 3-{P}fister number of quadratic forms.
\newblock {\em Comm. Algebra}, 41(1):342--360, 2013.

\bibitem[Sch85]{Scharlau}
Winfried Scharlau.
\newblock {\em Quadratic and {H}ermitian forms}, volume 270 of {\em Grundlehren
  der Mathematischen Wissenschaften [Fundamental Principles of Mathematical
  Sciences]}.
\newblock Springer-Verlag, Berlin, 1985.

\bibitem[Vis00]{MR1788358}
A.~S. Vishik.
\newblock On the dimensions of quadratic forms.
\newblock {\em Dokl. Akad. Nauk}, 373(4):445--447, 2000.

\bibitem[War78]{Ware}
Roger Ware.
\newblock When are {W}itt rings group rings? {II}.
\newblock {\em Pacific J. Math.}, 76(2):541--564, 1978.

\end{thebibliography}

\end{document}